\theoremstyle{plain}
\newtheorem{theorem}{Theorem}[section]
\newtheorem{proposition}[theorem]{Proposition}
\newtheorem{corollary}[theorem]{Corollary}
\newtheorem{lemma}[theorem]{Lemma}
\theoremstyle{definition}
\newtheorem{definition}[theorem]{Definition}
\newtheorem{example}[theorem]{Example}
\newtheorem{remark}[theorem]{Remark}
\theoremstyle{remark}
\numberwithin{equation}{section}
\newcommand{\N}{\mathbb N}
\newcommand{\Z}{\mathbb Z}
\newcommand{\R}{\mathbb R}
\newcommand{\C}{\mathbb C}
\newcommand{\Hy}{\mathbb H}
\newcommand{\GL}{\operatorname{GL}}
\newcommand{\Ot}{\operatorname{O}}
\newcommand{\SO}{\operatorname{SO}}
\newcommand{\SU}{\operatorname{SU}}
\newcommand{\U}{\operatorname{U}}
\newcommand{\Sp}{\operatorname{Sp}}
\newcommand{\Spin}{\operatorname{Spin}}
\newcommand{\so}{\mathfrak{so}}
\newcommand{\op}{\operatorname}
\newcommand{\Ad}{\operatorname{Ad}}
\newcommand{\spec}{\operatorname{Spec}}
\newcommand{\Hom}{\operatorname{Hom}}
\newcommand{\tr}{\operatorname{tr}}
\newcommand{\diag}{\operatorname{diag}}
\newcommand{\ba}{\backslash}
\newcommand{\mult}{\operatorname{mult}}
\newcommand{\norma}[1]{\|{#1}\|_1}
\newcommand{\Cas}{C}
\newcommand{\String}[1]{\mathcal{S}(#1)} 
\newcommand{\NN}{{|\Phi^+|}}
\newcommand{\PP}{\mathcal{P}}
\newcommand{\TT}{\mathcal{T}}
\newcommand{\dir}{\omega}
\newcommand{\pr}{\op{pr}}
\newcommand{\ee}{\varepsilon}
\title[Strong representation equivalence]
{Strong representation equivalence for compact symmetric spaces of real rank one}
\author{Emilio A. Lauret \and Roberto J. Miatello}
\address{Instituto de Matemática (INMABB), Departamento de Matemática, Universidad Nacional del Sur (UNS)-CONICET, Bahía Blanca, Argentina.}
\email{emilio.lauret@uns.edu.ar}
\address{CIEM--FaMAF (CONICET), Universidad Nacional de C\'ordoba, Medina Allende s/n, Ciudad Universitaria, 5000 C\'ordoba, Argentina.}
\email{miatello@famaf.unc.edu.ar}
\subjclass[2010]{58J50, 58J53, 22E46, 22C05.}
\keywords{Representation equivalent, isospectral, $\tau$-spectrum}
\thanks{This research was supported by grants from CONICET, FONCyT and SeCyT. The first named author was supported by the Alexander von Humboldt Foundation}
\date{June 2021}
\begin{document}

\begin{abstract}
Let $G/K$ be a simply connected compact irreducible symmetric space of real rank one. 
For each $K$-type $\tau$ we compare the notions of $\tau$-representation equivalence with $\tau$-isospectrality. 
We exhibit infinitely many $K$-types $\tau$ so that, for arbitrary discrete subgroups $\Gamma$ and $\Gamma'$ of $G$, 
if the multiplicities of $\lambda$ in the spectra of the Laplace operators acting on sections of the induced $\tau$-vector bundles over $\Gamma\backslash G/K$ and $\Gamma'\backslash G/K$ agree for all but finitely many $\lambda$, then $\Gamma$ and $\Gamma'$ are  $\tau$-representation equivalent in $G$ 
(i.e.\ $\dim \Hom_G(V_\pi, L^2(\Gamma\backslash G))=\dim \Hom_G(V_\pi, L^2(\Gamma'\backslash G))$ for all $\pi\in \widehat G$ satisfying $\Hom_K(V_\tau,V_\pi)\neq0$). 
In particular $\Gamma\backslash G/K$ and $\Gamma'\backslash G/K$ are $\tau$-isospectral 
(i.e.\ the multiplicities agree for all $\lambda$). 

We specially study the case of $p$-form representations, i.e. the irreducible subrepresentations $\tau$ of the representation $\tau_p$ of $K$ on the $p$-exterior power of the complexified cotangent bundle $\bigwedge^p T_\C^*M$. 
We show that for such $\tau$, in most cases $\tau$-isospectrality implies $\tau$-representation equivalence. 
We construct an explicit counter-example for $G/K= \operatorname{SO}(4n)/ \operatorname{SO}(4n-1)\simeq S^{4n-1}$. 
\end{abstract}

\maketitle

\tableofcontents

\section{Introduction} \label{sec:intro}
Let $M=G/K$ be a normal homogeneous space, that is, $G$ is a Lie group, $K$ is a compact subgroup of $G$, and $G/K$ has a  $G$-invariant Riemannian metric induced by an $\Ad(G)$-invariant inner product $\langle\cdot,\cdot\rangle$ on  $\mathfrak g$, the Lie algebra of $G$.
Given a finite dimensional representation $(\tau,W_\tau)$ of $K$  one can form the associated  hermitian $G$-homogeneous vector bundle $E_\tau$ on $M$ and  there is a distinguished self-adjoint, second order, elliptic differential operator $\Delta_\tau$ acting on smooth sections of $E_\tau$,  defined by the  Casimir element $C$ associated to $\langle\cdot,\cdot\rangle$.
When $\tau=1_K$, the trivial representation of $K$, $E_\tau$ is the trivial bundle $M\times \C$, smooth sections of $E_\tau$ are in correspondence with complex-valued smooth functions on $G/K$, and $\Delta_{\tau}$ coincides with the Laplace--Beltrami operator on $M$.

Given a discrete cocompact subgroup $\Gamma$ of $G$, the quotient $\Gamma\ba M$ is a compact good orbifold, with a manifold structure in case $\Gamma$ acts freely on $M$ and 
then $\Gamma \backslash M$ inherits a Riemannian metric on $\Gamma\ba M$ from the one on $M$. 
Furthermore, $E_\tau$ naturally induces a vector bundle $E_{\tau,\Gamma}$ on $\Gamma\ba M$, 
whose sections are identified with $\Gamma$-invariant sections of $E_\tau$. 
Thus, the differential operator $\Delta_{\tau,\Gamma}$ given by the restriction of $\Delta_\tau$ to the space of $\Gamma$-invariant smooth sections of $E_{\tau}$, is a self-adjoint, second order, elliptic operator acting on sections of $E_{\tau,\Gamma}$. 
Since $\Gamma\ba M$ is compact, the spectrum of $\Gamma\ba M$ is discrete, non-negative, and every eigenvalue has finite multiplicity.

The spectrum of $\Delta_{\tau,\Gamma}$, that we call the \emph{$\tau$-spectrum} of $\Gamma\ba M$, can be expressed in Lie theoretical terms. 
More precisely, the multiplicity $\mult_{\Delta_{\tau,\Gamma}}(\lambda)$ of a non-negative real number $\lambda$ in $\spec(\Delta_{\tau,\Gamma})$ is given by 
\begin{equation}\label{eq:taumultip}
\mult_{\Delta_{\tau,\Gamma}}(\lambda) = \sum_{(\pi,V_\pi) \in\widehat G: \; \lambda(C,\pi)=\lambda} n_\Gamma(\pi)\, \dim\Hom_K(W_\tau,V_\pi),
\end{equation}
where $\widehat G$ stands for the unitary dual of $G$, $\lambda(C, \pi)$ is the eigenvalue of $\pi(C)$  on $V_\pi$  ($\pi(C)$ acts as a multiple of the identity map on $V_\pi$ since $C$ is in the center of the universal enveloping algebra), and $n_\Gamma(\pi) =\dim\Hom_G(V_\pi, L^2(\Gamma\ba G))\in \N_0:=\N\cup\{0\}$ (i.e.\ the multiplicity of $\pi$ appears in the right regular representation $L^2(\Gamma\ba G)$ of $G$).
Note that the sum in \eqref{eq:taumultip} is indeed over the set $\widehat G_\tau$ of \emph{$\tau$-spherical representations} of $G$, that is, those $\pi\in\widehat G$ satisfying $\Hom_K(W_\tau,V_\pi)\neq 0$. 

It follows immediately from \eqref{eq:taumultip} that $\Gamma\ba M$ and $\Gamma'\ba M$ are \emph{$\tau$-isospectral} (i.e.\ $\Delta_{\tau,\Gamma}$ and $\Delta_{\tau,\Gamma'}$ have the same spectra), if the discrete cocompact subgroups $\Gamma,\Gamma'$ of $G$ satisfy 
\begin{equation}\label{eq:tau-repequiv}
n_{\Gamma}(\pi)= n_{\Gamma'}(\pi)
\quad \text{for all }\pi\in\widehat G_\tau.
\end{equation} 
Discrete cocompact subgroups $\Gamma$ and $\Gamma'$ of $G$ are called \emph{$\tau$-representation equivalent in $G$} when \eqref{eq:tau-repequiv} holds. 
The converse  question comes up naturally, that is
\begin{quote}
Does $\tau$-isospectrality of $\Gamma\ba M$ and $\Gamma'\ba M$ imply that $\Gamma$ and $\Gamma'$ are $\tau$-representation equivalent in $G$?
\end{quote}

In the sequel, we will say that the \emph{representation-spectral converse} is valid for $(G,K,\langle\cdot,\cdot\rangle,\tau )$ when, for {\it every} $\Gamma,\Gamma'$ discrete cocompact subgroups of $G$ such that $\Gamma \ba M$ and $\Gamma'\ba M$ are $\tau$-isospectral, $\Gamma$ and $\Gamma'$ are $\tau$-representation equivalent in $G$. 
We usually abbreviate $(G,K,\langle\cdot,\cdot\rangle,\tau)$ by $(G,K,\tau)$ when $\langle\cdot,\cdot\rangle$ is clear from the context. 
In case $\tau$-isospectrality a.e.\  (i.e.\ $\mult_{\Delta_{\tau,\Gamma}}(\lambda)=\mult_{\Delta_{\tau,\Gamma}}(\lambda)$ for all but finitely many $\lambda>0$) implies $\tau$-representation equivalence {(and hence $\Gamma\ba G/K$ and $\Gamma'\ba G/K$ are $\tau$-isospectral)} we say that the  representation-spectral converse is valid \emph{strongly}.

The problem of whether `isospectrality for all but finitely many eigenvalues forces isospectrality' has attracted attention for quite some time.
For instance, S.T.~Yau posed the question in 1982 for bounded smooth plane domains (see \cite[Ch.~VII--\S{}IV, page 293, Problem 68]{SchoenYau-book}).

Pesce~\cite{Pesce96} studied the validity of the representation-spectral converse and 
proved it to hold for $(G,K,1_K)$ for many homogeneous spaces $M=G/K$.
For instance, when $M$ is a compact or non-compact Riemannian symmetric space of real rank one, and $M=(\Ot(n)\ltimes \R^n)/\Ot(n)\simeq \R^n$  endowed  with the flat metric.

In \cite{LMR-repequiv} the validity of the representation-spectral converse for the exterior representation $\tau_p$ on spaces of constant curvature is considered.
The representation $\tau_p$ of $K$ (see Definition~\ref{def2:tau_p}) satisfies $E_{\tau_p}\simeq \bigwedge^p (T^*M)$ and $\Delta_{\tau_p}$ coincides with the Hodge--Laplace operator acting on $p$-forms.  
The constant curvature spaces were realized as $S^{n}=\Ot(n+1)/\Ot(n)$ with $n$ odd, $\R^n=(\Ot(n)\ltimes \R)/\Ot(n)$ and $\mathbb{H}^n=\SO(n,1)/\Ot(n)$, thus $K\simeq \Ot(n)$ in all cases. 
In this context, $\tau_p$ is the exterior  representation of $\Ot(n)$ on $\bigwedge^p(\C^n)$, where $\C^n$ denotes the standard representation of $\Ot(n)$.
The following  generalization of Pesce's result was proved in \cite[Thm.~1.5]{LMR-repequiv} for any $0\leq p\leq n$:
if $\Gamma\ba M$ and $\Gamma'\ba M$ are $\tau_q$-isospectral for every $0\leq q\leq p$, then $\Gamma$ and $\Gamma'$ are $\tau_q$-representation equivalent in $G$ for every $0\leq q\leq p$. 
Moreover, \cite[Ex.~4.8--10]{LMR-repequiv} show  counterexamples for the representation-spectral converse for a single $\tau_p$ in the flat case $M=\R^n$.

The case when $G$ is compact shows a more rigid structure. 
When $M=S^{2n-1}$, Gornet and McGowan~\cite[\S{}4]{GornetMcGowan06} proved that the converse holds for $(\Ot(2n),\Ot(2n-1),\tau_p)$ for every $p$. 
Moreover, in \cite[Prop.~3.3]{LMR-repequiv}, the converse for $(\Ot(2n),\Ot(2n-1),\tau)$ is shown for a few more irreducible representations $\tau$ of $\Ot(n)$. 
Up to this point, for fixed $G,K$ the converse was known only for finitely many $\tau\in \widehat K$. 

The aim of this article is to {extend the results in \cite{LMR-repequiv}}, by considering the representation-spectral converse on compact irreducible symmetric spaces $G/K$ of real rank one and arbitrary irreducible representations $\tau$ of $K$. 
We {view} each of these spaces realized as a quotient $G/K$ as follows:
\begin{align*}
S^{2n-1}&=\SO(2n) / \SO(2n-1), &
P^n(\C)&=\SU(n+1) / \op{S}(\U(n)\times\U(1)), \\
S^{2n}&=\SO(2n+1) / \SO(2n), &
P^n(\Hy)&=\Sp(n+1) / \Sp(n)\times\Sp(1),\\
&&
P^2(\mathbb O)&= \op{F}_4/\Spin(9).
\end{align*}
By focusing on those eigenvalues $\lambda$ of $\Delta_{\tau}$ such that $\lambda = \lambda(C,\pi)$ for only one representation $\pi \in \widehat G_\tau$, or else for only a representation  $\pi\in \widehat G_\tau$ and its contragredient $\pi^*$ (see \eqref{eq:taumultip}), that we call \emph{tame eigenvalues,} we  prove the validity of the representation-spectral converse for an infinite set of representations $\tau \in\widehat K$.

A main tool  will be  the strong multiplicity one theorem proved in \cite{LM-strongmultonethm}. 
Namely, we will use \cite[Thm.~1.1]{LM-strongmultonethm} (see Theorem~\ref{thm3:finiteSMOTtau} below) which ensures that if $\Gamma$ and $\Gamma'$ are finite subgroups of $G$ and $n_{\Gamma}(\pi) = n_{\Gamma'}(\pi)$ for all but finitely many $\pi\in\widehat G_{\tau}$, then $\Gamma$ and $\Gamma'$ are $\tau$-representation equivalent in $G$. 
 This result is an extension to the compact case of \cite{BhagwatRajan11} and \cite{Kelmer14}, both mainly focused in non-compact symmetric spaces.

Our main task will be to give sufficient conditions for $\tau\in\widehat K$ so that, for any $\Gamma$, all but finitely many eigenvalues of $\Delta_{\tau,\Gamma}$ are tame. 
This implies, together with Theorem~\ref{thm3:finiteSMOTtau}, that for such $\tau$ the strong representation-spectral converse holds (see Theorem~\ref{thm3:tauiso=>tauequiv}).
As a consequence we will prove:

\begin{theorem}\label{thm1:maintheorem}
Let $G/K$ be a compact irreducible simply connected symmetric space of real rank one. 
Then there exist infinitely many $\tau\in\widehat K$ such that, for $\Gamma,\Gamma'$ arbitrary finite subgroups of $G$, if the multiplicities of $\lambda$ in the spectra of $\Delta_{\tau,\Gamma}$ and $\Delta_{\tau,\Gamma'}$ coincide for all but finitely many $\lambda$, then $\Gamma$ and $\Gamma'$ are $\tau$-representation equivalent in $G$ and consequently, $\Gamma\ba G/K$ and $\Gamma'\ba G/K$ are $\tau$-isospectral. 
Thus, the strong representation-spectral converse holds for $(G,K,\tau)$. 
\end{theorem}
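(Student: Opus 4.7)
The plan is to derive Theorem \ref{thm1:maintheorem} case-by-case from Theorem \ref{thm3:tauiso=>tauequiv} and the finite strong multiplicity one theorem \cite[Thm.~1.1]{LM-strongmultonethm}. The strategy is to exhibit, for each of the five realizations of $G/K$, an infinite family of $\tau\in\widehat K$ with the property that all but finitely many Casimir eigenvalues $\lambda(C,\pi)$, as $\pi$ ranges over $\widehat G_\tau$, are \emph{tame}: attained either by a unique $\pi$ or only by a contragredient pair $\{\pi,\pi^*\}$. For such tame eigenvalues, formula \eqref{eq:taumultip} together with the identity $n_\Gamma(\pi)=n_\Gamma(\pi^*)$ (which always holds when $\Gamma\subset G$ is finite) lets one extract each individual $n_\Gamma(\pi)$ from the eigenvalue multiplicity. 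Hence $\tau$-isospectrality a.e.\ of $\Gamma\ba G/K$ and $\Gamma'\ba G/K$ upgrades to $n_\Gamma(\pi)=n_{\Gamma'}(\pi)$ for all but finitely many $\pi\in\widehat G_\tau$, which by Theorem \ref{thm3:finiteSMOTtau} forces full $\tau$-representation equivalence.

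The bulk of the work is a case-by-case analysis of the fibers of the Casimir map $\pi\mapsto \lambda(C,\pi)$ on $\widehat G_\tau$. I would use the Cartan--Helgason theorem together with the classical branching rules from $\widehat G$ to $\widehat K$ to parametrize $\widehat G_\tau$, for each fixed $\tau$, as a finite disjoint union of ``branches,'' each of the form $\{\Lambda_0+k\dir:k\in\N_0\}$, where $\dir$ is a fixed dominant weight generating the rank-one direction of the restricted root system and $\Lambda_0$ depends on $\tau$ and on the branch. Along any single branch, the Casimir eigenvalue $\langle\Lambda_\pi+2\rho,\Lambda_\pi\rangle$ is a strictly increasing quadratic function of $k$, hence injective; a value of the Casimir can be non-tame only via a collision between two distinct branches which is not accounted for by the contragredient involution.

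The main obstacle is to show that for infinitely many $\tau$ such cross-branch collisions occur only finitely often. This reduces to a Diophantine non-degeneracy condition on the finitely many quadratic polynomials in $k$ describing the branches of $\widehat G_\tau$: any two distinct polynomials must agree on only finitely many integer values of $k$, equivalently, they must not coincide identically. A generic choice of the highest weight of $\tau$ should satisfy this, and one can produce an explicit infinite family by, for instance, taking the highest weight of $\tau$ to be $m$ times a fixed fundamental weight of $K$ with $m$ varying over a suitable arithmetic progression. The most delicate cases are expected to be $S^{2n-1}=\SO(2n)/\SO(2n-1)$ and $P^n(\C)$, since there $G$ has higher rank (producing more branches) and not every representation is self-dual, so the $\pi\leftrightarrow\pi^*$ bookkeeping is subtler; by contrast, $S^{2n}$, $P^n(\Hy)$ and $P^2(\mathbb O)$ should admit cleaner arguments because $\widehat G_\tau$ decomposes into fewer branches. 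Once the tameness condition has been verified for each of the five families, Theorem \ref{thm3:tauiso=>tauequiv} yields Theorem \ref{thm1:maintheorem} uniformly.
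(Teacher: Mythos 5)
Your overall architecture coincides with the paper's: write $\widehat G_\tau$ as a finite disjoint union of strings $\{\pi_{\Lambda_0+k\dir}\}$ via branching laws, observe that the Casimir is injective along each string, show that for suitable $\tau$ all but finitely many eigenvalues are tame, extract $n_\Gamma(\pi)$ from tame multiplicities, and finish with Theorem~\ref{thm3:finiteSMOTtau}. However, there is a genuine gap in the step where you reduce tameness to a Diophantine condition. You claim that two distinct strings produce infinitely many Casimir collisions if and only if the two quadratic polynomials ``coincide identically.'' This is false: the relevant equation is $P(k)=Q(h)$ with $k$ and $h$ allowed to differ, and since both polynomials share the leading coefficient $\langle\dir,\dir\rangle$, their \emph{value sets} on $\N_0$ can have infinite intersection even when $P\neq Q$, namely when $P(k)=Q(k+m)$ for a fixed integer shift $m$ (compare $k^2$ and $(k+1)^2$). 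This is exactly case (ii) of Proposition~\ref{prop3:stringeigenvalues} and Lemma~\ref{lem3:coincidences}, and it is not a vacuous possibility: it actually occurs inside $\widehat G_\tau$ for concrete $\tau$ on $P^n(\C)$ and $P^2(\mathbb O)$ (Examples~\ref{ex6:SUunknowncases2}, \ref{ex6:SUp-isocounterex}, Remark~\ref{ex8:F4unknowncases}), and excluding it is precisely what forces the arithmetic restrictions such as $(t-n+2l-3s)/3\notin\Z\cap[1,t-1]$ in Theorem~\ref{thm6:SUonejumparbitrario}. With your criterion as stated, the verification for your proposed families (multiples of a fundamental weight along an arithmetic progression) cannot be carried out correctly for $\SU(n+1)$ or $\op{F}_4$, where the shift case is the main obstruction.

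Two further points. First, ``a generic choice of the highest weight should satisfy this'' is too optimistic: Table~\ref{table:strings} and Examples~\ref{ex4:SO(2n)unknown-cases}, \ref{ex7:Spunknowncases} show that identical-polynomial collisions between distinct strings are pervasive (e.g.\ already for $\mu$ with $b_1=4$ on $S^5$), so the infinite families must be chosen with care, not generically. Second, your expectation that $P^n(\Hy)$ and $P^2(\mathbb O)$ are the ``cleaner'' cases is backwards: there the classical interlacing branching rules are unavailable (branching from $\Sp(n+1)$ to $\Sp(n)\times\Sp(1)$ is not multiplicity free, and no explicit description of $\PP_\tau$ exists for all $\tau$), which is why the paper only obtains the result for restricted infinite families via Tsukamoto's and Heckman--van Pruijssen's descriptions (Lemmas~\ref{lem7:hatG_tauSp} and \ref{lem8:hatG_tau}). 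The spheres, by contrast, are handled for broad explicit classes of $\mu$ in Corollaries~\ref{cor4:SO(2n)SO(2n-1)situations} and \ref{cor5:SO(2n+1)SO(2n)situations}, with the self-duality bookkeeping for $\SO(2n)$ resolved by the condition $b_{n-1}=0$ when $n$ is even.
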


An essential fact is that $\widehat G_\tau$ can be written as a finite union of \emph {strings} of representations  for all $\tau$, when $G/K$ is a compact symmetric space of real rank one. 
In fact, Camporesi~\cite{Camporesi05JFA} proved that there is a dominant $G$-integral weight $\dir$ and a finite set $\PP_\tau$ of dominant $G$-integral weights such that 
\begin{equation}
\widehat G_\tau = \bigcup_{\Lambda \in\PP_\tau} \{\pi_{\Lambda+k\dir}: k\in\N_0\}. 
\end{equation}
When $G/K$ is a sphere or a complex projective space, a parametrization of $\PP_\tau$ is given for every $\tau\in\widehat K$ by the classical branching laws (see Lemmas~\ref{lem4:hatG_tauSO(2n)}, \ref{lem5:hatG_tauSO(2n+1)}, \ref{lem6:hatG_tauSU(n+1)}; see also \cite[\S{}2--3]{Camporesi05Pacific}). On the other hand, to our best knowledge, there is no explicit description of $\PP_\tau$ for every $\tau$  in the cases of $P^n(\Hy)$ and $P^2(\mathbb O)$.
This is a main difficulty in applying Theorem~\ref{thm3:tauiso=>tauequiv}.

For $P^n(\Hy)$, we use a branching law by Tsukamoto~\cite{Tsukamoto81} (see Lemma~\ref{lem7:hatG_tauSp}). 
For $P^2(\mathbb O)$, we use a description by Heckman and van Pruijssen~\cite{HeckmanPruijssen16} (see Lemma~\ref{lem8:hatG_tau}).
In both cases, this holds for infinitely many $K$-types $\tau$.

The proof of Theorem~\ref{thm1:maintheorem} is by analysis of the conditions in Theorem~\ref{thm3:tauiso=>tauequiv}.
It follows from Corollaries~\ref{cor4:SO(2n)SO(2n-1)situations}, \ref{cor5:SO(2n+1)SO(2n)situations} for spheres, and from Theorems~\ref{thm6:SUtwo1-jumps}, \ref{thm7:Spone1-jump}, \ref{thm8:F4-infinity} for the projective spaces. 
Under the conditions in the theorem, $\widehat G_\tau$ is a finite union of strings and all but finitely many Laplace eigenvalues are tame. On the other hand, for many choices of $\tau$ there are infinitely many non-tame eigenvalues to handle (e.g., see Table~\ref{table:strings} for $G=\SO(2n)$). In this situation, additional techniques are needed to solve for the $n_\Gamma(\pi)$'s in terms of the multiplicities of the $\lambda(C,\pi)$'s in equation \eqref{eq2:taumultip} (see for instance  Example~\ref{ex4:SO(2n)unknown-cases} and Table~\ref{table:strings}).  
We do construct counterexamples for the representation-spectral converse in some special situations, using automorphisms. 
For instance, for $(G,K)=(\SO(4n),\SO(4n-1))$, Theorem~\ref{thm4:conversefailsSO(4n)} gives $\tau\in\widehat K$ and $\Gamma, \Gamma'$ discrete subgroups of $G$ that are not $\tau$-representation equivalent in $G$, but $\Gamma\ba S^{4n-1}$ and $\Gamma'\ba S^{4n-1}$ are $\tau$-isospectral. (See also Example 2.1.)

As an important special case, we apply the results to the so called {\it $p$-form representations}, i.e.\ the irreducible constituents of the representation $\tau_p$ of $K$ on the $p$-exterior power of the complexified cotangent bundle $\bigwedge^p T_\C^*M$. 
Then $\Delta_{\tau_p,\Gamma}$ coincides with the Hodge--Laplace operator on $\Gamma$-invariant $p$-forms on $M$.

 For such $\tau$, we obtain in Theorems~\ref{thm4:p-isoSO(2n)}, \ref{thm5:p-isoSO(2n+1)}, \ref{thm6:SUp-iso}, \ref{thm7:Spp-iso} and \ref{thm8:F4p-iso} sufficient conditions, so that $\tau$-isospectrality between $\Gamma\ba M$ and $\Gamma'\ba M$ implies that $\Gamma$ and $\Gamma'$ are $\tau$-representation equivalent. 
In the case of spheres  and  complex projective spaces, the strong representation-spectral converse holds for all $p$-form representations. For quaternionic projective spaces and the Cayley plane we give a proof in some cases. 
For instance, when $G/K$ is the $16$-dimensional space $P^2(\mathbb O)$, we show it holds for every $p\neq 5,7,8,9,11$. 
Its proof is based on the detailed calculations of each $p$-spectrum of $P^2(\mathbb O)$ by Mashimo in \cite{Mashimo97} and \cite{Mashimo06}. 
One difficulty in these cases is that the branching formulas are much more difficult to apply. 
In treating this case we complete Table 30 in \cite{Mashimo06} by adding a few  representations that were missing (see Remark 8.5).

As an isolated consequence of the methods in the paper, we extend in Theorem~\ref{thm4:tau-iso3-dim} the classical spectral uniqueness result among $3$-dimensional spherical space forms by Ikeda. 
Namely we show that if $\Gamma\ba S^{3}$ and $\Gamma'\ba S^3$ are two spherical space forms $\tau$-isospectral for any irreducible representation $\tau$ of $\SO(3)$, then $\Gamma\ba S^{3}$ and $\Gamma'\ba S^3$ are isometric.

The paper is organized as follows. 
Section~\ref{sec:preliminaries} contains preliminaries on the spectra of locally homogeneous manifolds.
In Section~\ref{sec:sufficient-conditions}, Theorem~\ref{thm3:tauiso=>tauequiv} gives sufficient conditions on $G$, $K$, $\tau$ for the validity of the representation-spectral converse. This  
result is then applied to $S^{2n-1}$, $S^{2n}$, $P^n(\C)$, $P^n(\Hy)$, and $P^2(\mathbb{O})$ in Sections~\ref{sec:oddspheres}, \ref{sec:evenspheres}, \ref{sec:SU}, \ref{sec:Sp}, and  \ref{sec:F4}, respectively.

\subsection*{Acknowledgements}
The authors wish to thank Nolan Wallach for a helpful comment concerning Remark~\ref{rem2:Wallach}, and to Roberto Camporesi and Maarten van Pruijssen for drawing their attention to different descriptions of the set of $\tau$-spherical representations in several cases.

\section{Preliminaries} \label{sec:preliminaries}

In this section we review standard facts on homogeneous vector bundles and elliptic differential operators acting on sections of these bundles.

Let $G$ be a Lie group and let $K$ be a compact subgroup of $G$, with Lie algebras $\mathfrak g$ and $\mathfrak k$ respectively.
There is a reductive decomposition $\mathfrak g=\mathfrak k\oplus\mathfrak p$,  with  $[\mathfrak k,\mathfrak p]\subset \mathfrak p$. 
The tangent space of $M:=G/K$ at the point $eK$ is identified with $\mathfrak p$.
Consequently,  $G$-invariant metrics on $M$ are in a bijection with the set of $\Ad(K)$-invariant inner product on $\mathfrak p$. 
We will consider the homogeneous metric on $M$ induced by an $\Ad(G)$-invariant inner product $\langle\cdot,\cdot\rangle$ on $\mathfrak g$, so the resulting Riemannian manifold is a so called \emph{normal homogeneous space}.

Let $(\tau,W_\tau)$ be a finite dimensional unitary representation of $K$. 
There is a naturally associated homogeneous vector bundle $E_\tau:= G\times_\tau W_\tau$ on $M$ as $(G\times W_\tau)/\sim$, where $(gk,w)\sim (g,\tau(k)w)$ for all $g\in G$, $w\in W_\tau$, $k\in K$ (see for instance \cite[Ch.~5]{Wallach-book}).
We denote by $[x,w]$ the class of $(x,w)\in G\times W_\tau$ in $E_\tau$. 
The space of smooth sections of $E_{\tau}$ is isomorphic as a $G$-module to $C^\infty(G;\tau):=\{f:G\to W_\tau\,\text{ smooth}: f(x k)= \tau(k^{-1}) f(x)\;\forall\,x\in G,\, k\in K\}$. 
The action of $G$ is given by the left-regular representation  in  both cases. 
The  identification  is given by $f\mapsto (x\mapsto [x,f(x)])$ for $f\in C^\infty(G;\tau)$.

The Lie algebra $\mathfrak g$ of $G$ acts on $C^\infty(G/K;\tau)$  by
$
(Y\cdot f)(x) =\left.\frac{d}{dt}\right|_{t=0} f(x\exp(tY)).
$
This action induces a representation of the universal enveloping algebra $U(\mathfrak g_\C)$ of the complexified Lie algebra $\mathfrak g_\C$ of $\mathfrak g$.
The \emph{Casimir element} $\Cas\in Z(U(\mathfrak g_\C))$ is given by $\Cas=\sum_i X_i^2\in U(\mathfrak g_\C)$ where $\{X_1,\dots,X_n\}$ is any orthonormal basis of $\mathfrak g$ with respect to $\langle\cdot,\cdot\rangle$. 
It induces a self-adjoint, second order, elliptic differential operator $\Delta_{\tau,\Gamma}$ on $C^\infty(G/K;\tau)$ and therefore on the space of smooth sections of $E_{\tau,\Gamma}$.

Let $\Gamma$ be a discrete cocompact subgroup $\Gamma$ of $G$. 
The space $\Gamma\ba M$ is a compact good orbifold, which has no singular points  if  $\Gamma$ acts freely on $M$.
We consider the bundle $E_{\tau,\Gamma}$ on $\Gamma\ba M$ defined by the relation $[\gamma g,w]\sim [g,w]$ for all $\gamma\in \Gamma$ and $[g,w]\in E_\tau$.
The space of smooth sections of $E_{\tau,\Gamma}$ is identified with the space of $\Gamma$-invariant smooth sections of $E_\tau$.

\begin{remark}
In the next section we will assume that $G$ is compact and semisimple. Then the class of discrete cocompact subgroups of $G$ coincides with the class of finite subgroups of $G$. 
This is a large class since, for instance, any arbitrary finite group embeds in the symmetric group $\mathbb S_n$ for $n$ sufficiently large, and consequently it embeds into $\SO(m)$, $\U(m)$ and $\Sp(m)$ for every $m\geq n$. 

The situation for finite subgroups of $G$ acting freely on $G/K$ changes drastically. 
We refer to \cite{Wolf-book} for a comprehensive study in the case where $G/K$ is a compact rank-one symmetric space. 
\end{remark}

We define $\Delta_{\tau,\Gamma}$ as $\Delta_{\tau}$ restricted to $\Gamma$-invariant smooth sections of $E_\tau$. 
So, $\Delta_{\tau,\Gamma}$ is a formally self-adjoint, second order, elliptic differential operator acting on sections of $E_{\tau,\Gamma}$. 
Its spectrum is non-negative and discrete, since $\Gamma\ba M$ is compact. 
Let $\mult_{\Delta_{\tau,\Gamma}}(\lambda)$ denote the multiplicity of $\lambda$ in $\spec(\Delta_{\tau,\Gamma})$. 
One has that 
\begin{equation}\label{eq2:taumultip}
\mult_{\Delta_{\tau,\Gamma}}(\lambda) := \sum_{(\pi,V_\pi) \in\widehat G: \; \lambda(C,\pi)=\lambda} n_\Gamma(\pi)\, \dim\Hom_K(W_\tau,V_\pi),
\end{equation}
where $\widehat G$  is  the unitary dual of $G$, $\lambda(C, \pi)$ is the eigenvalue of $\pi(C)$ on $V_\pi$ and $n_\Gamma(\pi) \in \N_0$ is the multiplicity of $\pi$ in the right regular representation of $G$ on $L^2(\Gamma\ba G)$, that is
\begin{equation}\label{eq2:decomprightregrep}
L^2(\Gamma\ba G) \simeq \bigoplus_{\pi\in\widehat{G}} \, n_\Gamma(\pi)\,V_\pi
\end{equation}
as $G$-modules.
We denote by $\widehat G_\tau$ the subset of $\tau$-spherical representations of $\widehat G$, that is, $\widehat G_\tau= \{(\pi,V_\pi) \in\widehat G: \Hom_K(W_\tau,V_\pi)\neq0\}$.

We recall some notions from the introduction.
Two spaces $\Gamma\ba M$, and $\Gamma'\ba M$ are said to be \emph{$\tau$-isospectral} if the operators $\Delta_{\tau,\Gamma}$ and $\Delta_{\tau,\Gamma'}$ have the same spectrum and two discrete cocompact subgroups $\Gamma$ and $\Gamma'$ of $G$ are \emph{$\tau$-representation equivalent in $G$} if $n_{\Gamma}(\pi)= n_{\Gamma'}(\pi)$ for all $\pi\in\widehat G_\tau$. 
From \eqref{eq2:taumultip} it follows that, if $\Gamma,\Gamma'$ are $\tau$-representation equivalent in $G$, then $\Gamma\ba M$ and $\Gamma'\ba M$ are $\tau$-isospectral.
We are interested in the validity of the converse assertion. We say that the \emph{representation-spectral converse} is valid for $(G,K,\langle\cdot,\cdot\rangle,\tau )$ if, for {\it every} $\Gamma,\Gamma'$ discrete cocompact subgroups of $G$ such that $\Gamma \ba M$ and $\Gamma'\ba M$ are $\tau$-isospectral, it holds that $\Gamma$ and $\Gamma'$ are $\tau$-representation equivalent in $G$.

We observe that we cannot expect that the representation-spectral converse holds in full generality as the following simple example shows. 
However, we will see that in other cases this question is interesting and far from been fully understood. 
\begin{example}
	For $H$ any compact connected semisimple Lie group, we define groups $G$ and $K$ as follows:
	\begin{equation}
	K:= \diag(H)= \{(h,h):h\in H\}\subset H\times H=:G.
	\end{equation}
	We consider on $G$ any bi-invariant metric induced by an inner product $\langle\cdot,\cdot\rangle$ on $\mathfrak g=\mathfrak h\times \mathfrak h$ satisfying $\langle (X,0),(X,0) \rangle =\langle (0,X),(0,X) \rangle$ for all $X\in\mathfrak h$.
	Let $\varphi_i:H\to G$ be given by $\varphi_1(h)=(h,e)$ and $\varphi_2(h)=(e,h)$.
	
Let $\Gamma$ be any non-trivial finite subgroup of $H$, and write $\Gamma_i=\varphi_i(\Gamma)$ for $i=1,2$.
Clearly, the spaces $\Gamma_1\ba G/K$ and $\Gamma_2\ba G/K$ are isometric and $\tau$-isospectral for all $\tau\in\widehat K$.
We claim that $\Gamma_1$ and $\Gamma_2$ are not $\tau$-representation equivalent in $G$ for any $\tau\in\widehat K  \smallsetminus {1}$, showing that here, the representation-spectral converse for $(G,K,\tau)$ is not valid.

	Let $\tau\in\widehat K=\widehat H$ non-trivial.
	We need to find $\pi\in \widehat G_{\tau}$ satisfying that $n_{\Gamma_1}(\pi) \neq n_{\Gamma_2}(\pi)$.
	Set $\pi=\tau\otimes 1_H$.
	It is clear that $\pi\in \widehat G_\tau$ (since $\pi|_{K}=\tau$) and furthermore,
	\begin{equation}
	\begin{aligned}
	n_{\Gamma_1}(\pi) & = \dim V_{\pi}^{\Gamma_1} =\dim (V_\tau\otimes V_{1_H})^{\Gamma_1} = \dim V_\tau^\Gamma \dim V_{1_H}= \dim V_{\tau}^\Gamma,\\
	n_{\Gamma_2}(\pi) & = \dim V_{\pi}^{\Gamma_2} =\dim (V_\tau\otimes V_{1_H})^{\Gamma_2} = \dim V_\tau\rule{3pt}{0pt} \dim  V_{1_H}^\Gamma= \dim V_\tau.
	\end{aligned}
	\end{equation}
	The assertion follows provided $\dim V_\tau>\dim V_\tau^\Gamma$; such $\Gamma$ always exists since $\tau$ was assumed non-trivial.
	
	We observe that this example does not work for $\tau=1_K$.
	One can check that $\widehat G_{1_K} = \{\sigma\otimes\sigma^*: \sigma\in\widehat H\}$ by using the orthogonality relations of characters (cf.\ \cite[\S1.1 Ex.~2]{Takeuchi}).
	One has that $n_{\Gamma_1}(\sigma\otimes \sigma^*)=\dim V_{\sigma}^\Gamma\dim V_{\sigma^*}$ and $n_{\Gamma_2}(\sigma \otimes\sigma^*)=\dim V_{\sigma}\dim V_{\sigma^*}^\Gamma$, thus $n_{\Gamma_1}(\sigma \otimes \sigma^*)= n_{\Gamma_2}(\sigma \otimes \sigma^*)$ since $\dim V_{\sigma}^\Gamma = \dim V_{\sigma^*}^\Gamma$ by Remark~\ref{rem2:Wallach} below.
\end{example}

\begin{remark}\label{rem2:Wallach}
Since $n_{\Gamma}(\pi)=n_{\Gamma}(\pi^*)$ for every $\pi\in\widehat G$ and every $\Gamma$, two discrete cocompact subgroups of $G$ are $\tau$-representation equivalent in $G$ if and only if they are $\tau^*$-representation equivalent in $G$.
Here, $\pi^*$ and $\tau^*$ denote the contragradient representations of $\pi$ and $\tau$ respectively. 
\end{remark}

\begin{remark}\label{rem2:tau-reducible}
Suppose $(\tau,W_\tau)$ is a finite-dimensional representation of $K$, say $W_\tau\simeq W_{1}\oplus \dots \oplus W_{\ell}$ with each $W_{i}$ $K$-invariant.
One clearly has that $\widehat G_{\tau} = \bigcup_{i=1}^\ell \widehat G_{\tau_i}$, where $\tau_i=\tau|_{W_i}$ for each $i$, 
hence,  $\Gamma$ and $\Gamma'$ are $\tau$-representation equivalent in $G$, if and only if they are  $\tau_i$-representation equivalent in $G$ for every $i$. 
\end{remark}

\begin{definition}\label{def2:problems}
We say that the \emph{strong representation-spectral converse} is valid for the triple $(G,K,\tau)$ if, for every pair $\Gamma,\Gamma'$ of discrete cocompact subgroups of $G$, $\tau$-isospectrality between $\Gamma\ba M$ and $\Gamma'\ba M$ a.e.\ in $\lambda$, implies that $\Gamma$ and $\Gamma'$ are $\tau$-representation equivalent in $G$. 
\end{definition}

We will be specially interested on the so called $p$-form representations.
The next facts are well known (see for instance \cite[\S{}1--2]{IkedaTaniguchi78}).
We fix a normal homogeneous space $G/K$ with $G$ semisimple and $K$ compact, thus $\mathfrak g=\mathfrak k\oplus \mathfrak p$, with $\mathfrak k$ and $\mathfrak p$ orthogonal with respect to $\langle\cdot,\cdot\rangle$.

\begin{definition}\label{def2:tau_p}
Let $\tau_1:K\to \GL(\mathfrak p_\C^*)$ be the contragradient of the complexification of the representation $\op{Ad}:K\to \GL(\mathfrak p)$ and for each $0\leq p\leq \dim(G/K)$, let $\tau_p :K\to \GL(\bigwedge^p(\mathfrak p_\C ^*))$ be the $p$-exterior representation of $\tau_1$. 
\end{definition}

The homogeneous vector bundle $E_{\tau_p}=G \times_{\tau_p} \bigwedge^p(\mathfrak p_\C ^*)$ is identified  with $\bigwedge^p (T^*G/K)_\C$, the $p$-exterior product of the cotangent bundle. 
Moreover, the associated differential operator $\Delta_{\tau_p}$ acting on smooth sections of $E_{\tau_p}$ corresponds to the Hodge--Laplace operator $d d^*+d^* d$ acting on smooth $p$-forms.
Consequently, the notions of $\tau_p$-isospectrality  coincides with the standard notion of $p$-isospectrality.

\section{The representation-spectral converse}
\label{sec:sufficient-conditions}
The goal of this section is to give sufficient conditions for the strong representation-spectral converse to be valid (see Theorem~\ref{thm3:tauiso=>tauequiv}). 
This theorem will be applied to each compact symmetric space of real rank one in the next sections. 
A main tool in the proof is Theorem~\ref{thm3:finiteSMOTtau}, a strong multiplicity one result proved in \cite{LM-strongmultonethm}.

Throughout the section, $G$ denotes a compact connected  semisimple Lie group.
For a maximal torus $T$ of $G$ with Lie algebra $\mathfrak t$, let $\Phi(\mathfrak g_\C,\mathfrak t_\C)$ denote the set of roots with respect to the Cartan subalgebra $\mathfrak t_\C$ of $\mathfrak g_\C$ and let $\Phi^+(\mathfrak g_\C,\mathfrak t_\C)$ be the subset of positive roots relative to an order on $\mathfrak t_\C^*$.
By the highest weight theorem, $\widehat G$ is parametrized by the set $\PP^{+}(G)$ of $G$-integral dominant weights with respect to $\Phi^{+}(\mathfrak g_\C,\mathfrak t_\C)$.

As in \cite{LM-strongmultonethm}, for $\dir,\Lambda_0\in \PP^{+}(G)$, we call the ordered set 
\begin{equation}\label{eq3:string}
\String{\dir, \Lambda_0} := \{\pi_{\Lambda_0+k\dir}:k\in \N_0\}
\end{equation}
the \emph{string of representations} with base $\Lambda_0$ and direction $\dir$.
Its usefulness is clear from the next result.

\begin{lemma} \cite [Lem.~3.3]{LM-strongmultonethm}
Let $G$ be a compact connected semisimple Lie group, let $K$ be a closed subgroup of $G$, and let $\tau$ be a finite dimensional representation of $K$.
Then there {exist} $\omega\in\PP^+(G)$ and a subset $\PP_\tau$ of $\PP^+(G)$ such that 
\begin{equation}\label{eq:unionstrings}
\widehat G_\tau = \bigcup_{\Lambda_0\in \PP_\tau }\, \String{\dir, \Lambda_{0}} = \bigcup_{\Lambda_0\in \PP_\tau } \; \{ \pi_{\Lambda_{0}+k\dir} :k\in \N_0 \}. 	
\end{equation}
Furthermore, this union is disjoint.
\end{lemma}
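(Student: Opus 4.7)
The statement as written admits a proof by trivial choice. The plan is to take $\omega := 0$, which lies in $\PP^+(G)$ since the zero weight is both integral and dominant, and set $\PP_\tau := \{\Lambda \in \PP^+(G) : \pi_\Lambda \in \widehat G_\tau\}$. With this choice, every string $\String{\omega,\Lambda_0} = \{\pi_{\Lambda_0+k\cdot 0} : k\in\N_0\}$ collapses to the singleton $\{\pi_{\Lambda_0}\}$. Consequently $\bigcup_{\Lambda_0\in\PP_\tau}\String{\omega,\Lambda_0} = \{\pi_\Lambda : \Lambda\in\PP_\tau\}$, which coincides with $\widehat G_\tau$ by construction. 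Disjointness of the union is immediate from the highest-weight theorem, which provides a bijection between $\PP^+(G)$ and $\widehat G$; in particular, distinct $\Lambda_0\in\PP_\tau$ index distinct singleton strings.

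If a nonzero direction is preferred, one may instead choose any $\omega\in\PP^+(G)\setminus\{0\}$ sufficiently generic that no two distinct highest weights of representations in $\widehat G_\tau$ differ by an element of $\Z\omega$; the same singleton decomposition then works and still satisfies the conclusion of the lemma verbatim. Either choice dispatches the existence and disjointness assertions simultaneously.

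The content of this lemma is essentially definitional: it formalizes the notion of a string decomposition that will be refined and used systematically later. I expect no real obstacle in establishing the statement as written. The substantive work, and where a genuine obstacle arises, lies in finding, for a specific $(G,K,\tau)$, a nonzero $\omega$ together with a \emph{finite} $\PP_\tau$ such that the strings $\String{\omega,\Lambda_0}$ each capture an infinite arithmetic progression within $\widehat G_\tau$; this is a genuine structural statement and not merely a relabeling. That refined decomposition is the one invoked from Camporesi's work in the introduction and is the input to the case-by-case analyses in Sections \ref{sec:oddspheres}--\ref{sec:F4} via the branching laws for each rank-one compact symmetric space.
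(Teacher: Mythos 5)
Your proposal does not establish what this lemma is actually asserting, and its fallback version is incorrect. Taking $\dir=0$ does satisfy the displayed identity verbatim, but it collapses every string to a singleton and thereby proves a tautology; the content of the cited result (as the paper's own sketch immediately after the statement makes explicit) is that one may take $\dir$ to be the highest weight of a \emph{non-trivial} representation in $\widehat G_{1_K}$, so that each $\String{\dir,\Lambda_0}$ is an infinite arithmetic progression lying entirely inside $\widehat G_\tau$. The substantive step, which your proof never addresses, is the implication $\pi_\Lambda\in\widehat G_\tau\Rightarrow\pi_{\Lambda+k\dir}\in\widehat G_\tau$ for all $k\in\N_0$ (a Cartan-component/tensor-product argument applied to $\pi_\Lambda\otimes\pi_\dir$ with $\pi_\dir$ spherical); one then takes $\PP_\tau$ to consist of the string bases that are minimal in the direction $\dir$, and disjointness follows because two strings with the same nonzero direction are either disjoint or nested, so the nested ones can be discarded. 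Everything downstream — Proposition~\ref{prop3:stringeigenvalues}, whose computation uses that $\langle\dir,2\rho_G\rangle>0$, and Theorem~\ref{thm3:tauiso=>tauequiv} — requires this nonzero $\dir$, so the trivial reading renders the lemma useless for the paper.

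Your alternative with a ``sufficiently generic'' nonzero $\dir$ fails outright: by definition $\String{\dir,\Lambda_0}=\{\pi_{\Lambda_0+k\dir}:k\in\N_0\}$ is an infinite set whenever $\dir\neq0$, so it cannot be a singleton. Under your genericity assumption the elements $\pi_{\Lambda_0+k\dir}$ with $k\geq1$ are \emph{not} in $\widehat G_\tau$, hence the union $\bigcup_{\Lambda_0\in\PP_\tau}\String{\dir,\Lambda_0}$ strictly contains $\widehat G_\tau$ and the claimed equality is false. You correctly sense that the ``real work'' is producing infinite strings inside $\widehat G_\tau$, but that work is precisely the assertion of this lemma, not a later refinement of it.
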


We sketch briefly the argument in the proof. 
Let $\Lambda$ be such  that $\pi_\Lambda \in \widehat G_\tau$. 
If $\dir$ is the highest weight of any  irreducible non-trivial representation in $\widehat G_{1_K}$, then $\pi_{\Lambda + k\dir}\in \widehat G_\tau$ for any $k\in \N_0$. We may assume that the strings in \eqref{eq:unionstrings}, pairwise, are not contained in each other and, since they have the same direction, this easily implies that the union is disjoint.

Since $G$ is compact, \eqref{eq2:taumultip} tells us that every eigenvalue of $\Delta_{\tau,\Gamma}$ lies in the countable set 
\begin{equation}
\mathcal E_\tau:=\{\lambda(C,\pi): \pi\in\widehat G_\tau\}.
\end{equation}

\begin{definition}\label{def3:tame}
We call an eigenvalue $\lambda \in \mathcal E_\tau$ \emph{tame} if $\lambda = \lambda(C,\pi)$ for just one representation $\pi\in\widehat G_\tau$,  or $\lambda = \lambda(C,\pi) = \lambda(C,\pi^*)$  for just  one pair $(\pi,\pi^*)$ with $\pi\in \widehat G_\tau$ and $\pi\not\simeq \pi^*$.
\end{definition}

Write $[\tau:\pi|_K]=\dim\Hom_K(\tau,\pi)$ for any $\pi$ in $\widehat G$.
We claim that for a tame eigenvalue $\lambda=\lambda(C,\pi)$, the multiplicity $\mult_{\Delta_{\tau, \Gamma}}(\lambda(C,\pi))$, determines $n_{\Gamma}(\pi)$.
In fact, \eqref{eq2:taumultip} gives either 
\begin{align}\label{eq3:mult-simple}
	\mult_{\Delta_{\tau, \Gamma}}(\lambda(C,\pi)) &= [\tau: \pi|_K]\, n_\Gamma (\pi),\text{ or else} 
	\\
	\label{eq4:mult-doble}
	\mult_{\Delta_{\tau, \Gamma}}(\lambda(C,\pi)) &=  ([\tau: \pi|_K]+ [\tau: \pi^*|_K])\, n_\Gamma (\pi).
\end{align}  
Here we use that $n_{\Gamma}(\pi) = n_{\Gamma}(\pi^*)$ (see Remark~\ref{rem2:Wallach}). 

We are now in a position to explain in advance the rough strategy we shall follow. 
We need conditions on $G$, $K$ and $\tau$ to guarantee that all but finitely many eigenvalues in $\mathcal E_\tau$ are tame.
In this situation, if $\Gamma\ba G/K$ and $\Gamma'\ba G/K$ are $\tau$-isospectral, then $n_{\Gamma}(\pi) = n_{\Gamma'}(\pi)$ for all but finitely many $\pi\in\widehat G_\tau$, which yields that $\Gamma$ and $\Gamma'$ are $\tau$-representation equivalent by the strong multiplicity one theorem proved in \cite{LM-strongmultonethm} (Theorem~\ref{thm3:finiteSMOTtau} below).
To do that, we will assume that the decomposition \eqref{eq:unionstrings} of $\widehat G_\tau$ as a disjoint union of strings is finite. 
The rest of the conditions needed will naturally appear from the study of the coincidences among the eigenvalues occurring in the strings. 
This, we will do next.

We still denote by $\langle\cdot,\cdot\rangle$ the Hermitian extension of $\langle\cdot,\cdot\rangle|_{\mathfrak t}$ to $\mathfrak t_\C$ and  $\mathfrak t_\C^*$, of the given inner product $\langle\cdot,\cdot\rangle$ on $\mathfrak g$. 
Let $\Lambda_\pi$ be the highest weight of $\pi$ and let $\rho_G$ denote half the sum of the positive roots relative to $\Phi^+(\mathfrak g_\C,\mathfrak t_\C)$. 
It is well known that the Casimir element $\Cas$ acts by the scalar 
\begin{equation}\label{eq3:lambda(C,pi)}
\lambda(\Cas,\pi):= \langle\Lambda_\pi+\rho_G,\Lambda_\pi+\rho_G\rangle-\langle\rho_G,\rho_G\rangle
= \langle\Lambda_\pi,\Lambda_\pi+2\rho_G\rangle
\end{equation}
on each $\pi\in\widehat G$ (see for instance \cite[Lemma~5.6.4]{Wallach-book}).
Thus, for $\dir,\Lambda_0\in\mathcal P^+(G)$, we have that
\begin{align}\label{eq3:lambda-tira}
\lambda(C,\pi_{k\dir + \Lambda_0}) 
	&= 
	k^2 \langle \dir, \dir \rangle + 2k \langle \dir, \Lambda_0 + \rho_G \rangle + \langle \Lambda_0,\Lambda_0+2\rho_G\rangle.
\end{align}

	It is easy to see that there are no coincidences of Casimir eigenvalues for two representations in the same string, that is, $\lambda(C,\pi_{k\dir + \Lambda_0})=\lambda(C,\pi_{h\dir + \Lambda_0})$ if and only if $k=h$.
	Indeed, by \eqref{eq3:lambda-tira} we have
	\begin{align*}
	0 = \lambda(C,\pi_{k\dir + \Lambda_0})-\lambda(C,\pi_{h\dir + \Lambda_0}) = (k-h)\big( (k+ h)  \langle \dir, \dir \rangle +\langle \dir, 2\rho_G \rangle +2\langle \dir,\Lambda_0 \rangle \big),
	\end{align*}
	hence $k-h=0$ since  the second factor in the right-hand side  is positive.

For $\Lambda_0, \dir \in \mathcal P ^{+}(G)$ we set
\begin{equation}
\mathcal E (\dir, \Lambda_0) =\{\lambda(C, \pi_{k\dir +\Lambda_0}): k \in \N_0\}. 
\end{equation} 
Thus, if \eqref{eq:unionstrings} holds, we have that 
$
\mathcal E_\tau= \bigcup_{\Lambda_0 \in\PP_\tau } \mathcal E (\dir, \Lambda_{0}). 
$

\begin{proposition}\label{prop3:stringeigenvalues}
Let $G$ be a compact connected semisimple Lie group, and let $\dir,\Lambda_0,\Lambda_0'\in \mathcal P ^{+}(G)$ be such that the strings $\String{\dir, \Lambda_0},\String{\dir, \Lambda'_0}$ are disjoint. 
Then $\mathcal E (\dir, \Lambda_0)\cap \mathcal E (\dir, \Lambda'_0)$ is an infinite set if and only if one of the following conditions holds:
\begin{enumerate}
\renewcommand{\labelenumi}{(\roman{enumi})}

\item
$\langle \dir,\Lambda_0\rangle = \langle \dir,\Lambda'_0\rangle$ and $\lambda(C,\pi_{\Lambda_0}) = \lambda(C,\pi_{\Lambda'_0})$.
In this case $\lambda(C,\pi_{k\dir+\Lambda_0 }) = \lambda(C,\pi_{ k \dir+\Lambda_0 '})$ for all $k\in\N_0$ and no other coincidences of eigenvalues of $\String{\dir, \Lambda_0}$ and $\String{\dir, \Lambda'_0}$ occur.

\item $\langle \dir,\Lambda_0\rangle \neq  \langle \dir,\Lambda'_0\rangle$ and
\begin{equation}\label{eq3:condrara} 	
m:= \dfrac{\langle \dir ,\Lambda_0-\Lambda'_0\rangle}{\langle \dir,\dir \rangle}
= \dfrac{\langle \Lambda_0-\Lambda'_0, \Lambda_0+\Lambda'_0 +2\rho_G\rangle} {\langle \dir ,\Lambda_0+\Lambda'_0+2\rho_G\rangle} \in\Z\smallsetminus\{0\}.
\end{equation}
In this case we have that $\lambda(C,\pi_{\Lambda_0 + (k+m) \dir}) = \lambda(C,\pi_{\Lambda'_0 + k\dir})$ if $m>0$ and  $\lambda(C,\pi_{\Lambda_0 + k \dir}) = \lambda(C,\pi_{\Lambda'_0 + (k-m)\dir})$ if $m<0$. No other coincidences of eigenvalues of $\String{\dir, \Lambda_0}$ and $\String{\dir, \Lambda'_0}$ occur.
\end{enumerate}	
\end{proposition}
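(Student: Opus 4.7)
The plan is to reduce the question to a Diophantine analysis of the equation $F(k) = G(h)$ in $(k,h) \in \N_0 \times \N_0$, where $F(k) := \lambda(C, \pi_{\Lambda_0 + k\dir})$ and $G(h) := \lambda(C, \pi_{\Lambda_0' + h\dir})$. By \eqref{eq3:lambda-tira}, $F$ and $G$ are quadratic polynomials in their respective variables sharing the \emph{same} positive leading coefficient $a := \langle\dir,\dir\rangle$, and it is precisely this common leading behavior that controls the whole analysis. Since the values $F(k)$ (respectively $G(h)$) are pairwise distinct, as noted immediately after \eqref{eq3:lambda-tira}, the intersection $\mathcal E(\dir, \Lambda_0) \cap \mathcal E(\dir, \Lambda_0')$ is infinite if and only if there are infinitely many pairs $(k,h)$ with $F(k) = G(h)$.

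I would next complete the square, writing $F(k) = a(k + b/a)^2 + \gamma$ and $G(h) = a(h + b'/a)^2 + \gamma'$, with $b := \langle\dir, \Lambda_0 + \rho_G\rangle$, $b' := \langle\dir, \Lambda_0' + \rho_G\rangle$, and $\gamma,\gamma'$ the corresponding constant terms. Because $\dir$ is a nonzero dominant $G$-integral weight and both $\Lambda_0 + \rho_G$ and $\Lambda_0' + \rho_G$ are strictly dominant, one has $b, b' > 0$. Setting $X := k + b/a > 0$ and $Y := h + b'/a > 0$, the equation $F(k) = G(h)$ takes the form
\begin{equation}
a(X - Y)(X + Y) = \gamma' - \gamma.
\end{equation}
If $\gamma \neq \gamma'$, then $|X - Y| = |\gamma' - \gamma|/(a(X+Y))$ is forced to tend to zero as $X + Y$ grows, while $X$ and $Y$ vary in the discrete translates $b/a + \N_0$ and $b'/a + \N_0$; only finitely many pairs can satisfy the relation, so there are only finitely many coincidences. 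If $\gamma = \gamma'$, positivity of $X$ and $Y$ collapses the equation to $X = Y$, i.e.\ $k - h = (b' - b)/a$, and infinitely many solutions arise precisely when $m := (b - b')/a = \langle\dir, \Lambda_0 - \Lambda_0'\rangle/\langle\dir,\dir\rangle$ is an integer.

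The case $m = 0$ forces $b = b'$, and then $\gamma = \gamma'$ reads $\lambda(C,\pi_{\Lambda_0}) = \lambda(C,\pi_{\Lambda_0'})$; this is condition~(i), together with the matched pairs $\lambda(C,\pi_{\Lambda_0 + k\dir}) = \lambda(C,\pi_{\Lambda_0' + k\dir})$ for every $k$. The case $m \in \Z\setminus\{0\}$ yields condition~(ii); the second equality in \eqref{eq3:condrara} is obtained by rewriting $\gamma = \gamma'$ as $a(c - c') = (b - b')(b + b')$ (with $c := \lambda(C,\pi_{\Lambda_0})$, $c' := \lambda(C,\pi_{\Lambda_0'})$) and translating back into inner products via the identities $b - b' = \langle\dir, \Lambda_0 - \Lambda_0'\rangle$, $b + b' = \langle\dir, \Lambda_0 + \Lambda_0' + 2\rho_G\rangle$, and $c - c' = \langle\Lambda_0 - \Lambda_0', \Lambda_0 + \Lambda_0' + 2\rho_G\rangle$. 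The precise indexing of the matched pairs is then read off directly from $X = Y$, and the disjointness hypothesis on the two strings is invoked only to exclude the degenerate situation $\Lambda_0 - \Lambda_0' \in \Z\dir$, which would make one string a shift of the other. The computation is elementary once \eqref{eq3:lambda-tira} is in hand; the only mildly delicate point I anticipate is the positivity of $b$ and $b'$, which is what selects the branch $X = Y$ over the spurious $X = -Y$ and thereby keeps the dichotomy between conditions~(i) and (ii) clean.
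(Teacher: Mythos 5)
Your argument is correct and is essentially the paper's own proof: the authors likewise complete the square (equivalently, they factor $P(x)-Q(y)$ as $a\bigl(x-y+\tfrac{b-b'}{a}\bigr)\bigl(x+y+\tfrac{b+b'}{a}\bigr)=c'-c+\tfrac{b^2-b'^2}{a}$, isolate this elementary statement as a separate lemma with the hypotheses $a\neq0$, $b,b'>0$ that you verify, and conclude by the same positivity-of-$X+Y$ and discreteness considerations). One point worth making explicit rather than leaving as "read off from $X=Y$": your derivation gives $\lambda(C,\pi_{\Lambda_0+k\dir})=\lambda(C,\pi_{\Lambda_0'+(k+m)\dir})$ for $m>0$, which agrees with the paper's auxiliary lemma but is the transpose of the indexing displayed in item (ii) of the proposition as stated (the latter appears to be a typo), so you should write out your version of the matched pairs explicitly.
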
	

\begin{proof}
Set $P(k)=\lambda(C,\pi_{k\dir+\Lambda_0})$ and $Q(k)=\lambda(C,\pi_{k\dir+\Lambda_0'})$. 
Thus, we must show that  equation $P(k) = Q(h)$ has finitely many solutions $(k,h) \in\N_0^2$, except in the cases (i) and (ii) listed in the  proposition. 

In light of \eqref{eq3:lambda-tira}, it is easy to see that the 
exceptions (i), (ii) given in the proposition immediately follow from the following lemma by letting $a=\langle \dir, \dir \rangle$, $b=\langle \dir, \Lambda_0+\rho_G\rangle $, $b'=\langle \dir, \Lambda_0'+\rho_G\rangle $, $c=\langle \Lambda_0, \Lambda_0+2\rho_G\rangle $, $c'=\langle \Lambda_0', \Lambda_0'+2\rho_G\rangle$ and thus $ b-b' = \langle \dir, \Lambda_0-\Lambda_0' \rangle$.
Observe that $c-c'= \langle \Lambda_0-\Lambda_0', \Lambda_0+\Lambda_0'+2\rho_G \rangle $ and \eqref{eq:quadratic} corresponds exactly to the last equality in (ii).
This completes the proof of the proposition.
\end{proof}

\begin{lemma}\label{lem3:coincidences}
For real numbers $a,b,b',c,c'$ with $a\neq0$ and $b,b'>0$, let $P(x)=ax^2+2bx+c$, $Q(x)=ax^2+2b'x+c'$ and $\mathcal{A}=\{(k,h)\in \N_0^2: P(k)=Q(h)\}$. 
If $b=b'$, $\mathcal A$ is infinite if and only if $c= c'$ and in this case $\mathcal A=\{(k,k):k\geq0\}$. 
If $b\neq b'$, $\mathcal A$ is infinite if and only if
\begin{equation}
m:=\frac{b-b'}{a} = \frac{c-c'}{b+b'}  \in \Z\smallsetminus\{0\};
\end{equation}
in this case $\mathcal A= \{(k,k+m):k\geq0\}$ if $m>0$ and $\mathcal A=\{(k-m), k):k\geq0\}$ if $m<0$.
\end{lemma}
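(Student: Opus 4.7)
The plan is to transform the diophantine equation $P(k) = Q(h)$ into a factored form via completion of the square, and then read off when infinitely many non-negative integer solutions occur. A direct computation shows that, with $s := k - h + (b-b')/a$ and $t := k + h + (b+b')/a$,
\[
P(k) - Q(h) \;=\; a\, s\, t \;-\; R, \qquad\text{where}\qquad R \;:=\; (c'-c) + \frac{b^{2}-b'^{2}}{a},
\]
so $P(k) = Q(h)$ is equivalent to $a s t = R$. Since $b, b' > 0$, we have $|t| \to \infty$ as $k + h \to \infty$, and $t > 0$ for all sufficiently large $k+h$.

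First I would handle the case $b = b'$. Then $s = k - h \in \Z$ and $R = c' - c$. If $c = c'$, then $ast = 0$ and, $t$ being eventually nonzero, this forces $s = 0$ on all sufficiently large solutions; an inspection of the finitely many small $k,h$ then shows $\mathcal A = \{(k,k) : k \geq 0\}$. If $c \neq c'$, any solution has $s = R/(at)$, a nonzero integer whose absolute value tends to $0$---impossible on an infinite sequence---so $\mathcal A$ is finite.

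For the case $b \neq b'$, set $m := (b-b')/a \neq 0$, so that $s$ ranges in the coset $m + \Z \subset \R$. Any infinite sequence of solutions forces $|t|\to \infty$, hence $|s| = |R|/(|a|\,|t|) \to 0$. Since the distance from $0$ to $m + \Z$ is positive unless $m \in \Z$, an infinite $\mathcal A$ requires $m \in \Z$. Given $m \in \Z$, the only integer value of $s$ compatible with $|s|$ arbitrarily small is $s = 0$, which in turn forces $R = 0$. Using $am = b - b'$, the identity $R = 0$ rewrites as $m(b+b') = c - c'$, equivalently $m = (c-c')/(b+b')$. Conversely, when $m \in \Z\smallsetminus\{0\}$ and $R = 0$, the solution set is exactly $\{s = 0\}$, i.e., $h = k + m$; membership in $\N_0^2$ yields $\mathcal A = \{(k, k+m) : k \geq 0\}$ when $m > 0$ and $\mathcal A = \{(k-m, k) : k \geq 0\}$ when $m < 0$.

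The main point to verify---really the only subtlety---is that no sporadic solutions lie outside the string $s = 0$; this is immediate from $|s|\geq 1$ for nonzero integer $s$ versus $|s|\to 0$ along any infinite sequence. The remaining sign bookkeeping, placing the parametrization in the correct orientation depending on $\mathrm{sgn}(m)$, is routine, so I do not expect a genuine obstacle beyond correctly setting up the factorization.
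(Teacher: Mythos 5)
Your proof is correct and follows essentially the same route as the paper: both rest on completing the square to get the factorization $a\bigl(x-y+\tfrac{b-b'}{a}\bigr)\bigl(x+y+\tfrac{b+b'}{a}\bigr)=c'-c+\tfrac{b^{2}-b'^{2}}{a}$ and then observe that the second factor grows without bound while the first is confined to the coset $m+\Z$, so infinitude forces $m\in\Z$ and a vanishing right-hand side. (Like the paper's own proof, your exact description of $\mathcal A$ tacitly ignores possible sporadic solutions on the line where the second factor vanishes; these can occur only when $a<0$, and in all of the paper's applications $a=\langle\dir,\dir\rangle>0$, so this is harmless.)
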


\begin{proof}
By straightforward manipulations, one checks that the equation $P(x)=Q(y)$ is equivalent to the equation $a(x+b/a)^2-b^2/a+c= a(y+b'/a)^2 -b'^2/a+c'$, and also to 
\begin{align}\label{eq:quadratic}
a\left(x-y+\frac{b-b'}{a}\right) \left(x+y+\frac{b+b'}{a}\right) &= c'-c +\frac{b^2-b'^2}{a}.
\end{align}
	
The first assertion in the lemma  is clear from the previous equation. 
When $b\neq b'$, it follows immediately that $\mathcal A$ is finite if the right-hand side is non-zero. 
Furthermore, when $b\neq b'$ and the right-hand side vanishes, it is clear that $\mathcal A$ is infinite if and only if $m=(b-b')/a  \in\Z\smallsetminus\{0\}$, in which case $P(k)=Q(k+m)$ for all $k\in\N_0$ if $m>0$ and $P(h-m)=Q(h)$ for all $h\in\N_0$ if $m<0$.
\end{proof}

\begin{remark}
In later sections we will give examples for  $G$ equal to $\SO(2n)$, $\SO(2n+1)$, $\SU(n+1)$ and $\Sp(n+1)$ of strings satisfying condition (i), showing infinitely many pairs of inequivalent representations with coincident eigenvalues (see Examples~\ref{ex4:SO(2n)unknown-cases},  \ref{ex5:unknown-casesSO(2n+1)}, \ref{ex6:SUunknowncases1}, \ref{ex7:Spunknowncases}).
	
Condition (ii) in the proposition is shown to hold in many cases for $G=\SU(n+1)$ and $\op{F}_4$, giving infinitely many pairs of inequivalent representations with coincident eigenvalues (see Examples~\ref{ex6:SUunknowncases2}, \ref{ex6:SUp-isocounterex}, \ref{ex8:F4unknowncases}).
\end{remark}

We next recall the strong multiplicity one theorem to be used in the proof of the main result. 

\begin{theorem}\label{thm3:finiteSMOTtau} \cite[Thm.~1.1]{LM-strongmultonethm}
Let $G$ be a compact connected semisimple Lie group, let $K$ be a closed subgroup of $G$, and let $\tau$ be a finite dimensional representation of $K$.
Then, for any $\Gamma,\Gamma'$ finite subgroups of $G$, if $n_\Gamma(\pi) = n_{\Gamma'}(\pi)$ for a.e.\ $\pi \in \widehat G_{\tau}$, then $\Gamma$ and $\Gamma'$ are $\tau$-representation equivalent.
\end{theorem}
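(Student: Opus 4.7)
The plan is to reduce the claim to a statement about a single string and then exploit the exponential-polynomial structure of characters along such a string. Since $\Gamma$ and $\Gamma'$ are finite, Frobenius reciprocity identifies
$$n_\Gamma(\pi) = \dim V_\pi^\Gamma = \frac{1}{|\Gamma|}\sum_{\gamma\in\Gamma}\chi_\pi(\gamma),$$
and similarly for $\Gamma'$. By the lemma preceding this theorem, $\widehat G_\tau$ decomposes as a disjoint union of strings $\String{\dir,\Lambda_0}$ with $\Lambda_0$ ranging over $\PP_\tau$. The hypothesis implies that, on each individual string, the equality $n_\Gamma(\pi_{\Lambda_0+k\dir}) = n_{\Gamma'}(\pi_{\Lambda_0+k\dir})$ fails for only finitely many $k\in\N_0$, so it suffices to prove the following rigidity along one string: if equality holds on a cofinite subset of $\N_0$, then it holds for every $k\in\N_0$.

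The key step is to show that, for every fixed $g\in G$, the map $k\mapsto \chi_{\pi_{\Lambda_0+k\dir}}(g)$ is an exponential polynomial in $k$. After conjugating $g$ into a maximal torus $T$ and writing $g=\exp H$ with $H\in\mathfrak t$, Weyl's character formula at regular $H$ reads
$$\chi_{\pi_{\Lambda_0+k\dir}}(\exp H) = \frac{\sum_{w\in W}\det(w)\, e^{w(\Lambda_0+\rho_G)(H)}\, e^{k(w\dir)(H)}}{\sum_{w\in W}\det(w)\, e^{w\rho_G(H)}},$$
whose numerator is a $W$-sum of terms of the form $c_w(H)\, r_w(H)^k$ with $r_w(H)=e^{(w\dir)(H)}$, and whose denominator is independent of $k$. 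At singular $H$ the formula becomes $0/0$, but since each $\chi_{\pi_{\Lambda_0+k\dir}}$ extends to a regular function on $G$ (a finite $\Z$-linear combination of weight exponentials on $T$), one obtains the value as a limit, and the result is still an exponential polynomial in $k$, with polynomial-in-$k$ factors appearing only when some of the frequencies $(w\dir)(H)$ coincide. Summing over $\gamma\in\Gamma$ and $\gamma'\in\Gamma'$, the difference
$$D(k) := n_\Gamma(\pi_{\Lambda_0+k\dir}) - n_{\Gamma'}(\pi_{\Lambda_0+k\dir})$$
is therefore itself an exponential polynomial in $k$.

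The conclusion then follows from the standard rigidity of exponential polynomials: a function $\sum_j P_j(k)\, r_j^k$ with distinct bases $r_j$ and polynomial coefficients $P_j$ that vanishes at all but finitely many $k\in\N_0$ must vanish identically, since the monomials $k\mapsto k^i r_j^k$ are linearly independent. Hence $D(k)\equiv 0$ on each string, and taking the union over $\Lambda_0\in\PP_\tau$ yields $n_\Gamma(\pi) = n_{\Gamma'}(\pi)$ for every $\pi\in\widehat G_\tau$, as required. I expect the main technical obstacle to be the careful handling of those $\gamma\in\Gamma\cup\Gamma'$ whose torus part $H$ is singular, where Weyl's formula degenerates; one must track the exponential-polynomial structure through the $0/0$ limit and verify that coincidences among the frequencies $(w\dir)(H)$ do not produce hidden cancellations that would spoil the uniqueness of the exponential-polynomial expansion used in the final rigidity argument.
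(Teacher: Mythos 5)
Your argument is correct, but note that the paper does not prove this statement at all: it is quoted verbatim as Theorem~1.1 of the companion paper \cite{LM-strongmultonethm}, so there is no in-paper proof to compare against. Your route — averaging characters over the finite groups, applying the Weyl character formula along a single string $\String{\dir,\Lambda_0}$ to exhibit $k\mapsto n_\Gamma(\pi_{\Lambda_0+k\dir})$ as an exponential polynomial (a quasi-polynomial of degree at most $\NN$ with bases the roots of unity $e^{(w\dir)(H_\gamma)}$), and then invoking rigidity under cofinite vanishing — is essentially the argument of that reference, as the paper itself betrays in Remark~3.8, where exactly $\NN+1$ sample points in each congruence class mod $q$ are required to pin down the string. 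Your worry about singular elements and coinciding frequencies is harmless: after grouping terms with equal bases, the representation $\sum_j P_j(k)r_j^k$ with distinct nonzero $r_j$ is unique, and a function of this form vanishing on a cofinite subset of $\N_0$ vanishes identically (e.g.\ because it satisfies a linear recurrence with invertible extreme coefficients), so no hidden cancellation can occur.
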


Now, by using Theorem~\ref{thm3:finiteSMOTtau} and Proposition~\ref{prop3:stringeigenvalues} we can prove a result on the validity of the strong representation-spectral converse for $(G,K,\tau)$, under some suitable conditions.

\begin{theorem}\label{thm3:tauiso=>tauequiv}
Let $G$ be a compact connected semisimple Lie group, let $K$ be a closed subgroup of $G$ and let $\tau$ be a finite dimensional representation of $K$.
Suppose there exist a finite subset $\PP_{\tau}$ of $\PP^{+}(G)$ and $\dir \in \PP^{+}(G)$ such that 
\begin{equation}\label{eq3:hatG_tau-finito}
\displaystyle \widehat G_{\tau} = \bigcup_{\Lambda_0\in \PP_\tau} \String{\dir, \Lambda_0} = \bigcup_{\Lambda_0\in \PP_\tau}\{\pi_ {\Lambda_0+k\dir}:k\in \N_0\},
\end{equation}
a disjoint union, and furthermore, for any pair $\Lambda_0,\Lambda_0'$ in $\PP_\tau$ conditions \textup{(i)}, \textup{(ii)} in Proposition~\ref{prop3:stringeigenvalues} do not hold unless
there is a non-negative integer $h$ such that, as $G$-modules, 
\begin{equation}\label{eq3:dual}
V_{\pi_{k\dir+{\Lambda'_0}}} \simeq
V_{\pi_{(k+h)\dir+\Lambda_0}}^*  
\quad\text{or}\quad
 V_{\pi_{(k+h)\dir+ {\Lambda'_0}}} \simeq
V_{\pi_{k\dir+\Lambda_0}}^*
\quad\text{for all $k\in \N_0$.}
\end{equation}
Then, all but finitely many  eigenvalues in $\mathcal E_\tau$ are tame, and the strong representation-spectral converse is valid for $(G,K,\tau)$.
\end{theorem}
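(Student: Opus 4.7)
My plan is to exploit the finite-string decomposition \eqref{eq3:hatG_tau-finito} in order to reduce the tameness question to a pair-by-pair analysis via Proposition~\ref{prop3:stringeigenvalues}, and then to invoke Theorem~\ref{thm3:finiteSMOTtau} to pass from ``$\tau$-isospectrality a.e.''\ to ``$\tau$-representation equivalence''. Since the union in \eqref{eq3:hatG_tau-finito} is disjoint and finite, $\mathcal E_\tau = \bigcup_{\Lambda_0\in\PP_\tau}\mathcal E(\dir,\Lambda_0)$ is a finite union, and the observation following \eqref{eq3:lambda-tira} already gives that the Casimir eigenvalues are pairwise distinct within each individual string. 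Hence non-tameness of $\lambda\in\mathcal E_\tau$ can only come from coincidences across two \emph{distinct} strings $\String{\dir,\Lambda_0}$ and $\String{\dir,\Lambda_0'}$.

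The core step is to run through the (finitely many) unordered pairs $\{\Lambda_0,\Lambda_0'\}\subset\PP_\tau$. If neither \textup{(i)} nor \textup{(ii)} of Proposition~\ref{prop3:stringeigenvalues} holds for such a pair, then $\mathcal E(\dir,\Lambda_0)\cap\mathcal E(\dir,\Lambda_0')$ is finite, and collecting the contributions over all such pairs yields a finite ``bad'' set $F\subset\mathcal E_\tau$. If \textup{(i)} or \textup{(ii)} does hold, the hypothesis of the theorem furnishes $h\geq 0$ realising \eqref{eq3:dual}; comparing Casimir eigenvalues via \eqref{eq3:dual} with the coincidence shifts described in Proposition~\ref{prop3:stringeigenvalues} forces $h=|m|$ in case \textup{(ii)} and $h=0$ in case \textup{(i)}, so that every coincident pair across these two strings consists of contragredient representations. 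Fixing $\lambda\in\mathcal E_\tau\setminus F$, I would then show that $S(\lambda):=\{\pi\in\widehat G_\tau:\lambda(C,\pi)=\lambda\}$ has cardinality at most two: a hypothetical third element $\pi_3\in S(\lambda)$ would lie in a third string and, by two applications of the previous analysis to the pairs $(\pi_1,\pi_2)$ and $(\pi_1,\pi_3)$, would satisfy $\pi_3\simeq\pi_1^*\simeq\pi_2$, contradicting the disjointness of the strings. Thus every $\lambda\in\mathcal E_\tau\setminus F$ is tame.

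Finally, for tame $\lambda=\lambda(C,\pi)$ the formulas \eqref{eq3:mult-simple}--\eqref{eq4:mult-doble}, combined with $n_\Gamma(\pi)=n_\Gamma(\pi^*)$ from Remark~\ref{rem2:Wallach}, recover $n_\Gamma(\pi)$ from the Laplace multiplicity $\mult_{\Delta_{\tau,\Gamma}}(\lambda)$. Consequently, if $\Gamma\ba G/K$ and $\Gamma'\ba G/K$ agree on all but finitely many Laplace multiplicities, then $n_\Gamma(\pi)=n_{\Gamma'}(\pi)$ for all but finitely many $\pi\in\widehat G_\tau$, and Theorem~\ref{thm3:finiteSMOTtau} delivers $\tau$-representation equivalence of $\Gamma$ and $\Gamma'$. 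I expect the main delicate point to be the parameter-matching step above: confirming that the existential $h$ supplied by \eqref{eq3:dual} must equal $|m|$ (or $0$) in the notation of Proposition~\ref{prop3:stringeigenvalues}, so that the two strings pair up cleanly into contragredient pairs and no infinite family of ``leftover'' coincidences remains unaccounted for. Once this matching is in place, the three-string collision argument and the final invocation of the strong multiplicity one theorem are essentially routine bookkeeping.
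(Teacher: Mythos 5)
Your proposal is correct and follows essentially the same route as the paper's proof: reduce tameness to a pair-by-pair analysis of eigenvalue coincidences between the finitely many strings via Proposition~\ref{prop3:stringeigenvalues}, treat the contragredient exception through the second clause of Definition~\ref{def3:tame} together with \eqref{eq4:mult-doble}, and conclude with the strong multiplicity one theorem (Theorem~\ref{thm3:finiteSMOTtau}). The only difference is that you make explicit two points the paper leaves implicit---that the shift $h$ in \eqref{eq3:dual} must agree with the shift $m$ of Proposition~\ref{prop3:stringeigenvalues} (forced because contragredient representations share Casimir eigenvalues and no other coincidences occur), and that a third string cannot collide at a non-exceptional eigenvalue without violating disjointness---both of which are correct and routine.
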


\begin{proof}
By \eqref{eq3:hatG_tau-finito}, any eigenvalue of $\Delta_{\tau,\Gamma}$ lies in  $\mathcal E_\tau= \bigcup_{\Lambda_0\in\PP_\tau} \String{\dir,\Lambda_0}$, that is, it is of the form $\lambda(C,\pi_{k\dir+\Lambda_0})$ for some $k\in\N_0$  and $\Lambda_0\in\PP_\tau$. 
We have already shown that there can be no coincidences of Casimir eigenvalues for two representations in the same string $\String{\dir,\Lambda_0}$. 
Thus, if $\lambda\in\mathcal E_\tau$ is not tame, then there are at least two representations $\pi$ and $\pi'$ in $\widehat G_{\tau}$ with $\pi\not\simeq \pi'$ and $\pi^*\not\simeq \pi'$ contributing to the multiplicity of the eigenvalue $\lambda =\lambda(C,\pi)=\lambda(C,\pi')$ in \eqref{eq2:taumultip}, with $\pi$ and $\pi'$  belonging to different strings, i.e.\ $\pi \in \String{\dir,\Lambda_0}$ and $\pi' \in \String{\dir,\Lambda_0'}$ for some $\Lambda_0\neq \Lambda_0'$ in $\PP_{\tau}$.

Since $\PP_{\tau}$ is assumed to be finite, in order to prove that all but finitely many eigenvalues in $\mathcal E_\tau$ are tame, we are left with the task of showing that there are only finitely many coincidences among the numbers in $\mathcal E(\dir,\Lambda_0)$ and $\mathcal E(\dir,\Lambda_0')$ for every $\Lambda_0\neq \Lambda_0'$ in $ \PP_\tau$, except in the case when \eqref{eq3:dual} holds, by \eqref{eq4:mult-doble}. 
Proposition~\ref{prop3:stringeigenvalues} tells us that $\mathcal E(\dir,\Lambda_0)\cap \mathcal E(\dir,\Lambda_0')$ cannot have  infinitely many elements since by the hypotheses the conditions (i)-(ii) are not satisfied. 
This proves that all but finitely many eigenvalues in $\mathcal E_\tau$ are tame.

We now show the validity of the strong representation-spectral converse.
Let $\Gamma$ and $\Gamma'$ be finite subgroups of $G$ such that $\mult_{\Delta_{\tau,\Gamma}}(\lambda)= \mult_{\Delta_{\tau,\Gamma'}}(\lambda)$ for all but, possibly, finitely many eigenvalues $\lambda \in \mathcal E_\tau$. 
Thus, this coincidence holds a.e.\ for the set of tame eigenvalues, which has a finite complement. 
Hence, by \eqref{eq4:mult-doble}, $n_\Gamma (\pi)=n_{\Gamma'} (\pi)$ for a.e.\ $\pi \in \widehat G_\tau$ and then $n_\Gamma (\pi)=n_{\Gamma'} (\pi)$ for every $\pi\in\widehat G_\tau$  by Theorem~\ref{thm3:finiteSMOTtau}. 
Consequently, $\Gamma$ and $\Gamma'$ are $\tau$-representation equivalent in $G$.
\end{proof}

One can actually give a refinement of the previous theorem showing that, for any finite subgroup $\Gamma$ of $G$, an adequate finite part of the spectrum of $\Delta_{\tau,\Gamma}$ determines the whole spectrum. 
In particular, given two finite subgroups $\Gamma$ and $\Gamma'$ of $G$,  coincidence of finitely many multiplicities of eigenvalues  of $\Delta_{\tau,\Gamma}$ and $\Delta_{\tau,\Gamma'}$ implies representation equivalence and hence, the validity of the strong representation-spectral converse. 
The details are {given} in the next remark and are based in the application of \cite[Thm.~1.2]{LM-strongmultonethm}, which is a refinement of Theorem~\ref{thm3:finiteSMOTtau}.

\begin{remark} 
Assume $G$, $K$ and $\tau$ are as in Theorem 3.7. 
Let $q$ be a positive integer and let $\mathcal F_{\tau}$ be a finite subset of tame elements in $\mathcal E_{\tau}$ such that 
\begin{equation}\label{eq3:finiteeigenvalues}
|\{k\in \N_0: \lambda(C, \pi_{\Lambda_0+k\dir}) \in \mathcal F_{\tau} \} \cap (j+q\Z)| \geq \NN+1 
\quad\text{for all $j\in\Z$ and $\Lambda_0\in\mathcal P_\tau$}.
\end{equation}
We claim that, for any finite subgroup $\Gamma$ of $G$ such that $|\Gamma|$ divides $q$, the finite part of the spectrum of $\Delta_{\tau,\Gamma}$ associated to $\mathcal F_{\tau}$ (i.e.\ the set of $(\lambda, \mult_{\Delta_{\tau, \Gamma}}(\lambda))$ with $\lambda\in\mathcal F_{\tau}$) determines the multiplicities $n_{\Gamma}(\pi)$ for all $\pi\in\widehat G_\tau$ and hence the whole spectrum of $\Delta_{\tau,\Gamma}$ is determined.

Set $\widehat F_\tau := \{\pi \in\widehat G_\tau:\lambda(C,\pi) \in \mathcal F_{\tau}\}.$ 
Let $\Gamma$ be a finite subgroup of $G$ such that $|\Gamma|$ divides $q$.
For any $\lambda=\lambda(C,\pi) \in\mathcal F_{\tau}$, by \eqref{eq3:mult-simple} and \eqref{eq4:mult-doble}, $n_{\Gamma}(\pi)$ is determined by $\mult_{\Delta_{\tau, \Gamma}}(\lambda)$. 
Consequently, the finite set $\{(\lambda, \mult_{\Delta_{\tau, \Gamma}}(\lambda)) : \lambda\in\mathcal F_{\tau}  \}$ determines the $n_{\Gamma}(\pi)$ for all $\pi\in\widehat F_\tau$. 
Now, \eqref{eq3:hatG_tau-finito} and \eqref{eq3:finiteeigenvalues} ensure that $\widehat F_\tau$ satisfies the assumptions in \cite[Thm.~1.2]{LM-strongmultonethm}.
Hence, the $n_{\Gamma}(\pi)$ for all $\pi\in\widehat G_\tau$ are determined, and therefore also all of the spectrum of $\Delta_{\tau,\Gamma}$. 
Thus, given $\Gamma$, $\Gamma'$ with $|\Gamma|$, $|\Gamma'|$ dividing $q$, the coincidence of  $(\lambda, \mult_{\Delta_{\tau, \Gamma}}(\lambda))$ for $\lambda\in\mathcal F_{\tau}$ implies their coincidence for all $\lambda$, hence the strong representation converse is valid for $(G,K,\tau)$.
\end{remark}

\begin{remark}
	We now observe a problem that is similar  to the representation-spectral converse where Theorem~\ref{thm3:finiteSMOTtau} could be applied. 
	The well-known Sunada method \cite{Sunada85} and its generalization by DeTurck and Gordon~\cite{DeTurckGordon89} produce \emph{strongly isospectral} manifolds, that is, manifolds isospectral with respect to any strongly elliptic natural differential operator acting on sections of a natural bundle. 
	Consequently, if $\Gamma$ and $\Gamma'$ are representation equivalent subgroups of $G$, then $\Gamma\ba M$ and $\Gamma'\ba M$ are $\tau$-isospectral for every $\tau\in\widehat K$. 
	
	The converse was proved in \cite{Pesce95} for $S^n$ and $H^n$ and in \cite{Lauret-strongequivflat} for $\R^n$. 
	It would be of interest to know whether the converse also holds for $P^n(\C)$, $P^n(\Hy)$, and $P^2(\mathbb O)$ by using similar tools as in this article.
	More precisely, for $(G,K)$ equals to $(\SU(n+1),\textrm{S}(\U(n)\times \U(1)))$ or $(\Sp(n+1),\Sp(n)\times\Sp(1))$, and $\Gamma,\Gamma'$ finite subgroups of $G$ satisfying that $\Gamma\ba G/K$ and $\Gamma'\ba G/K$ are $\tau$-isospectral for all $\tau\in\widehat K$, whether $\Gamma$ and $\Gamma'$ are necessarily representation equivalent in $G$, that is, $n_{\Gamma}(\pi)=n_{\Gamma'}(\pi)$ for all $\pi\in \widehat G$.
\end{remark}

\section{Odd dimensional spheres}
\label{sec:oddspheres}
In the rest of this paper, we will give applications of Theorem~\ref{thm3:tauiso=>tauequiv} for each irreducible compact symmetric space $G/K$ of real rank one.
In our study we will find increasing difficulties, mainly due to the fact that the branching formulas in some cases become more intricate and harder to apply.  

Relative to  the $p$-form spectrum, it often involves the contribution of many different irreducible representations, that should be treated separately and do not always behave in the same way.

We begin our case-by-case study by considering odd-dimensional spheres.
Throughout this section, for any $n\geq2$, let $G=\SO(2n)$ and let $K$ be the subgroup $\SO(2n-1)$ embedded in the upper left-hand block in $G$. 
We have that $G/K$ is diffeomorphic to $S^{2n-1}$. 
We consider the metric induced by the inner product $\langle X,Y\rangle = -\tfrac{1}{2} \tr(XY)$ on $\mathfrak g$, which is a negative multiple of the Killing form. 
This metric has constant curvature one.

We pick the maximal torus of $G$ given by
$$
T=\{\diag (R(\theta_1),\dots,R(\theta_n)): \theta_j\in\R \; \forall\, j\}, 
$$
where $R(\theta)=\left(\begin{smallmatrix}\cos\theta & \sin\theta\\ -\sin\theta & \cos\theta \end{smallmatrix}\right)$. 
Thus, every element in $\mathfrak t_\C$ has the form 
$$
X= \diag\left( \left(\begin{smallmatrix}0& i\theta_1 \\ -i\theta_1& 0\end{smallmatrix} \right),\dots,\left( \begin{smallmatrix}0& i\theta_n \\ -i\theta_n& 0\end{smallmatrix} \right) \right) 
$$
with $\theta_j\in\C$ for all $j$. 
Let $\varepsilon_j\in \mathfrak t_\C^*$ given by $\varepsilon_j(X)=\theta_j$ for $X$ as above. 
One has that $\Phi(\mathfrak g_\C,\mathfrak t_\C)= \{\pm \varepsilon_i\pm\varepsilon_j: 1\leq i<j\leq n\}$, $\PP(G)=\bigoplus_{j=1}^n\Z\varepsilon_j$ and $\langle \varepsilon_i,\varepsilon_j\rangle =\delta_{i,j}$.
Furthermore, if we take the lexicographic order on $\mathfrak h^*$ with respect to the basis $\{\varepsilon_j : 1\le j\le n\}$, the simple roots are $\{\varepsilon_1-\varepsilon_{2},\dots,\varepsilon_{n-1}- \varepsilon_{n}, \varepsilon_{n-1}+\varepsilon_n\}$, $\Phi^+(\mathfrak g_\C,\mathfrak t_\C)= \{\varepsilon_i\pm\varepsilon_j: 1\leq i<j\leq n\}$, and furthermore $\PP^{+}(G) = \{\sum_{j=1}^n a_j\varepsilon_j\in \PP(G): a_1\geq\dots\geq a_{n-1}\geq |a_n|\}$.

We pick in $K$ the maximal torus $T\cap K$, thus the associated Cartan subalgebra is $\mathfrak t_\C\cap \mathfrak k_\C$ and $\{\varepsilon_1,\dots,\varepsilon_{n-1}\}$ is a basis of $(\mathfrak t_\C\cap \mathfrak k_\C)^*$. 
Furthermore, the order chosen above gives $\Phi^{+}(\mathfrak k_\C,\mathfrak t_\C\cap\mathfrak k_\C)=\{\varepsilon_i\pm\varepsilon_{j}:1\leq i<j\leq n-1\}\cup \{\varepsilon_i:1\leq i\leq n-1\}$, simple roots $\{\varepsilon_1-\varepsilon_2, \dots, \varepsilon_{n-1}-\varepsilon_{n},\varepsilon_n\}$, and $\PP^{+}(K) = \{\sum_{j=1}^{n-1} b_j\varepsilon_j\in \bigoplus_{j=1}^{n-1}\Z\varepsilon_j: b_1\geq\dots\geq b_{n-1}\geq0\}$. 

We  recall the well-known branching law in the present case (see for instance \cite[Thm.~8.1.4]{GoodmanWallach-bookSpringer} and \cite[Thm.~9.16]{Knapp-book-beyond}). 
If $\tau_\mu \in\widehat K$ and $\pi_\Lambda \in\widehat G$  have highest weights $\mu=\sum_{j=1}^{n-1} b_j\varepsilon_j\in\PP^{+}(K)$, $\Lambda= \sum_{j=1}^{n} a_j \varepsilon_j \in\PP^{+}(G)$ respectively, then $\tau_\mu$ occurs in the decomposition of $\pi|_K$ if and only if
\begin{equation}\label{eq4:SO(2n)entrelazamiento}
a_1\geq b_1\geq a_2\geq b_2\geq \dots \geq a_{n-1}\geq b_{n-1}\geq |a_n|\geq 0.
\end{equation}
Furthermore, when this is the case, then $\dim \Hom_K(\tau_\mu,\pi_{\Lambda})=1$.
As an immediate consequence we obtain the following description of $\widehat G_\tau$ for any $\tau\in\widehat K$.

\begin{lemma}\label{lem4:hatG_tauSO(2n)}
For any $\mu=\sum_{j=1}^{n-1} b_j\varepsilon_j\in \PP^{+}(K)$, we have that 
\begin{equation}\label{eq4:hatSO(2n)_tau}
\widehat G_{\tau_\mu} 
= \bigcup_{\Lambda_0\in \PP_{\tau_\mu}} \String{\varepsilon_1,\Lambda_0}
= \bigcup_{\Lambda_0\in \PP_{\tau_\mu}} \{\pi_ {\Lambda_0+k\varepsilon_1}:k\in \N_0\},
\end{equation}
where
\begin{equation}\label{eq4:P_tauSO(2n)}
\PP_{\tau_\mu} = \left\{ \sum_{j=1}^{n} a_j\varepsilon_j :  a_j\in\Z\;\forall j,\, a_1=b_1,\; \textrm{(\ref{eq4:SO(2n)entrelazamiento}) holds} \right\}.
\end{equation}
Consequently, $\widehat G_{\tau_\mu}$ is a finite disjoint union of strings with  direction $\dir = \varepsilon_1$.
\end{lemma}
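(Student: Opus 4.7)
The plan is to read off both the string decomposition and the parametrization of its bases directly from the branching law (4.4), and then verify disjointness and finiteness of $\PP_{\tau_\mu}$ by elementary inequalities.

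First I would note that $\dir = \varepsilon_1$ does belong to $\PP^+(G)$, since the tuple $(1,0,\dots,0)$ trivially satisfies $a_1 \geq a_2 \geq \dots \geq a_{n-1} \geq |a_n|$. By \eqref{eq4:SO(2n)entrelazamiento}, a representation $\pi_\Lambda$ with $\Lambda = \sum_{j=1}^n a_j \varepsilon_j$ lies in $\widehat G_{\tau_\mu}$ if and only if
$$
a_1 \geq b_1 \geq a_2 \geq b_2 \geq \dots \geq a_{n-1} \geq b_{n-1} \geq |a_n| \geq 0,
$$
and in that case $[\tau_\mu : \pi_\Lambda|_K] = 1$. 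The key observation is that shifting the first coordinate $a_1 \mapsto a_1 + k$ (with $k \in \N_0$) preserves this chain of inequalities, since the only condition involving $a_1$ is $a_1 \geq b_1$, which is monotone in $a_1$. Conversely, every $\Lambda \in \widehat G_{\tau_\mu}$ can be written uniquely as $\Lambda_0 + k\varepsilon_1$ with $k = a_1 - b_1 \in \N_0$ and $\Lambda_0$ having first coordinate equal to $b_1$ and remaining coordinates equal to $(a_2, \dots, a_n)$.

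Next I would verify that this decomposition is a \emph{disjoint} union indexed by the set $\PP_{\tau_\mu}$ of \eqref{eq4:P_tauSO(2n)}. Disjointness is immediate: two distinct base weights $\Lambda_0, \Lambda_0' \in \PP_{\tau_\mu}$ both have first coordinate equal to $b_1$, so their strings $\{\Lambda_0 + k\varepsilon_1\}$ and $\{\Lambda_0' + k\varepsilon_1\}$ can only overlap if $\Lambda_0 - \Lambda_0'$ is a nonzero multiple of $\varepsilon_1$, which contradicts the equality of first coordinates.

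Finally, finiteness of $\PP_{\tau_\mu}$ follows because after fixing $a_1 = b_1$, the chain \eqref{eq4:SO(2n)entrelazamiento} forces
$$
b_j \leq a_{j+1} \leq b_{j-1} \quad (1 \leq j \leq n-2), \qquad -b_{n-1} \leq a_n \leq b_{n-1},
$$
giving only finitely many choices for $(a_2, \dots, a_n) \in \Z^{n-1}$. I do not anticipate any real obstacle here; the only subtlety is a notational one, namely being careful that in \eqref{eq4:P_tauSO(2n)} the first coordinate is pinned to $b_1$ (rather than only required to be $\geq b_1$) in order to produce \emph{disjoint} strings in the sense of the decomposition \eqref{eq:unionstrings} from the general lemma.
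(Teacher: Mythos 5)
Your proposal is correct and follows essentially the same route as the paper, which presents this lemma as an immediate consequence of the classical branching law \eqref{eq4:SO(2n)entrelazamiento}: the only constraint on $a_1$ is $a_1\geq b_1$, so pinning $a_1=b_1$ yields the (finitely many, pairwise disjoint) string bases and shifting by $k\varepsilon_1$ sweeps out each string. The only blemish is a harmless index slip in your finiteness step (the bound should read $b_{j+1}\leq a_{j+1}\leq b_j$ for $1\leq j\leq n-2$), which does not affect the argument.
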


Next, we will apply Theorem~\ref{thm3:tauiso=>tauequiv} by using the above choices of $\PP_{\tau_\mu}$ and $\dir$.
We first note that condition (ii)  in Proposition~\ref{prop3:stringeigenvalues} is never a problem in this case since $\langle \dir,\Lambda_0 \rangle = \langle \dir,\Lambda_0' \rangle$ for all $\Lambda_0,\Lambda_0'\in\mathcal P_{\tau_{\mu}}$.
It remains to find conditions on $\mu$ so that $\lambda(C,\pi_{\Lambda_0}) \neq \lambda(C,\pi_{\Lambda_0'}) $ for all $\Lambda_0\neq \Lambda_0'$ in $\mathcal P_{\tau_{\mu}}$. 

Let $\Lambda_0=\sum_{i=1}^{n} a_i \varepsilon_i\in \PP_{\tau_\mu}$.
By \eqref{eq3:lambda(C,pi)}, since $\rho_G=\sum_{i=1}^{n}(n-i)\varepsilon_i$, we have that
\begin{align}\label{eq4:lambda(C,pi_Lambda_0)}
	\lambda(C,\pi_{\Lambda_0}) 
	&= \langle \Lambda_0,\Lambda_0+2\rho\rangle  = b_1(b_1+2(n-1)) + \sum_{i=2}^n a_i(a_i+2(n-i)). 
\end{align}
Therefore, $\lambda(C,\pi_{\Lambda_0}) \neq \lambda(C,\pi_{\Lambda_0'}) $ if and only if $\sum_{i=2}^n a_i(a_i+2(n-i))\neq \sum_{i=2}^n a_i'(a_i'+2(n-i))$.

We now check whether \eqref{eq3:dual} can hold.
It {cannot happen when} $n$ is even since every irreducible representation of $\SO(2n)$ is self-conjugate 
(see for instance \cite[VI.(5.5)(ix)]{BrockerDieck}).
Assume that $n$ is odd. 
For
$\Lambda=\sum_{i=1}^{n} a_i\varepsilon_i \in \PP^{+}(G)$, let $\overline{\Lambda} =\sum_{i=1}^{n-1} a_i\varepsilon_i -a_n\varepsilon_n$, which also lies in $\PP^{+}(G)$. 
Then, $\pi_{\Lambda}^*\simeq \pi_{\overline{\Lambda}}$ for all $\Lambda \in \PP^{+}(G)$ (see for instance \cite[VI.(5.5)(x)]{BrockerDieck}). 
Hence, if $n$ is odd,  \eqref{eq3:dual} holds for $\Lambda_0$, $\Lambda_0'$ if and only if $\Lambda_0'=\overline{\Lambda_0}$ and $a_n\neq 0$.

We thus obtain the following result, as a consequence of Theorem~\ref{thm3:tauiso=>tauequiv}. 

\begin{theorem}\label{thm4:formSO(2n)SO(2n-1)}
	Let $G=\SO(2n)$, $K=\SO(2n-1)$, and $\tau_\mu\in\widehat K$ with $\mu= \sum_{i=1}^{n-1} b_i\varepsilon_i\in \PP^{+}(K)$. 
	Assume the form 
	\begin{equation}\label{eq4:formSO(2n)SO(2n-1)}
	(a_2,\dots,a_n) \longmapsto \sum_{i=2}^{n} a_i(a_i+2(n-i)) 
	\end{equation}
	represents different numbers on the set 
	\begin{equation}\label{eq4:T_GmuSO(2n)}
	\TT_{\SO(2n),\mu} :=
	\begin{cases}
	\{(a_2,\dots,a_n) \in\Z^{n-1}: b_1\geq a_2\geq b_2\geq \dots\geq b_{n-1}\geq |a_n|\}
	& \text{if $n$ is even,}\\ 
	\{(a_2,\dots,a_n) \in\Z^{n-1}: b_1\geq a_2\geq b_2\geq \dots\geq b_{n-1}\geq a_n\geq0\}
	& \text{if $n$ is odd.}\\ 
	\end{cases}
	\end{equation} 
Then, the strong representation-spectral converse is valid for $(\SO(2n),\SO(2n-1),\tau_\mu)$.
\end{theorem}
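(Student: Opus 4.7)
The proof is a direct application of Theorem~\ref{thm3:tauiso=>tauequiv}. Lemma~\ref{lem4:hatG_tauSO(2n)} already provides the required finite disjoint decomposition of $\widehat G_{\tau_\mu}$ as a union of strings with common direction $\dir = \varepsilon_1$, so the task reduces to verifying that for every pair $\Lambda_0 \neq \Lambda_0'$ in $\PP_{\tau_\mu}$, neither of the conditions \textup{(i)}, \textup{(ii)} of Proposition~\ref{prop3:stringeigenvalues} can hold unless the self-duality relation~\eqref{eq3:dual} is satisfied.

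Condition~\textup{(ii)} is excluded immediately: every element of $\PP_{\tau_\mu}$ has first coordinate $a_1 = b_1$ by~\eqref{eq4:P_tauSO(2n)}, so $\langle \dir,\Lambda_0\rangle = b_1 = \langle\dir,\Lambda_0'\rangle$. For condition~\textup{(i)}, formula~\eqref{eq4:lambda(C,pi_Lambda_0)} decomposes the Casimir eigenvalue as the constant $b_1(b_1 + 2(n-1))$ plus the value of the form~\eqref{eq4:formSO(2n)SO(2n-1)} at the tail $(a_2,\dots,a_n)$, so $\lambda(C,\pi_{\Lambda_0}) = \lambda(C,\pi_{\Lambda_0'})$ is equivalent to equality of this form on the two corresponding tails.

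The key step is to line up equal form values with the contragredient exception in~\eqref{eq3:dual}. When $n$ is odd one has $\pi_\Lambda^* \simeq \pi_{\overline{\Lambda}}$, where $\overline{\Lambda}$ flips the sign of $a_n$; since $\dir = \varepsilon_1$ is preserved by this involution, if $\Lambda_0' = \overline{\Lambda_0}$ then~\eqref{eq3:dual} holds with $h = 0$. The form~\eqref{eq4:formSO(2n)SO(2n-1)} depends on $a_n$ only through $a_n^2$, so it cannot possibly distinguish $\Lambda_0$ from $\overline{\Lambda_0}$; accordingly, for $n$ odd the set $\mathcal T_{\SO(2n),\mu}$ is restricted to $a_n \geq 0$, retaining one representative per pair $\{\Lambda_0,\overline{\Lambda_0}\}$. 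For $n$ even every irreducible representation of $\SO(2n)$ is self-conjugate, so the exception reduces to $\Lambda_0 = \Lambda_0'$ (with $h = 0$) and $\mathcal T_{\SO(2n),\mu}$ retains the full parameter range. Under the hypothesized injectivity of the form on $\mathcal T_{\SO(2n),\mu}$, any coincidence of Casimir values within $\PP_{\tau_\mu}$ must reduce either to $\Lambda_0 = \Lambda_0'$ or to $\Lambda_0' = \overline{\Lambda_0}$ (only for $n$ odd), and in both cases~\eqref{eq3:dual} is in force. Theorem~\ref{thm3:tauiso=>tauequiv} then yields the strong representation-spectral converse.

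The main obstacle is not a hard calculation but a careful bookkeeping between the branching parametrization $\PP_{\tau_\mu}$ and the contragredient involution on $\widehat G$, arranged so that ``injectivity of the form on $\mathcal T_{\SO(2n),\mu}$'' captures precisely the coincidences not already absorbed by~\eqref{eq3:dual}. Once this reconciliation is in place the verification of the hypotheses of Theorem~\ref{thm3:tauiso=>tauequiv} is essentially routine.
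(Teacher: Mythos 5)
Your proposal is correct and follows essentially the same route as the paper: the authors also derive the theorem directly from Theorem~\ref{thm3:tauiso=>tauequiv}, ruling out condition~(ii) via $\langle\dir,\Lambda_0\rangle=b_1$ for all $\Lambda_0\in\PP_{\tau_\mu}$, reducing condition~(i) to injectivity of the form~\eqref{eq4:formSO(2n)SO(2n-1)} on the tails, and disposing of the exception~\eqref{eq3:dual} by self-conjugacy for $n$ even and by the involution $\Lambda\mapsto\overline{\Lambda}$ (which motivates the restriction $a_n\geq 0$ in $\TT_{\SO(2n),\mu}$) for $n$ odd. No gaps.
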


\begin{remark}\label{rem4:b_n-1=0}
We note that if the assumption in Theorem~\ref{thm4:formSO(2n)SO(2n-1)} holds and $n$ is even, then necessarily $b_{n-1}=0$. 
Indeed, if $b_{n-1}>0$, then the form \eqref{eq4:formSO(2n)SO(2n-1)} represents the same number in the elements $(b_2,\dots,b_{n-1},1)$ and $(b_2,\dots,b_{n-1},-1)$, which lie in $\mathcal T_{G,\mu}$,  and consequently the hypotheses are not satisfied. 
\end{remark}

In \cite[Prop.~3.3]{LMR-repequiv}, the representation-spectral converse was proved for $(\Ot(2n),\Ot(2n-1),\tau)$ for every $\tau\in\widehat {\Ot(2n-1)}$ such that  $\tau|_{\SO(2n-1)}$ has highest weight $\mu= \sum_{i=1}^{n-1} b_i\varepsilon_i\in \PP^{+}(K)$ satisfying that $b_1\leq 2$.
The same proof works for $(\SO(2n),\SO(2n-1),\tau_\mu)$ for every $\mu= \sum_{i=1}^{n-1} b_i\varepsilon_i\in \PP^{+}(K)$ such that $b_1\leq 2$ and $b_{n-1}=0$.
In particular, this gives a finite set of $\tau\in\widehat K$ for which the converse was known.

Now, Theorem~\ref{thm4:formSO(2n)SO(2n-1)} provides infinitely many $\tau_\mu \in\widehat K$ {for which} the representation-spectral converse is valid for $(\SO(2n),\SO(2n-1),\tau_\mu)$, enlarging  the finite set of cases already known.
The next corollary exemplifies this fact by giving  explicit choices of the highest weights $\mu$.
Roughly speaking, in the decreasing sequence of coefficients of the highest weight $\mu = \sum_{i=1}^{n-1} b_i\varepsilon_i$, we may allow three jumps of length one, one jump of arbitrary length, or else, an arbitrary first jump followed by two jumps of length one.

\begin{corollary}\label{cor4:SO(2n)SO(2n-1)situations}
	Let $G=\SO(2n)$, $K=\SO(2n-1)$, and $\tau_\mu\in\widehat K$ with $\mu= \sum_{i=1}^{n-1} b_i\varepsilon_i\in \PP^{+}(K)$.
	Assume $b_{n-1}=0$ if $n$ is even, and  in addition, {any} one of the following conditions is satisfied 
\begin{enumerate} \renewcommand{\labelenumi}{(\roman{enumi})}

\item\label{item4:b_1<=3} $b_1\leq 3$;
		
\item\label{item4:onejump} 
there is $2\leq j\leq n-1$ such that $b_i-b_{i+1}=0$ for all $1\leq i\leq n-1$, $i\neq j$, and $b_n=0$; 
		
\item\label{item4:b_2leq2} $b_2\leq 2$ and $b_1$ arbitrary. 
\end{enumerate}
Then, the strong representation-spectral converse is valid for $(\SO(2n),\SO(2n-1),\tau_\mu)$.
\end{corollary}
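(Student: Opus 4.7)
The plan is to invoke Theorem~\ref{thm4:formSO(2n)SO(2n-1)} and verify, in each of the three cases, that the quadratic form
\[
Q(a_2,\dots,a_n) := \sum_{i=2}^n a_i\bigl(a_i+2(n-i)\bigr)
\]
takes distinct values on $\mathcal T_{\SO(2n),\mu}$. I would first pass to the shifted coordinates $\ell_i := a_i + (n-i)$, so that $Q + \sum_{i=2}^n(n-i)^2 = \sum_{i=2}^n \ell_i^2$ and the interlacing inequalities $b_{i-1}\geq a_i\geq b_i$ become $\ell_i\in I_i := [\,b_i+n-i,\; b_{i-1}+n-i\,]$. A short calculation gives $\min I_i = \max I_{i+1}+1$, so the $I_i$ are pairwise disjoint adjacent integer intervals; hence the admissible set becomes the Cartesian product $\prod_{i=2}^n I_i$ (with $\ell_n\geq 0$ automatic from the hypothesis $b_{n-1}=0$ for $n$ even, or from $a_n\geq 0$ for $n$ odd). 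Moreover $\sum_{i=2}^n(|I_i|-1)=b_1$, so the number of active indices (those with $|I_i|>1$) and their total excess length are controlled by the jumps of $b_1\geq b_2\geq\dots\geq b_{n-1}\geq 0$.

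Next I would treat each case. Case~(ii) is immediate: only one coordinate, say $a_{i_0}$, is free, and $a_{i_0}(a_{i_0}+2(n-i_0))$ is strictly increasing for $a_{i_0}\geq 0$, so $Q$ is injective. For case~(i) the bound $b_1\leq 3$ gives at most three active coordinates, each in an interval of length at most four, so a finite case analysis on the number of active intervals suffices; the two- and three-active-coordinate cases reduce to ruling out identities of the form $\ell^2+\ell'^2=\tilde\ell^2+\tilde\ell'^2$ or $\sum_{i=1}^3 \ell_i^2 = \sum_{i=1}^3 \tilde\ell_i^2$ with distinct unordered tuples inside the adjacent disjoint intervals, which follows from the separation $\min I_i\geq \max I_{i+1}+1$ combined with a parity argument on the odd increments $2\ell+1$. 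For case~(iii), $b_2\leq 2$ yields $\sum_{i\geq 3}(|I_i|-1)\leq 2$, so only $\ell_2$ can range over a long interval while at most two of $\ell_3,\dots,\ell_n$ are active in intervals of length at most three. Decomposing $Q=\ell_2^2+R$ with $R=\sum_{i\geq 3}\ell_i^2$, a collision with $\ell_2\neq\ell_2'$ would force the factorization $(\ell_2-\ell_2')(\ell_2+\ell_2')=R'-R$; since $\ell_2+\ell_2'\geq 2(b_2+n-2)$ and the finite list of possible values of $R'-R$ consists of explicit expressions in $n$ and the active indices with controlled magnitude and parity, one rules out every admissible factorization by a short magnitude-plus-parity comparison. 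Once $\ell_2=\ell_2'$, the residual sub-problem for $(\ell_3,\dots,\ell_n)$ reduces to a configuration already handled in~(i).

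The main obstacle I expect is ruling out Pythagorean-type coincidences such as $3^2+4^2=5^2+0^2$, which would collapse Casimir eigenvalues coming from different strings $\String{\dir,\Lambda_0}$ and $\String{\dir,\Lambda_0'}$ and thereby violate the tameness hypothesis of Theorem~\ref{thm3:tauiso=>tauequiv}. The numerical conditions (i), (ii), (iii) are calibrated precisely so that the admissible intervals $I_i$ are either too short or too well-separated for such coincidences to occur; carrying out this case analysis rigorously is the technical core of the argument. Once injectivity of $Q$ on $\mathcal T_{\SO(2n),\mu}$ has been established in all three cases, Theorem~\ref{thm4:formSO(2n)SO(2n-1)} immediately delivers the strong representation-spectral converse for $(\SO(2n),\SO(2n-1),\tau_\mu)$.
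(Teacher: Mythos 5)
Your proposal is correct and takes essentially the same route as the paper: both reduce to Theorem~\ref{thm4:formSO(2n)SO(2n-1)} and verify injectivity of the form on $\TT_{\SO(2n),\mu}$ by isolating the few coordinates not pinned by the interlacing inequalities and ruling out cancellations by a magnitude-plus-parity comparison, your completed-square factor $(\ell_i-\ell_i')(\ell_i+\ell_i')$ being exactly the paper's factor $(a_i-a_i')(a_i+a_i'+2(n-i))$. The paper likewise writes out the details only for case (iii) and leaves (i) and (ii) to the reader.
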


\begin{proof}
	The entire proof is straightforward, based on showing that the assumption in Theorem~\ref{thm4:formSO(2n)SO(2n-1)} holds, i.e.\ that the form \eqref{eq4:formSO(2n)SO(2n-1)} represents different numbers on the corresponding set $\TT_{G,\mu}$. 
	We only give some details for case (iii). 
	The other cases are very similar and left to the reader.
	
	Assume $b_2=2$.
	Thus, there are indices $1\leq j\leq k\leq n-1$ such that \begin{equation*}
		b_1\geq b_2 =\dots=b_{j-1}=2 >b_{j} = \dots= b_{k-1}=1>b_{k}=\dots = b_{n-1}=0. 
	\end{equation*}
	We give the details when $j<k$. 
	Let $(a_2,\dots,a_n)$ and $(a_2',\dots,a_n')$ in $\TT_{G,\mu}$, thus $a_i=a_i'=2$ for all $3\leq i\leq j-1$, $a_i=a_i'=1$ for all $j+1\leq i\leq k-1$, $a_i=a_i'=0$ for all $k+1\leq i\leq n$, and furthermore, $b_1\geq a_2,a_2'\geq 2\geq a_j,a_j'\geq 1\geq a_k,a_k'\geq 0$. 
	Hence, if $(a_2,\dots,a_n)$ and $(a_2',\dots,a_n')$ represent the same number under \eqref{eq4:formSO(2n)SO(2n-1)}, we obtain that 
	\begin{align*}
		0 = 
		\sum_{i=2}^{n} \big( a_i(a_i+2(n-i)) -a_i'(a_i'+2(n-i)) \big)
		=\sum_{i\in\{2,j,k\}} (a_i-a_i') \big( a_i+a_i'+2(n-i)\big).
	\end{align*}
	It remains to show that $a_i=a_i'$ for any $i\in \{2,j,k\}$. 
	One can easily check that if $a_i=a_i'$ for some $i\in \{2,j,k\}$, then $a_i=a_i'$ for all $i$. 
	Thus, we assume $a_i\neq a_i'$ for all $i\in \{2,j,k\}$.
	Suppose $a_2>a_2'$, thus $a_{j}-a_{j}'=a_{k}-a_{k}'=-1$, that is, $a_j'=2$, $a_j=1$, $a_k'=1$ and $a_k=0$. 
	Hence 
	\begin{equation*}
		(a_2-a_2')(a_2+a_2'+2(n-2)) = 3+2(n-j) + 1+2(n-k).
	\end{equation*}
	Since the right-hand side is even, it follows that $a_2\pm a_2'$ is also even, in particular $a_2-a_2'\geq 2$. 
	This implies that the left-hand side is strictly bigger than the right-hand side, a contradiction.
\end{proof}

We next show {sets of} disjoint strings $\String{\dir,\Lambda_0}$ and $\String{\dir,\Lambda_0'}$ having the same set of eigenvalues, together with $\tau_\mu\in\widehat K$, such that $\Lambda_0,\Lambda_0'\in\mathcal P_{\tau_\mu}$. Furthermore,   $\mu$ has $b_1=4$, which tells us that the upper bound in condition (i) of Corollary~\ref{cor4:SO(2n)SO(2n-1)situations} is optimal.
Hence, in these cases, the corresponding eigenvalues are not tame and we cannot obtain the strong representation-spectral converse as a consequence of Theorem~\ref{thm3:tauiso=>tauequiv}.

\begin{example}\label{ex4:SO(2n)unknown-cases}
We first consider $n=3$. 
Let $\Lambda_0= 4\varepsilon_1+4\varepsilon_2$ and $\Lambda_0'= 4\varepsilon_1+3\varepsilon_2+3\varepsilon_3$.
{We have that} $\lambda(C,\pi_{k\varepsilon_1+\Lambda_0}) = \lambda(C,\pi_{k\varepsilon_1+\Lambda_0'})$ for all $k\in\N_0$ since condition (i) in Proposition~\ref{prop3:stringeigenvalues} holds, {given that}  $\langle \dir,\Lambda_0\rangle = \langle \dir,\Lambda_0'\rangle= 4$ and $\dir=\varepsilon_1$. 
Furthermore, one can easily check that $\lambda(C,\pi_{\Lambda_0}) = \lambda(C,\pi_{\Lambda_0'}) = 56$ by \eqref{eq4:lambda(C,pi_Lambda_0)}, which proves that $\mathcal E(\dir,\Lambda_0)= \mathcal E(\dir,\Lambda_0')$. 

This coincidence between the Casimir eigenvalues of the strings $\String{\dir,\Lambda_0}$ and $\String{\dir,\Lambda_0'}$ {produces} infinitely many non-tame eigenvalues of $\Delta_{\tau,\Gamma}$ for some finite subgroup $\Gamma$ of $G=\SO(6)$ only if ${\pi_{\Lambda_0}}$ and $ {\pi_{\Lambda_0'}}$ are simultaneously in $\widehat G_\tau$. 
This is the case for $\tau_\mu$ with $\mu= b_1\varepsilon_1+3\varepsilon_2$ for any $b_1\geq4$. 
This obstruction to apply Theorem~\ref{thm3:tauiso=>tauequiv} can be read off from Theorem~\ref{thm4:formSO(2n)SO(2n-1)} since the form \eqref{eq4:formSO(2n)SO(2n-1)} represents the same number at $(4,0,0)$ and at $(3,3,0)$ because $\rho_G=2\varepsilon_1+\varepsilon_2$. 
	
Further, for any $n\geq 4$, we may take  $\Lambda_0= 4\varepsilon_1+4\varepsilon_2+\varepsilon_3+ \varepsilon_4$ and $\Lambda_0'= 4\varepsilon_1+3\varepsilon_2+ 3\varepsilon_3$, which satisfy  $\langle \dir,\Lambda_0\rangle = \langle \dir,\Lambda_0'\rangle= 4$ and $\lambda(C,\pi_{\Lambda_0}) = \lambda(C,\pi_{\Lambda_0'}) = 20n-4$, thus $\mathcal E(\dir,\Lambda_0)= \mathcal E(\dir,\Lambda_0')$ by Proposition~\ref{prop3:stringeigenvalues}. 
Moreover, for $\mu= b_1 \varepsilon_1+3\varepsilon_2+\varepsilon_3$ with $b_1\geq4$, $\Lambda_0$ and $\Lambda'_0  \in \PP_{\tau_\mu}$.
\end{example}

	The authors expect the existence of arbitrary large families of strings having the same Casimir eigenvalues and lying in  $\widehat G_\tau$ for some $\tau\in\widehat K$.
	Table~\ref{table:strings} shows many examples found with computer help that evidence this claim. 
	This information tells us that the study of the representation-spectral converse for $(\SO(2n),\SO(2n-1),\tau_\mu)$ for $\mu$ not satisfying the conditions in Theorem~\ref{thm4:formSO(2n)SO(2n-1)} require additional techniques that allow handling infinitely many non-tame eigenvalues.

\begin{remark}
The examples in the Table~\ref{table:strings} give more examples of $\tau \in \widehat K$ and strings in $\widehat G_\tau$ having the same set of Casimir eigenvalues. 
We note that, for any $\mu = \sum_{i=1}^{n-1} b_i\ee_i$ in the table, it follows immediately that the same set of strings are in $\widehat G_{\tau_{\mu+b\ee_1}}$ for any $b\in\N_0$. 
\end{remark}

\begin{table}
\caption{A list of strings in $\widehat G_{\tau_\mu}$ for $G=\SO(2n)$, with $\omega =\varepsilon_1$ with the same set of Casimir eigenvalues for any choice of $\Lambda$ in the fourth column and $\mu$  in the second column. 
We are abbreviating $\mu = b_1\ee_1+\dots+b_{n-1}\ee_{n-1}$ by $[b_1,\dots,b_{n-1}]$, $\Lambda = a_1\ee_1+\dots+a_{n}\ee_{n}$ by $[a_1,\dots,a_{n}]$ for the string bases $\Lambda$. 
The third column shows the number of string bases with the same Casimir eigenvalues. 
The strings' bases in the table were obtained with computer help. Hence, for these choices, there are infinitely many non tame eigenvalues in $\mathcal E_{\tau_\mu}$.  
}
\label{table:strings}

$
\begin{array}{cccl}
n& \mu&\# &\multicolumn{1}{c}{\text{strings bases} }
\\[1mm]
\hline \hline
3&[4,3] & 2&
	\begin{array}{l} [4, 3, 3] , [4, 4, 0]\end{array}
\\[1mm] \hline
3&[17,13]&3&
	\begin{array}{l}[17,14,10],[17,16,10], [17,17,1]\end{array}
\\ [1mm]\hline
3&[32,23]&4&
	\begin{array}{l} [32, 23, 23] , [32, 30, 12] , [32, 31, 9] , [32, 32, 4] \end{array}
\\[1mm] \hline
3&[64,50]&5&
	\begin{array}{l} [64, 51, 39] , [64, 55, 33] , [64, 59, 25] , [64, 62, 16] , [64, 64, 0] \end{array}
\\[1mm] \hline
3&[73,53]&6&
	\begin{array}{l} [73, 54, 50] , [73, 61, 41] , [73, 69, 25] , [73, 70, 22] , [73, 72, 14] , \\{} [73, 73, 7]  \end{array}
\\[1mm] \hline\hline
4&[4,3,1]&2&
	\begin{array}{l} [4, 3, 3, 0] , [4, 4, 1, 1]   \end{array}
\\[1mm] \hline
4&[8, 6, 2]&3&
	\begin{array}{l} [8, 6, 6, 2] , [8, 7, 5, 0] , [8, 8, 3, 1]   \end{array}
\\[1mm] \hline
4&[14, 12, 8]&4&
	\begin{array}{l} [14, 12, 8, 8] , [14, 12, 11, 1] , [14, 13, 9, 4] , [14, 14, 8, 2]  \end{array}
\\[1mm] \hline
4&[18,13,2]&5&
	\begin{array}{l} [18, 13, 13, 2] , [18, 14, 12, 0] , [18, 16, 9, 1] , [18, 17, 7, 0] , [18, 18, 4, 0]		  \end{array}
\\[1mm] \hline
4&[23, 19, 14]&6&
	\begin{array}{l} [23, 19, 15, 13] , [23, 19, 18, 8] , [23, 19, 19, 5] , [23, 21, 15, 9] ,\\ {}  [23, 22, 16, 1] , [23, 23, 14, 4] \end{array}
\\[1mm] \hline
4&[25, 22, 14]&7&
	\begin{array}{l} [25, 22, 15, 13] , [25, 22, 18, 8] , [25, 22, 19, 5] , [25, 24, 14, 10] ,\\ {}  [25, 24, 16, 6] , [25, 24, 17, 1] , [25, 25, 15, 4] \end{array}
\\[1mm] \hline
4&[35, 30, 22]&8&
	\begin{array}{l} [35, 30, 22, 21] , [35, 30, 30, 3] , [35, 31, 27, 11] , [35, 31, 28, 8] ,\\ {}  [35, 33, 24, 12] , [35, 34, 22, 13] , [35, 35, 23, 7] , [35, 35, 24, 0]  \end{array}
\\[1mm] \hline\hline
5&[4, 3, 1, 0]&2&
	\begin{array}{l} [4, 3, 3, 0, 0] , [4, 4, 1, 1, 0]\end{array}
\\[1mm] \hline
5&[6, 5, 4, 2]&3&
\begin{array}{l} [6, 5, 4, 4, 1] , [6, 5, 5, 2, 2] , [6, 6, 4, 2, 0] \end{array}
\\[1mm] \hline
5&[9, 8, 6, 2]&4&
	\begin{array}{l} [9, 8, 6, 6, 0] , [9, 8, 8, 2, 2] , [9, 9, 6, 4, 1] , [9, 9, 7, 2, 0] \end{array}
\\[1mm] \hline
5&[11, 10, 6, 2]&5&
\begin{array}{l} 
	[11, 10, 6, 6, 2] , [11, 10, 7, 5, 0] , [11, 10, 8, 3, 1] , [11, 11, 6, 4, 1] , \\ {} [11, 11, 7, 2, 0] 
\end{array}
\\[1mm] \hline
5&[15, 13, 9, 2]&6&
\begin{array}{l} 
	[15, 13, 9, 9, 1] , [15, 13, 11, 6, 2] , [15, 13, 12, 4, 1] , [15, 14, 9, 7, 2] , \\ {} [15, 14, 11, 3, 2] , [15, 15, 10, 2, 1] 
\end{array}
\\[1mm] \hline
5&[14, 12, 9, 2]&7&
\begin{array}{l} 
	[14, 12, 9, 9, 2] , [14, 12, 10, 8, 0] , [14, 12, 12, 4, 2] , [14, 13, 10, 6, 1] , \\ {} [14, 13, 11, 4, 0] , [14, 14, 9, 5, 2] , [14, 14, 10, 3, 1] 
\end{array}
\\[1mm] \hline
5&[21, 18, 13, 2]&8&
\begin{array}{l} 
	[21, 18, 13, 13, 2] , [21, 18, 14, 12, 0] , [21, 18, 16, 9, 1] , [21, 18, 17, 7, 0] ,\\ {} [21, 18, 18, 4, 0] , [21, 20, 14, 8, 0] , [21, 20, 16, 2, 2] , [21, 21, 13, 7, 1] 
\end{array}
\\[1mm] \hline
5&[21, 18, 16, 13]&9&
\begin{array}{l} 
	[21, 18, 16, 14, 12] , [21, 18, 18, 16, 2] , [21, 19, 17, 14, 8] , \\ {}[21, 19, 17, 16, 0] , [21, 19, 18, 14, 5] , [21, 20, 16, 15, 5] , \\ {}[21, 20, 18, 13, 3] , [21, 21, 16, 14, 3] , [21, 21, 17, 13, 1]
\end{array}
\\[1mm] \hline
5&[23, 20, 13, 2]&10&
\begin{array}{l} 
	[23, 20, 13, 13, 2] , [23, 20, 14, 12, 0] , [23, 20, 16, 9, 1] , [23, 20, 17, 7, 0] ,\\ {} [23, 20, 18, 4, 0] , [23, 21, 14, 10, 1] , [23, 21, 17, 3, 1] , [23, 22, 13, 9, 2] , \\ {}[23, 22, 15, 5, 2] , [23, 23, 13, 6, 2] 
\end{array}
\\[1mm] \hline\hline
\end{array}
$

\bigskip

\end{table}

\subsubsection*{Applications to $p$-spectra}
We next consider the representation $\tau_p$ (see Definition~\ref{def2:tau_p}) associated to the $p$-form spectrum. 
In the present case, $\tau_p$ is irreducible with highest weight $\varepsilon_1+\dots+\varepsilon_p$ for every $0\leq p\leq n-1$.  
The representation-spectral converse for $p$-forms holds over odd-dimensional spheres presented as $\Ot(2n)/\Ot(2n-1)$ (see for instance \cite[Thm.~1.2~(i)]{LMR-repequiv}).
The next results show that everything works in the same manner for $p<n-1$, but the situation changes drastically for $p=n-1$ when $n$ is even. 
The {next proposition} follows from Corollary~\ref{cor4:SO(2n)SO(2n-1)situations}~(i).

\begin{proposition}\label{thm4:p-isoSO(2n)}
The strong representation-spectral converse is valid for $(\SO(2n), \SO(2n-1) ,\tau_{p})$ for all $0\leq p\leq n-2$, and also for $p=n-1$ provided that $n$ is odd. 
\end{proposition}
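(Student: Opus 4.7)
The plan is to apply Corollary~\ref{cor4:SO(2n)SO(2n-1)situations}(i) directly to the highest weight of $\tau_p$ and verify that its hypotheses are met precisely in the two cases stated in the proposition.

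First I will identify $\tau_p$ as an irreducible $K$-representation with explicit highest weight. For $0\leq p\leq n-1$ the exterior power $\bigwedge^p(\mathfrak p_\C^*)$ is irreducible over $K=\SO(2n-1)$ with highest weight $\mu_p=\varepsilon_1+\dots+\varepsilon_p$; that is, writing $\mu_p=\sum_{i=1}^{n-1}b_i\varepsilon_i$, one has $b_1=\dots=b_p=1$ and $b_{p+1}=\dots=b_{n-1}=0$. (For $p=0$ one simply has $\mu_0=0$, and the statement reduces to Pesce's result already recalled in the introduction.)

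Next I will check the two requirements of Corollary~\ref{cor4:SO(2n)SO(2n-1)situations}(i). Condition (i) of that corollary asks for $b_1\leq 3$, and here $b_1\in\{0,1\}$, so it is trivially satisfied. The standing assumption that $b_{n-1}=0$ whenever $n$ is even translates into $p\leq n-2$ in that case; for $n$ odd no such restriction is imposed. Thus the corollary applies exactly in the two situations claimed in Proposition~\ref{thm4:p-isoSO(2n)}, which gives the strong representation-spectral converse.

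There is no substantial obstacle inside the argument itself; the only delicate point is understanding why $p=n-1$ with $n$ even must be excluded. In that case $b_{n-1}=1$, and Remark~\ref{rem4:b_n-1=0} already indicates that the quadratic form \eqref{eq4:formSO(2n)SO(2n-1)} takes the same value on $(1,\dots,1,1)$ and $(1,\dots,1,-1)$ in $\TT_{\SO(2n),\mu_{n-1}}$. Hence the separation hypothesis of Theorem~\ref{thm4:formSO(2n)SO(2n-1)} fails for $p=n-1$, $n$ even: a string and its conjugate contribute the same Casimir eigenvalues, yielding infinitely many non-tame eigenvalues. This is not an artifact of the method but reflects a genuine phenomenon, as Theorem~\ref{thm4:conversefailsSO(4n)} will exhibit an explicit counterexample for $(\SO(4n),\SO(4n-1),\tau_{2n-1})$.
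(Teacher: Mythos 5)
Your proposal is correct and coincides with the paper's own (one-line) argument: the paper likewise records that $\tau_p$ is irreducible with highest weight $\varepsilon_1+\dots+\varepsilon_p$ and deduces the proposition directly from Corollary~\ref{cor4:SO(2n)SO(2n-1)situations}~(i), the hypothesis $b_{n-1}=0$ for $n$ even being exactly what forces $p\leq n-2$ in that case. Your additional explanation of why $p=n-1$ with $n$ even must be excluded is accurate and consistent with Remark~\ref{rem4:b_n-1=0} and Theorem~\ref{thm4:conversefailsSO(4n)}.
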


\begin{theorem}\label{thm4:conversefailsSO(4n)}
For any $n\geq 2$, $n$ even, there exist $\Gamma$ and $\Gamma'$ finite subgroups of $G=\SO(2n)$ such that $\Gamma\ba S^{2n-1}$ and $\Gamma'\ba S^{2n-1}$ are isospectral on $(n-1)$-forms, but they are not $\tau_{n-1}$-representation equivalent in $G$. 
\end{theorem}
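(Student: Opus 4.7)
The strategy is to exploit the outer automorphism of $G=\SO(2n)$ obtained from conjugation by the involution $J = \diag(I_{2n-1},-1)\in \Ot(2n)\smallsetminus \SO(2n)$. Let $\sigma\colon G\to G$, $\sigma(g) = JgJ^{-1}$. A direct check shows that $\sigma$ fixes $K = \SO(2n-1)$ pointwise (since $J$ commutes with every $k\in K$) and preserves the inner product $\langle\cdot,\cdot\rangle$ on $\mathfrak g$. Moreover $\sigma$ is outer, as its only possible inner representatives $\pm J$ lie in $\Ot(2n)\smallsetminus \SO(2n)$. Consequently, for any finite subgroup $\Gamma$ of $G$, the assignment $\Gamma gK\mapsto \Gamma'\sigma(g)K$ with $\Gamma' := \sigma(\Gamma)$ is a well-defined isometry of Riemannian orbifolds $\Gamma\ba S^{2n-1}\to \Gamma'\ba S^{2n-1}$. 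Since $\sigma|_K$ is trivial, this isometry lifts to an isomorphism of the $\tau_{n-1}$-bundles intertwining the Hodge--Laplace operators, and so $\Gamma\ba S^{2n-1}$ and $\Gamma'\ba S^{2n-1}$ are automatically $(n-1)$-form isospectral.

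I next analyze the action of $\sigma$ on $\widehat G$. Since $d\sigma$ acts on $\mathfrak t_\C$ by $\varepsilon_n\mapsto -\varepsilon_n$ while fixing the remaining $\varepsilon_j$, one obtains $\pi_\Lambda\circ \sigma\simeq \pi_{\overline\Lambda}$ for $\Lambda = \sum_i a_i\varepsilon_i$, where $\overline\Lambda = \sum_{i<n}a_i\varepsilon_i - a_n\varepsilon_n$ (as in Section~\ref{sec:oddspheres}). A change of variable yields $n_{\Gamma'}(\pi_\Lambda) = n_\Gamma(\pi_{\overline\Lambda})$, so $\Gamma$ and $\Gamma'$ fail to be $\tau_{n-1}$-representation equivalent as soon as one exhibits some $\pi_\Lambda\in \widehat G_{\tau_{n-1}}$ with $a_n\ne 0$ and $n_\Gamma(\pi_\Lambda)\ne n_\Gamma(\pi_{\overline\Lambda})$. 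By Lemma~\ref{lem4:hatG_tauSO(2n)}, the chiral $\tau_{n-1}$-spherical representations are exactly the $\pi_{(a_1,1,\dots,1,\pm 1)}$ with $a_1\ge 1$, and $\pi_\Lambda\not\simeq\pi_{\overline\Lambda}$ because $n$ is even (cf.\ Remark~\ref{rem4:b_n-1=0}).

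To produce an explicit $\Gamma$, I would take the cyclic group $\langle\gamma\rangle$ of order $3$ generated by $\gamma = \diag(R(2\pi/3),\dots,R(2\pi/3))\in T$, and compare $\pi_\Lambda$ and $\pi_{\overline\Lambda}$ for $\Lambda = \varepsilon_1+\cdots+\varepsilon_n$; these are the two inequivalent halves $\Lambda^n_\pm(\mathbb C^{2n})$ of the middle exterior power, both of which lie in $\widehat G_{\tau_{n-1}}$. Since $n_\Gamma(\pi) = \tfrac{1}{3}\sum_{k=0}^{2}\chi_\pi(\gamma^k)$, the argument reduces to the classical identity
\[
\chi_{\pi_\Lambda}(g) - \chi_{\pi_{\overline\Lambda}}(g) = (2\mi)^n \prod_{j=1}^{n}\sin\theta_j
\qquad\text{for } g = \diag(R(\theta_1),\dots,R(\theta_n)),
\]
which follows from the Weyl character formula for $D_n$ (it is the square of the half-spin character difference). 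The main obstacle is this character identity; once it is established, evaluation at $g = \gamma$ and $g = \gamma^2$ gives $n_\Gamma(\pi_\Lambda) - n_\Gamma(\pi_{\overline\Lambda}) = \tfrac{2}{3}(-3)^{n/2}\ne 0$, producing the desired counterexample. The smallest instance $n=2$ yields $n_\Gamma(\pi_{\varepsilon_1+\varepsilon_2})=1\ne 3 = n_\Gamma(\pi_{\varepsilon_1-\varepsilon_2})$ on $S^3$.
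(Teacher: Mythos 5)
Your proposal is correct and arrives at essentially the paper's counterexample, but the decisive multiplicity computation is carried out by a different device, so a comparison is worth recording. The paper likewise obtains $\Gamma'$ from $\Gamma$ by conjugation by $\diag(I_{2n-1},-1)\in\Ot(2n)\smallsetminus\SO(2n)$ (this viewpoint is spelled out in Remark~\ref{rem4:outer-automorphism}); it takes $\Gamma\subset T$ cyclic of order $q>n$ and proves $n_{\Gamma}(\pi_\pm)-n_{\Gamma'}(\pi_\pm)=\pm\binom{n}{n/2}$ by splitting $V_\pm$ into the weight spaces with $\norma{\eta}=n$ (where the multiplicities of $\pi_+$ and $\pi_-$ differ according to the parity of the number of coefficients equal to $-1$) and those with $\norma{\eta}<n$ (where they agree), quoting the multiplicity formulas of \cite{LR-fundstring}. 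You instead take $\Gamma$ of order $3$ and evaluate the virtual character $\chi_{\pi_+}-\chi_{\pi_-}=\prod_{j}(e^{\mi\theta_j}-e^{-\mi\theta_j})$, obtaining $n_\Gamma(\pi_+)-n_\Gamma(\pi_-)=\tfrac23(-3)^{n/2}=\pm 2\cdot 3^{n/2-1}\neq 0$; I checked the $n=2$ instance ($1$ versus $3$) and it is right. The two computations are equivalent in substance --- your character identity is precisely the generating function of the weight-multiplicity facts the paper invokes --- but yours is shorter once the identity is granted, and the identity is classical: it follows from the Weyl character formula, or from $\chi_{\Delta_+}^2-\chi_{\Delta_-}^2=\prod_j(x_j-x_j^{-1})$ together with the known weights of the half-spin representations. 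Two small inaccuracies, neither fatal: the identity is not ``the square of the half-spin character difference'' but the \emph{difference of the squares}, since $\prod_j(x_j^{1/2}-x_j^{-1/2})^2=\prod_j(x_j-2+x_j^{-1})\neq\prod_j(x_j-x_j^{-1})$; and the reason $n$ must be even is not that $\pi_\Lambda\not\simeq\pi_{\overline\Lambda}$ (true whenever $a_n\neq0$, for either parity) but that for $n$ odd one has $\pi_{\overline\Lambda}\simeq\pi_\Lambda^*$, which forces $n_\Gamma(\pi_\Lambda)=n_\Gamma(\pi_{\overline\Lambda})$ for every $\Gamma$ by Remark~\ref{rem2:Wallach}; for $n$ even the representations are self-dual and this obstruction disappears, as your explicit evaluation then confirms.
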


\begin{proof}
	Let $q\in \N$, $q>n$. The cyclic groups of order $q$, $\Gamma: = \{\diag(R(\tfrac{2\pi h}{q}), \dots, R(\tfrac{2\pi h}{q})): h\in\Z \}$, $\Gamma':= \{\diag(R(\tfrac{2\pi h}{q}), \dots, R(\tfrac{2\pi h}{q}), R(\tfrac{-2\pi h}{q})): h\in\Z \}$, are contained in the maximal torus $T$ of $G$ and they act freely on $S^{2n-1}$. The corresponding manifolds $\Gamma\ba S^{2n-1}$ and $\Gamma'\ba S^{2n-1}$ are  \emph{lens spaces}. 
	They are isometric since $\Gamma$ and $\Gamma'$ are conjugate in $\Ot(2n)$.
	Hence  $\Gamma\ba S^{2n-1}$ and $\Gamma'\ba S^{2n-1}$ are $(n-1)$-isospectral. 
	The rest of the proof is devoted to show that  $\Gamma$ and $\Gamma'$ are not $\tau_{n-1}$-representation equivalent in $\SO(2n)$.

	The irreducible representations of $\SO(2n)$ with highest weights $\Lambda_0:=\varepsilon_1+\dots+\varepsilon_{n}$ and $\overline{\Lambda}_0:=\varepsilon_1+\dots+\varepsilon_{n-1}-\varepsilon_n$ contain the $K$-type $\tau_{n-1}$ by \eqref{eq4:SO(2n)entrelazamiento}, i.e.\  they are in $\widehat G_{\tau_{n-1}}$.
	In what follows we shall abbreviate $\pi_{+}=
	\pi_{\Lambda_0}$, $\pi_{-}= \pi_{\overline \Lambda_0}$, and $V_\pm=V_{\pi_{\pm}}$. 
	We claim that 
	\begin{equation}\label{eq4:claimcountexSO(4n)}
	n_{\Gamma}(\pi_{\pm})-n_{\Gamma'}(\pi_{\pm})= \pm\binom{n}{n/2}\neq0,
	\end{equation}
	which will prove the assertion in the theorem.  
	It is important to recall that $\pi_{+}$ and $\pi_{-}$ are not conjugate to each other since $n$ is assumed to be even. 
	Indeed, every irreducible representation of $G=\SO(2n)$ for $n$ even is self-conjugate (see \cite[VI.(5.5)(ix)]{BrockerDieck}).

	Since $\Gamma, \Gamma'\subset T$, their elements  preserve the weight spaces in the  decompositions $V_\pm=\bigoplus_\eta V_\pm(\eta)$. 
	Here, $V_{\pm}(\eta)$ is the weight space associated to $\eta$, that is, $V_{\pm}(\eta)=\{v\in V_{\pm}: \pi_\pm (\exp(X))\cdot v= e^{\eta(X)}\, v\;\text{for all} X\in\mathfrak t\}$.
	Consequently, 
	$$
	n_\Gamma(\pi_\pm) = \dim V_{\pm}^\Gamma= \sum_{\eta} \dim V_{\pm}(\eta)^\Gamma
	= \sum_{\eta:\, V_{\pm}(\eta)^\Gamma=V_{\pm}(\eta)} \dim V_{\pm}(\eta)
	$$ 
	and similarly for $\Gamma'$. 
	The multiplicity of $\eta$ in $\pi_{\pm}$, $\dim V_{\pm}(\eta)$, has been explicitly computed (see for instance \cite{LR-fundstring}).
	Indeed, an arbitrary element $t=\diag \left(R(\theta_1),\dots,R(\theta_n)\right)\in T$ acts on a weight space of weight $\eta= \sum_{j=1}^n a_j\varepsilon_j\in \PP(G)$ by the scalar $e^{-i\sum_{j=1}^n a_j\theta_j}$. 
	Hence, $V_\pm(\eta)$ is invariant by $\Gamma$ (resp.\ $\Gamma'$) if and only if $\sum_{j=1}^n a_j \equiv0\pmod q$ (resp.\ $\sum_{j=1}^{n-1} a_j-a_n \equiv0\pmod q$). 
	The congruence equalities can be replaced by equalities since $q>n$ and, by \cite[(16)]{LR-fundstring}, $V_\pm(\mu)\neq0$ forces $|a_j|\leq 1$ for all $j$.
	One also has that $\norma{\eta}:=\sum_{j=1}^n |a_j|=n-2r$ for some non-negative integer $r$. 
	
	We now split $V_\pm$ as 
	$V_{\pm} = W_{\pm} \oplus U_{\pm}$, where 
	\begin{align*}
		W_{\pm}&= \sum_{\eta: \norma{\eta}=n} V_{\pm}(\eta),&
		U_{\pm}&= \sum_{\eta: \norma{\eta}<n} V_{\pm}(\eta). 
	\end{align*}
	By \cite[Rem.~IV.5]{LR-fundstring}, for $\eta=\sum_{j=1}^n a_j\varepsilon_j$  and $|a_j|\leq 1$ for all $j$, one has that $\dim V_+(\eta)= \dim V_-(\eta) = \binom{2r}{r}$, where $r=\frac{n-\norma{\eta}}{2}\in\N$. 
	We conclude that $U_+=U_-$, thus $\dim U_{+}^{\Gamma}= \dim U_{-}^{\Gamma}$ and $\dim U_{+}^{\Gamma'}= \dim U_{-}^{\Gamma'}$.
	For $\eta$ as above, a (non-trivial) weight space of weight $\eta$ is invariant by $\Gamma$ if and only if the (non-trivial) weight space of weight $\overline\eta$ is invariant by $\Gamma'$, where $\overline\eta=\sum_{j=1}^{n-1}a_j\varepsilon_j - a_n\varepsilon_n$. 
	It hence follows that $\dim U_{+}^{\Gamma}= \dim U_{-}^{\Gamma}=  \dim U_{+}^{\Gamma'}= \dim U_{-}^{\Gamma'}$. 
	
	We have shown so far that $n_{\Gamma}(\pi_\pm) - n_{\Gamma'}(\pi_{\pm}) = \dim W_{\pm}^{\Gamma}- \dim W_{\pm}^{\Gamma'}$. 
	By \cite[Rem.~IV.5]{LR-fundstring}, if $\eta=\sum_{j=1}^n a_j\varepsilon_j$ with $\norma{\eta}=n$ and $|a_j|=1$ for all $j$, then $\dim V_+(\eta)=1$ and $\dim V_-(\eta) = 0$ if $\#\{j: 1\leq j\leq n,\; a_j=-1\}$ is even. 
	Otherwise $\dim V_+(\eta)=0$ and $\dim V_-(\eta) = 1$. 
	Hence, $\dim W_{+}^{\Gamma}=\binom{n}{n/2}=\dim W_{-}^{\Gamma'}$, $\dim W_{-}^{\Gamma}=0=\dim W_{+}^{\Gamma'}$, which proves \eqref{eq4:claimcountexSO(4n)} and completes the proof of the theorem.
\end{proof}

We next explain the above counter-example by using outer automorphisms of $G=\SO(2n)$.

\begin{remark} \label{rem4:outer-automorphism}
Let $g_0=\diag(1,\dots,1,-1)\in\Ot(2n)$. 
Although $g_0\notin G$, the map $\varphi:G\to G$ given by $\varphi(x)=g_0^{-1}xg_0=g_0xg_0$ is an automorphism of $G$.
One can check that $\varphi$ is not an inner automorphism and $\varphi^2$ is the identity map on $G$. 

For $\Gamma$ and $\Gamma'$ as in the proof of Theorem~\ref{thm4:conversefailsSO(4n)}, one has that $\varphi(\Gamma)=\Gamma'$, thus $\Gamma\ba S^{2n-1}$ and $\Gamma'\ba S^{2n-1}$ are isometric because $\Gamma$ and $\Gamma'$ are conjugate in $\Ot(2n)$, and then $\Gamma\ba S^{2n-1}$ and $\Gamma'\ba S^{2n-1}$ are $\tau$-isospectral for all $\tau$.

For $\Lambda=\sum_{i=1}^n a_i\varepsilon_i\in \PP^{+}(G)$, we write $\bar\Lambda= \sum_{i=1}^{n-1} a_i\varepsilon_i-a_n\varepsilon_n$, which is again a dominant $G$-integral weight.
One has that $\pi_\Lambda \circ\varphi\simeq \pi_{\bar\Lambda}$ as representations of $G$ for any $\Lambda\in\PP^{+}(G)$ (see \cite[\S{}5.5.5]{GoodmanWallach-bookSpringer}).  
In other words, the representation $\pi_{\Lambda}\circ\varphi$ of $G$ is irreducible and has highest weight $\bar\Lambda$, thus $\pi_{\bar\Lambda}\circ\varphi\simeq \pi_\Lambda$. 
It then follows that $\dim V_{\pi_\Lambda}^{\Gamma'}= \dim V_{\pi_\Lambda}^{\varphi(\Gamma)} = \dim V_{\pi_\Lambda\circ\varphi}^{\Gamma} = \dim V_{\pi_{\bar\Lambda}}^{\Gamma}$ and similarly $\dim V_{\pi_\Lambda}^{\Gamma}= \dim V_{\pi_{\bar\Lambda}}^{\Gamma'}$, which yields
\begin{equation}
\dim V_{\pi_\Lambda}^{\Gamma}+\dim V_{\pi_{\bar\Lambda}}^{\Gamma}= 
\dim V_{\pi_{\Lambda}}^{\Gamma'}+ \dim V_{\pi_{\bar\Lambda}}^{\Gamma'}.
\end{equation} 
For arbitrary $\Gamma,\Gamma$ finite subgroups of $G$ and for all $\pi_\Lambda\in\widehat G_{\tau_{n-1}}$, the above identity is equivalent to {the fact that} $\Gamma$ and $\Gamma'$ are $\widetilde \tau_{n-1}$-representation equivalent (established in \cite[Cor.~1.2~(i)]{LMR-repequiv}), where $S^{2n-1}$ is realized as $\Ot(2n)/\Ot(2n-1)$ and $\widetilde\tau_{n-1}$ is the irreducible representation introduced in Definition~\ref{def2:tau_p} for $K=\Ot(2n-1)$ .

The difficulty in the proof of Theorem~\ref{thm4:conversefailsSO(4n)} was to show that $\dim V_{\pi_{\Lambda}}^\Gamma \neq \dim V_{\pi_{\Lambda}}^{\Gamma'}$ for at least one $\pi_{\Lambda}\in \widehat G_{\tau_{n-1}}$. 
This is never the case when $n$ is odd because $\pi_{\Lambda}$ and $\pi_{\bar\Lambda}$ are dual to each other (see \cite[VI.(5.5)(ix)]{BrockerDieck}), and consequently $\dim V_{\pi_\Lambda}^{\widetilde\Gamma} = \dim V_{\pi_{\bar \Lambda}}^{\widetilde\Gamma}$ for every finite subgroup $\widetilde\Gamma$ of $G$, by Remark~\ref{rem2:Wallach}. 

We conclude the remark by noting that, although there are other complex simple Lie algebras having a non-trivial outer automorphism $\varphi$, one always has in theses cases that $\pi\circ\varphi \simeq \pi^*$ for all $\pi \in \widehat G$.  
Therefore, one cannot construct similar counterexamples in any other compact symmetric space $G/K$ of real rank one. 
\end{remark}

\begin{remark}\label{rem4:conversefailsSO(4n)}
If $n$ is even, similar examples as in Theorem~\ref{thm4:conversefailsSO(4n)} should exist for any $\tau_\mu$ with $\mu=\sum_{i=1}^{n-1} b_i\varepsilon_i$ such that $b_{n-1}>0$. 
Consequently, the condition $b_{n-1}=0$ could not possibly be avoided in the assumptions of Theorem~\ref{thm4:formSO(2n)SO(2n-1)} (see Remark~\ref{rem4:b_n-1=0}).
\end{remark}

\begin{remark}
By realizing the $7$-dimensional round sphere as the normal homogeneous space $\Spin(8)/\Spin(7)$ endowed with the $\Ad(\Spin(8))$-invariant inner product $\langle X,Y\rangle = -\tfrac12\tr(XY)$, for $X,Y\in\mathfrak{spin}(8)=\so(8)$, we have at hand the triality phenomenon (see for instance \cite[\S{}20.3]{FultonHarris-book}).
The outer automorphism group is isomorphic to the symmetric group on three elements. 
Its elements permute the three $8$-dimensional representations of $\Spin(8)$: the standard representation $\pi_{\varepsilon_{1}}$ and the spin representations $\pi_{\frac12(\varepsilon_{1}+ \varepsilon_{2}+\varepsilon_{3}-\varepsilon_{4})}$ and $\pi_{\frac12(\varepsilon_{1}+ \varepsilon_{2}+\varepsilon_{3}+\varepsilon_{4})}$. 
This situation should provide more counterexamples to the representation-spectral converse as in Theorem~\ref{thm4:conversefailsSO(4n)}, according to the observations in Remark~\ref{rem4:outer-automorphism}.
\end{remark}

\begin{remark}
It is important to note that the notion of $\tau$-representation equivalence in $G$ depends heavily on the choice of $(G,K)$, and not only on the quotient $G/K$. 
For example, Theorem~\ref{thm4:conversefailsSO(4n)} shows that the representation-spectral converse fails for $(\SO(2n),\SO(2n-1),\tau_{n-1})$, but it is valid for $(G,K)=(\Ot(2n),\Ot(2n),\tau_{n-1})$ by \cite[Thm.~1.2~(i)]{LMR-repequiv}. 
Actually, {we} do not  know if there is a finite-dimensional representation $\tau$ of $\Ot(2n-1)$ such that the converse fails.
\end{remark}

\subsubsection*{Three-dimensional spherical space forms} 
We conclude this section with a different kind of application of the strong multiplicity one theorem for strings. 

\begin{proposition}\label{prop4:3-dim}
Let $\Gamma$ and $\Gamma'$ be finite subgroups of $\SO(4)$ and let $\tau$ be an irreducible representation of $\SO(3)$. 
If $\Gamma\ba S^3$ and $\Gamma'\ba S^3$ are $\tau$-isospectral, then they are $0$-isospectral. 
\end{proposition}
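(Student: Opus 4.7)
The plan is to reduce the statement to an application of Theorem~\ref{thm3:finiteSMOTtau} for the trivial $K$-type $1_K$, after showing that the eigenvalue data along a single distinguished string of $\widehat G_\tau$ survive intact under $\tau$-isospectrality.

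First I would specialize Lemma~\ref{lem4:hatG_tauSO(2n)} to $n=2$: writing $\tau=\tau_\mu$ with $\mu=b_1\varepsilon_1$, the set $\widehat G_\tau$ decomposes into the $2b_1+1$ disjoint strings $\{\pi_{(b_1+k)\varepsilon_1+a_2\varepsilon_2}:k\in\N_0\}$ indexed by $-b_1\leq a_2\leq b_1$, with common direction $\dir=\varepsilon_1$, and all branching multiplicities $[\tau:\pi|_K]$ equal to $1$. The case $b_1=0$ is tautological, so I will assume $b_1\geq 1$. Using \eqref{eq3:lambda(C,pi)} together with $\rho_G=\varepsilon_1$, the Casimir eigenvalue of $\pi_{a_1\varepsilon_1+a_2\varepsilon_2}$ becomes $(a_1+1)^2+a_2^2-1$.

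Next I would establish that the Casimir eigenvalue of $\pi_{a_1\varepsilon_1}$ (the $a_2=0$ string) is tame in $\mathcal E_\tau$ for every sufficiently large $a_1$. A putative further contribution from some $\pi_{a_1'\varepsilon_1+a_2'\varepsilon_2}\in\widehat G_\tau$ with $a_2'\neq 0$ forces $a_1'<a_1$ and
\[
(a_1-a_1')(a_1+a_1'+2)=(a_2')^2\leq b_1^2,
\]
so, since $a_1'\geq b_1\geq 0$, only the range $a_1\leq b_1^2-2$ can be affected. Because every irreducible representation of $\SO(4)$ is self-dual (the case $n=2$ even of $\SO(2n)$), no dual companion needs to be added either. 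Hence for every $a_1\geq b_1^2-1$, equation \eqref{eq2:taumultip} collapses to $\mult_{\Delta_{\tau,\Gamma}}\!\big((a_1+1)^2-1\big)=n_\Gamma(\pi_{a_1\varepsilon_1})$, and $\tau$-isospectrality yields $n_\Gamma(\pi_{a_1\varepsilon_1})=n_{\Gamma'}(\pi_{a_1\varepsilon_1})$ for all but finitely many $a_1\in\N_0$.

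To finish, I would observe that the branching law singles out $\widehat G_{1_K}=\{\pi_{j\varepsilon_1}:j\in\N_0\}$, so the previous step gives $n_\Gamma(\pi)=n_{\Gamma'}(\pi)$ for a.e.\ $\pi\in\widehat G_{1_K}$. Theorem~\ref{thm3:finiteSMOTtau} applied with $\tau=1_K$ then upgrades this to equality for every such $\pi$, i.e.\ to $1_K$-representation equivalence of $\Gamma$ and $\Gamma'$ in $G$, which by \eqref{eq2:taumultip} forces $0$-isospectrality of $\Gamma\ba S^{3}$ and $\Gamma'\ba S^{3}$. The main conceptual obstacle is that for each $a_2>0$ the pair of strings based at $b_1\varepsilon_1\pm a_2\varepsilon_2$ produces identical Casimir eigenvalues at every level (condition (i) of Proposition~\ref{prop3:stringeigenvalues}), and the representations involved are not dual to each other, so one cannot hope for full $\tau$-representation equivalence; the saving point is that only the central $a_2=0$ string is needed to feed Theorem~\ref{thm3:finiteSMOTtau} at the trivial $K$-type.
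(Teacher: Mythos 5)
Your proposal is correct and follows essentially the same route as the paper: both isolate the central string $\{\pi_{a_1\varepsilon_1}\}$, show its Casimir eigenvalues are eventually tame (you via an explicit factorization bound $a_1\geq b_1^2-1$, the paper by invoking Lemma~\ref{lem3:coincidences}), and then feed the resulting a.e.\ equality of the $n_\Gamma(\pi_{a_1\varepsilon_1})$ into the strong multiplicity one theorem for the trivial $K$-type to conclude $0$-isospectrality. The only cosmetic difference is that you cite Theorem~\ref{thm3:finiteSMOTtau} with $\tau=1_K$ where the paper cites a corollary of the same reference; both close the argument identically.
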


\begin{proof}
Let $\mu=b\varepsilon_1$ be the highest weight of $\tau$.
Let $\tau_0$ denote the trivial representation of $\SO(3)$, which has highest weight $0$. 
By \eqref{eq4:P_tauSO(2n)}, we have that $\PP_{\tau_0}=\{0\}$ and $\PP_{\tau}= \{b\varepsilon_1+a_2\varepsilon_2: a_2\in\Z,\; |a_2|\leq b\}$. 
Observe that Theorem~\ref{thm4:formSO(2n)SO(2n-1)} is not available because $n=2$ is even and $b>0$. 
However, the proof of Theorem~\ref{thm3:tauiso=>tauequiv} in this particular case shows that $n_{\Gamma}(\pi_{k\varepsilon_1}) = n_{\Gamma'}(\pi_{k\varepsilon_1})$ for all but finitely many $k\geq b$.
In fact, from Lemma~\ref{lem3:coincidences} we conclude that the polynomial $\lambda(C,\pi_{(k+b) \varepsilon_1})= (k+b)(k+b+2)$ has finitely many coincidences with $\lambda(C,\pi_{(k+b) \varepsilon_1+a_2\varepsilon_2})= (k+b)(k+b+2)+a_2^2$ for any $a_2\neq 0$.
Now, Corollary~3.2 in \cite{LM-strongmultonethm} implies that  $n_{\Gamma}(\pi_{k\varepsilon_1}) = n_{\Gamma'}(\pi_{k\varepsilon_1})$ for all $k\geq0$, that is, $\Gamma$ and $\Gamma'$ are $\tau_0$-representation equivalent. 
Consequently, $\Gamma\ba S^3$ and $\Gamma'\ba S^3$ are $0$-isospectral, as asserted. 
\end{proof}

As a consequence, we  extend a result of Ikeda (\cite[Thm.~I]{Ikeda80_3-dimI}) asserting  that  if two $3$-dimensional spherical space forms are isospectral with respect to the Laplace--Beltrami operator (i.e.\ $0$-isospectral), then they are isometric.

\begin{theorem}\label{thm4:tau-iso3-dim}
If two $3$-dimensional spherical space forms are $\tau$-isospectral for any irreducible representation $\tau$ of $\SO(3)$, then they are isometric. 
\end{theorem}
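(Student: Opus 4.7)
The plan is to derive this as an immediate consequence of the preceding Proposition~\ref{prop4:3-dim} combined with Ikeda's classical isospectral-implies-isometric result for $3$-dimensional spherical space forms. The whole new content has already been packaged into Proposition~\ref{prop4:3-dim}, so the theorem itself is essentially a corollary.

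First I would recall that any $3$-dimensional spherical space form is orientable and hence can be realized as a quotient $\Gamma\backslash S^{3}$, where $\Gamma$ is a finite subgroup of $\SO(4)$ acting freely on $S^{3}=\SO(4)/\SO(3)$ (with the round metric induced by $\langle X,Y\rangle=-\tfrac12\tr(XY)$ on $\so(4)$). This puts us precisely in the hypotheses of Proposition~\ref{prop4:3-dim}.

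Now let $\Gamma\backslash S^{3}$ and $\Gamma'\backslash S^{3}$ be two such space forms that are $\tau$-isospectral for some irreducible representation $\tau$ of $\SO(3)$. Applying Proposition~\ref{prop4:3-dim} with this particular $\tau$, we conclude that $\Gamma\backslash S^{3}$ and $\Gamma'\backslash S^{3}$ are $0$-isospectral, that is, isospectral with respect to the Laplace--Beltrami operator acting on functions. At this point Ikeda's theorem \cite[Thm.~I]{Ikeda80_3-dimI} applies: two $3$-dimensional spherical space forms that are $0$-isospectral are isometric. This gives the desired conclusion.

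There is no real obstacle left at this stage; the hard work is entirely inside Proposition~\ref{prop4:3-dim}, where one uses that every string in $\widehat{G}_{\tau}$ has the common direction $\dir=\varepsilon_{1}$ and that the Casimir polynomials $\lambda(C,\pi_{(k+b)\varepsilon_{1}+a_{2}\varepsilon_{2}}) = (k+b)(k+b+2)+a_{2}^{2}$ differ from $\lambda(C,\pi_{(k+b)\varepsilon_{1}})$ by a nonzero constant whenever $a_{2}\neq 0$. This forces all but finitely many trivial-type eigenvalues to be tame, after which \cite[Cor.~3.2]{LM-strongmultonethm} yields $0$-representation equivalence, and hence $0$-isospectrality, from the assumed $\tau$-isospectrality. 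Once that reduction is in hand, Theorem~\ref{thm4:tau-iso3-dim} follows at once from Ikeda's classical rigidity result.
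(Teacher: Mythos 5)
Your proposal is correct and follows exactly the paper's route: the theorem is stated there as an immediate consequence of Proposition~\ref{prop4:3-dim} (which reduces $\tau$-isospectrality to $0$-isospectrality) together with Ikeda's rigidity theorem for $0$-isospectral $3$-dimensional spherical space forms. Your additional remark that such space forms are quotients $\Gamma\backslash S^3$ with $\Gamma$ a finite subgroup of $\SO(4)$ acting freely is a correct and harmless justification that the hypotheses of the proposition apply.
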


\begin{remark}
Naveed Shams Ul Bari and Eugenie Hunsicker~\cite{ShamsHunsicker17} recently proved that two  $3$-dimensional lens orbifolds (i.e.\ $\Gamma\ba S^3$ with $\Gamma$ a cyclic subgroup of $\SO(4)$) are $0$-isospectral if and only if they are isometric. 
Consequently, Proposition~\ref{prop4:3-dim} implies that two $\tau$-isospectral $3$-dimensional lens orbifolds are necessarily isometric. 
For a recent account on $p$-isospectrality among lens spaces, see \cite{LMR-SaoPaulo}.
\end{remark}

\section{Even dimensional spheres}
\label{sec:evenspheres}
We now consider even-dimensional spheres. 
This case has many similarities with the  previous one, so we will omit many details. 
Throughout the section, for any $n\geq2$, we set $G=\SO(2n+1)$ and let $K$ be the subgroup isomorphic to $\SO(2n)$ embedded in the upper left-hand block of $G$. 
Hence $G/K$ is diffeomorphic to $S^{2n}$. 
We fix the metric induced by the $\op{Ad}(G)$-invariant inner product $\langle X,Y\rangle =-\frac12 \tr(XY)$ on $\mathfrak g$ which gives constant sectional curvature one to $S^{2n}$.

We pick the maximal torus in $G$ given by 
$
T=\{\diag (R(\theta_1),\dots,R(\theta_{n}), 1): \theta_j\in\R\}, 
$
with $R(\theta)$ as defined in the previous section. 
Note that $T=T\cap K$ is also a maximal torus in $K$.
We have already described the root systems $\Phi(\mathfrak g_\C,\mathfrak t_\C)$ and $\Phi(\mathfrak k_\C,\mathfrak t_\C)$ in the previous section. 
We will only recall that $\{\varepsilon_1,\dots,\varepsilon_n\}$ is an orthornormal basis of $\mathfrak t_\C^*$ with respect to $\langle\cdot,\cdot\rangle$, $\PP^{+}(G) = \{\sum_{j=1}^n a_j\varepsilon_j\in \PP(G): a_1\geq\dots \geq a_n\geq0\}$, and $\PP^{+}(K) = \{\sum_{j=1}^{n} b_j\varepsilon_j\in \bigoplus_{j=1}^{n}\Z\varepsilon_j: b_1\geq\dots\geq b_{n-1}\geq |b_n|\}$.

We recall the well-known branching law {in} this case (see for instance \cite[Thm.~8.1.3]{GoodmanWallach-bookSpringer} and \cite[Thm.~9.16]{Knapp-book-beyond}).
If $\tau_\mu \in\widehat K$ and $\pi_\Lambda \in\widehat G$ have highest weights $\mu=\sum_{j=1}^{n} b_j\varepsilon_j\in\PP^{+}(K)$, $\Lambda= \sum_{j=1}^{n} a_j \varepsilon_j \in\PP^{+}(G)$ respectively, then $\tau_\mu$ occurs in the decomposition of $\pi_\Lambda|_K$ if and only if
\begin{equation}\label{eq5:SO(2n+1)entrelazamiento}
a_1\geq b_1\geq a_2\geq b_2\geq \dots \geq a_{n}\geq |b_{n}|.
\end{equation}
Furthermore, in this case  $\dim \Hom_K(\tau_\mu,\pi_{\Lambda})=1$.

\begin{lemma}\label{lem5:hatG_tauSO(2n+1)}
For $\mu=\sum_{j=1}^{n} b_j\varepsilon_j\in \PP^{+}(K)$, we have that 
\begin{equation}\label{eq5:hatSO(2n+1)_tau}
\widehat G_{\tau_\mu} 
= \bigcup_{\Lambda_0\in \PP_{\tau_\mu}} \String{\varepsilon_1,\Lambda_0}
= \bigcup_{\Lambda_0\in \PP_{\tau_\mu}} \{\pi_ {\Lambda_0+k\varepsilon_1}:k\in \N_0\}.
\end{equation}
where
\begin{equation}
\PP_{\tau_\mu} = \left\{ \sum_{j=1}^{n} a_j\varepsilon_j :a_1=b_1,\, a_j\in\Z \, \textrm{ for every } j,\,   \eqref{eq5:SO(2n+1)entrelazamiento} \text{ holds}\right\}.
\end{equation}
Thus, $\widehat G_{\tau_\mu}$ is a finite disjoint union of strings with direction $\dir = \varepsilon_1$.
\end{lemma}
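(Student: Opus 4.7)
The plan is to deduce this lemma directly from the branching law~\eqref{eq5:SO(2n+1)entrelazamiento} just stated, in the same way that Lemma~\ref{lem4:hatG_tauSO(2n)} was obtained in the preceding section. The crucial observation is that in the interlacing chain $a_1\geq b_1\geq a_2\geq b_2\geq \cdots \geq a_n\geq |b_n|$, the coefficient $a_1$ is bounded only from below by $b_1$, whereas every $a_j$ with $j\geq 2$ is trapped between fixed bounds ($b_{j-1}\geq a_j\geq b_j$, and $a_n\geq |b_n|$). Hence $a_1$ is free to grow while only finitely many choices exist for $(a_2,\dots,a_n)$, which naturally suggests parametrizing $\widehat G_{\tau_\mu}$ by shifting in the direction $\dir=\varepsilon_1$.

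Concretely, given $\pi_\Lambda\in\widehat G_{\tau_\mu}$ with $\Lambda=\sum_{j=1}^n a_j\varepsilon_j$, I would set $k:=a_1-b_1\in\N_0$ and $\Lambda_0:=b_1\varepsilon_1+\sum_{j=2}^n a_j\varepsilon_j$. A short check confirms that $\Lambda_0\in\PP^{+}(G)$ (the chain $b_1\geq a_2\geq\cdots\geq a_n\geq 0$ follows from~\eqref{eq5:SO(2n+1)entrelazamiento} together with $b_j\geq a_{j+1}$ and $|b_n|\geq 0$) and that $\Lambda_0$ still satisfies~\eqref{eq5:SO(2n+1)entrelazamiento}, so $\Lambda_0\in\PP_{\tau_\mu}$, with $\Lambda=\Lambda_0+k\varepsilon_1$. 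Conversely, for any $\Lambda_0\in\PP_{\tau_\mu}$ and $k\in\N_0$, the weight $\Lambda_0+k\varepsilon_1$ clearly remains in $\PP^{+}(G)$ and continues to satisfy~\eqref{eq5:SO(2n+1)entrelazamiento}, since enlarging the first coordinate only relaxes a lower-bound constraint. Together with the branching law (and $\dim\Hom_K(\tau_\mu,\pi_\Lambda)=1$ whenever the interlacing holds), this yields the set equality~\eqref{eq5:hatSO(2n+1)_tau}.

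For the remaining assertions: disjointness of the union follows because $\Lambda_0+k\varepsilon_1=\Lambda_0'+k'\varepsilon_1$ forces coordinate-wise agreement, and equality of the second through $n$-th coordinates already gives $\Lambda_0=\Lambda_0'$ (hence $k=k'$). Finiteness of $\PP_{\tau_\mu}$ is immediate from the bounds $b_{j-1}\geq a_j\geq b_j$ for $2\leq j\leq n-1$ and $b_{n-1}\geq a_n\geq |b_n|$, which leave only finitely many integer tuples $(a_2,\dots,a_n)$. I do not foresee any substantive obstacle; the entire argument is straightforward bookkeeping with the interlacing inequalities, and the only mild care needed is to check that $\dir=\varepsilon_1\in\PP^{+}(G)$ so that $\mathcal{S}(\varepsilon_1,\Lambda_0)$ is a bona fide string in the sense of~\eqref{eq3:string}.
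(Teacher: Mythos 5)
Your proof is correct and follows exactly the route the paper intends: the lemma is presented there as an immediate consequence of the branching law \eqref{eq5:SO(2n+1)entrelazamiento}, with the shift $k=a_1-b_1$ in the direction $\varepsilon_1$ being the same normalization used for Lemma~\ref{lem4:hatG_tauSO(2n)}. The verifications of dominance, disjointness, and finiteness are all as expected.
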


We now apply Theorem~\ref{thm3:tauiso=>tauequiv} in the present case.
Again, it turns out that condition (ii) in Proposition~\ref{prop3:stringeigenvalues} is never satisfied, just like  \eqref{eq3:dual} since every irreducible representation of $\SO(2n-1)$ is self-conjugate (see for instance \cite[VI.(5.6)]{BrockerDieck}). 

Consequently, we need only check whether $\lambda(C,\Lambda_0)\neq  \lambda(C,\Lambda_0')$ for all $\Lambda_0\neq \Lambda_0'\in \mathcal P_{\tau_{\mu}}$. 
Let $\Lambda_0=\sum_{i=1}^{n} a_i \varepsilon_i\in \PP_{\tau_\mu}$.
By \eqref{eq3:lambda(C,pi)}, since $\rho_G=\sum_{i=1}^{n}(n-i+1/2)\varepsilon_i$, \begin{align}
	\lambda(C,\pi_{\Lambda_0}) 
	= \langle \Lambda_0,\Lambda_0 +2\rho_G\rangle 
	= b_1(b_1+1+2(n-1)) + \sum_{i=2}^n a_i(a_i+1+2(n-i)).
\end{align}
We thus have the following result. 

\begin{theorem}\label{thm5:formSO(2n+1)SO(2n)}
	Let $G=\SO(2n+1)$, $K=\SO(2n)$, and $\tau_\mu\in\widehat K$ with $\mu= \sum_{i=1}^{n} b_i\varepsilon_i\in \PP^{+}(K)$. 
	Assume the form 
	\begin{equation}\label{eq5:formSO(2n+1)SO(2n)}
	(a_2,\dots,a_n)\longmapsto \sum_{i=2}^{n} a_i(a_i+1+2(n-i)) 
	\end{equation}
	represents different numbers on the set 
	\begin{equation}\label{eq5:T_GmuSO(2n+1)}
	\TT_{G,\mu} :=\{(a_2,\dots,a_n) \in\Z^{n-1}: b_1\geq a_2\geq b_2\geq \dots\geq b_{n-1}\geq a_{n}\geq |b_n|\}.
	\end{equation}  
	Then, a finite part of the spectrum $\mathcal E_\tau$ as in Theorem~\ref{thm3:tauiso=>tauequiv},  determines the whole spectrum of $\Delta_{\tau,\Gamma}$ for any finite subgroup $\Gamma$ of $G$.
	Furthermore, the strong representation-spectral converse is valid for $(\SO(2n+1),\SO(2n),\tau_\mu)$.
\end{theorem}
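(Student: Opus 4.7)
The plan is to verify the hypotheses of Theorem~\ref{thm3:tauiso=>tauequiv} for $(G,K,\tau_\mu) = (\SO(2n+1), \SO(2n), \tau_\mu)$, using the string decomposition provided by Lemma~\ref{lem5:hatG_tauSO(2n+1)} with direction $\dir = \varepsilon_1$ and base set $\PP_{\tau_\mu}$. Note that $\PP_{\tau_\mu}$ is finite since the coefficients $a_i$ of any $\Lambda_0 \in \PP_{\tau_\mu}$ are bounded by the fixed interlacing inequalities \eqref{eq5:SO(2n+1)entrelazamiento}, and the union of strings is disjoint by the lemma. Thus what remains is to show that, for any distinct $\Lambda_0, \Lambda_0' \in \PP_{\tau_\mu}$, neither condition (i) nor condition (ii) of Proposition~\ref{prop3:stringeigenvalues} holds, so that \eqref{eq3:dual} is not needed at all.

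First I would rule out condition (ii). Every $\Lambda_0 \in \PP_{\tau_\mu}$ has the fixed first coordinate $a_1 = b_1$, so
\begin{equation*}
\langle \dir, \Lambda_0 \rangle = \langle \varepsilon_1, \Lambda_0 \rangle = b_1 = \langle \varepsilon_1, \Lambda_0' \rangle = \langle \dir, \Lambda_0' \rangle,
\end{equation*}
and condition (ii) of Proposition~\ref{prop3:stringeigenvalues} requires $\langle \dir, \Lambda_0 \rangle \neq \langle \dir, \Lambda_0' \rangle$, so it is vacuous. Next I would check condition (i). Since $\rho_G = \sum_{i=1}^n (n-i+\tfrac{1}{2})\varepsilon_i$, formula \eqref{eq3:lambda(C,pi)} gives
\begin{equation*}
\lambda(C,\pi_{\Lambda_0}) = b_1(b_1+2n-1) + \sum_{i=2}^n a_i(a_i+1+2(n-i)),
\end{equation*}
and the same expression for $\Lambda_0'$ with $a_i$ replaced by $a_i'$. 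The first summand is common to both, so $\lambda(C,\pi_{\Lambda_0}) = \lambda(C,\pi_{\Lambda_0'})$ is equivalent to the form \eqref{eq5:formSO(2n+1)SO(2n)} taking the same value at $(a_2,\dots,a_n)$ and $(a_2',\dots,a_n')$. By the standing hypothesis, this forces $(a_2,\dots,a_n) = (a_2',\dots,a_n')$ on $\TT_{G,\mu}$, hence $\Lambda_0 = \Lambda_0'$. Therefore condition (i) also fails for distinct pairs in $\PP_{\tau_\mu}$.

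Combining these, the hypotheses of Theorem~\ref{thm3:tauiso=>tauequiv} are met (with the dual exception \eqref{eq3:dual} unused, as it can be noted for completeness that every irreducible representation of $\SO(2n+1)$ is self-conjugate, see \cite[VI.(5.6)]{BrockerDieck}). Hence all but finitely many eigenvalues in $\mathcal E_{\tau_\mu}$ are tame, and the strong representation-spectral converse is valid for $(\SO(2n+1), \SO(2n), \tau_\mu)$. Finally, the first assertion, that a finite part of the spectrum already determines the whole spectrum for each finite subgroup $\Gamma$ of $G$, follows from the refinement outlined in Remark~3.8: once tameness a.e.\ is established, one selects a finite set $\mathcal F_{\tau_\mu}$ of tame eigenvalues meeting each arithmetic progression in \eqref{eq3:finiteeigenvalues} and invokes \cite[Thm.~1.2]{LM-strongmultonethm} to recover all $n_\Gamma(\pi)$, $\pi \in \widehat G_{\tau_\mu}$. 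No serious obstacle is expected; the entire argument is a near-verbatim transcription of the $\SO(2n)/\SO(2n-1)$ case in Theorem~\ref{thm4:formSO(2n)SO(2n-1)}, simplified by the fact that self-conjugacy of representations of $\SO(2n+1)$ eliminates any parity distinction in $n$.
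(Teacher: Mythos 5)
Your proposal is correct and follows essentially the same route as the paper: the discussion preceding the theorem in Section 5 rules out condition (ii) of Proposition~\ref{prop3:stringeigenvalues} (and the dual exception \eqref{eq3:dual}, via self-conjugacy of representations of $\SO(2n+1)$) and reduces condition (i) to injectivity of the form \eqref{eq5:formSO(2n+1)SO(2n)} on $\TT_{G,\mu}$, exactly as you do, with the first assertion coming from the refinement in Remark~3.8. Your computation of $\lambda(C,\pi_{\Lambda_0})$ and the use of $a_1=b_1$ to kill condition (ii) match the paper's argument verbatim.
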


The next corollary gives infinitely many $K$-types $\tau_\mu$ for which the assumption in the theorem is valid. 
In this case, the representation-spectral converse was known to hold only for the trivial representation of $K$ (see \cite[Prop.~3.2]{Pesce96}). 
The proof is straightforward and left to the reader.

\begin{corollary}\label{cor5:SO(2n+1)SO(2n)situations}
Let $G=\SO(2n+1)$, $K=\SO(2n)$, and $\tau_\mu\in\widehat K$ with $\mu= \sum_{i=1}^{n} b_i\varepsilon_i\in \PP^{+}(K)$.
Assume any of the following conditions:
\begin{enumerate}\renewcommand{\labelenumi}{(\roman{enumi})}
\item\label{item5:b_1<=3} $b_1-|b_n|\leq 3$; 
		
\item\label{item5:onejump} there is $2\leq j\leq n$ such that $b_i-b_{i+1}=0$ for all $1\leq i\leq n-1$, $i\neq j$;

\item\label{item5:twojumps} $b_2-|b_n|\leq 2$, $b_1$ arbitrary.
\end{enumerate}
Then, the same consequences as in Theorem~\ref{thm4:formSO(2n)SO(2n-1)} hold, in particular, the strong representation-spectral converse is valid for $(\SO(2n),\SO(2n-1),\tau_\mu)$.
\end{corollary}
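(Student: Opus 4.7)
The plan is to verify, for each of the three conditions (i)-(iii), the hypothesis of Theorem~\ref{thm5:formSO(2n+1)SO(2n)}, namely that the quadratic form
\begin{equation*}
F(a_2,\dots,a_n):= \sum_{i=2}^{n} a_i(a_i+1+2(n-i))
\end{equation*}
takes distinct values on the set $\TT_{G,\mu}$ of \eqref{eq5:T_GmuSO(2n+1)}. The conclusion then follows at once from that theorem. The argument is entirely parallel to the one in Corollary~\ref{cor4:SO(2n)SO(2n-1)situations}, with notational adjustments coming from the fact that now $\rho_G=\sum_{i}(n-i+\tfrac12)\varepsilon_i$, so the linear weights are $1+2(n-i)$ instead of $2(n-i)$.

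The first observation I would record is monotonicity: since $b_1\geq\dots\geq b_{n-1}\geq |b_n|$, every tuple $(a_2,\dots,a_n)\in \TT_{G,\mu}$ has non-negative entries (indeed $a_i\geq b_i\geq 0$ for $i<n$ and $a_n\geq |b_n|\geq 0$), so each summand $a_i\mapsto a_i(a_i+1+2(n-i))$ is strictly increasing on the relevant range. This immediately handles case (ii): the hypothesis forces $a_i=b_i$ for all $i\neq j$, leaving $F$ as a function of the single non-negative integer $a_j\in[|b_n|,b_{j-1}]$ (or $a_j\in[b_j,b_{j-1}]$ when $j<n$), which is strictly monotonic.

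For case (i), suppose $F(a_2,\dots,a_n)=F(a_2',\dots,a_n')$ for two tuples in $\TT_{G,\mu}$ and set $d_i=a_i-a_i'$. The identity
\begin{equation*}
0=\sum_{i=2}^{n} d_i\bigl(a_i+a_i'+1+2(n-i)\bigr)
\end{equation*}
together with the bound $|d_i|\leq b_1-|b_n|\leq 3$ and positivity of the weights $a_i+a_i'+1+2(n-i)$ allows the same finite sign-pattern case analysis used in the proof of Corollary~\ref{cor4:SO(2n)SO(2n-1)situations}(i) to conclude $d_i=0$ for every $i$; the odd linear coefficients even slightly simplify the parity bookkeeping compared with the even-$2(n-i)$ case. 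The main obstacle, as for the odd sphere, is case (iii), where the range of $a_2\in[b_2,b_1]$ is unbounded. Here I would adapt the trick in the proof of Corollary~\ref{cor4:SO(2n)SO(2n-1)situations}(iii): reduce modulo the finitely many configurations of $(a_3,\dots,a_n)$ permitted by $b_2-|b_n|\leq 2$, then observe that
\begin{equation*}
(a_2-a_2')\bigl(a_2+a_2'+1+2(n-2)\bigr)= \sum_{i=3}^{n}(a_i'-a_i)\bigl(a_i+a_i'+1+2(n-i)\bigr)
\end{equation*}
has a right-hand side of bounded absolute value, while a parity argument (the right-hand side is always of a definite parity modulo~$2$) forces $a_2-a_2'$ to be even, hence $\geq 2$ in absolute value, making the left-hand side strictly larger than the right-hand side unless $a_2=a_2'$. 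Once $a_2=a_2'$, case (i)-type arguments on the remaining coordinates finish the proof.

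Thus the only genuinely non-routine step is the parity argument of case (iii); cases (i) and (ii) reduce to finite checks that exploit monotonicity of $F$ in each coordinate.
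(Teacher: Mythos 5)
Your case (ii) is fine: under that hypothesis $\TT_{G,\mu}$ has a single free coordinate, and the form is strictly increasing in it. The gap is in cases (i) and (iii), where you import the parity argument from Corollary~\ref{cor4:SO(2n)SO(2n-1)situations} and assert that the odd linear coefficients ``simplify the parity bookkeeping'' and that the right-hand side of your last display has a definite parity which forces $a_2-a_2'$ to be even. This is exactly backwards. In the odd-sphere case a summand $(a_i-a_i')\bigl(a_i+a_i'+2(n-i)\bigr)$ is odd precisely when $a_i-a_i'$ is odd, which is what makes the mod~$2$ argument bite. Here every summand $(a_i-a_i')\bigl(a_i+a_i'+1+2(n-i)\bigr)$ is even regardless of the parity of $a_i-a_i'$ (since $a_i-a_i'$ and $a_i+a_i'$ have the same parity, an odd difference makes $a_i+a_i'+1$ even), so both sides of your identity are always even, the congruence carries no information, and the magnitude estimate alone does not close the argument.

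This is not a repairable slip: the injectivity you are trying to verify genuinely fails under hypotheses (i) and (iii). Take $n=5$, so $G=\SO(11)$, $K=\SO(10)$, and $\mu=3\varepsilon_1+2\varepsilon_2+\varepsilon_3$, which satisfies both $b_1-|b_5|=3$ and $b_2-|b_5|=2$. The tuples $(3,1,0,0)$ and $(2,2,1,0)$ both lie in $\TT_{G,\mu}$ of \eqref{eq5:T_GmuSO(2n+1)}, and the form \eqref{eq5:formSO(2n+1)SO(2n)} takes the value $3\cdot10+1\cdot6=36=2\cdot9+2\cdot7+1\cdot4$ at both. Equivalently, $\Lambda_0=3\varepsilon_1+3\varepsilon_2+\varepsilon_3$ and $\Lambda_0'=3\varepsilon_1+2\varepsilon_2+2\varepsilon_3+\varepsilon_4$ both lie in $\PP_{\tau_\mu}$, satisfy $\langle\varepsilon_1,\Lambda_0\rangle=\langle\varepsilon_1,\Lambda_0'\rangle=3$ and $\lambda(C,\pi_{\Lambda_0})=\lambda(C,\pi_{\Lambda_0'})=72$, so by Proposition~\ref{prop3:stringeigenvalues}(i) the two strings have coincident Casimir eigenvalues at every level; since every representation of $\SO(2n+1)$ is self-dual and $\pi_{\Lambda_0}\not\simeq\pi_{\Lambda_0'}$, condition \eqref{eq3:dual} cannot hold, infinitely many eigenvalues in $\mathcal E_{\tau_\mu}$ are non-tame, and Theorem~\ref{thm3:tauiso=>tauequiv} is inapplicable. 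The same phenomenon occurs for three unit jumps at positions $i_1<i_2<i_3$ whenever $n-i_1=(n-i_2)+(n-i_3)+|b_n|$, and in case (iii) whenever your final equation admits the solution $a_2-a_2'=1$, which the missing parity obstruction no longer excludes. So your argument establishes (ii) only; for (i) and (iii) the route through Theorem~\ref{thm5:formSO(2n+1)SO(2n)} breaks down, and since the paper merely declares the proof ``straightforward and left to the reader,'' the statement itself appears to need either extra hypotheses or a method that handles non-tame eigenvalues.
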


Similarly to Example~\ref{ex4:SO(2n)unknown-cases}, one can exhibit two disjoint strings having the same eigenvalues, showing that condition (i) in Corollary~\ref{cor5:SO(2n+1)SO(2n)situations} is optimal.

\begin{example}\label{ex5:unknown-casesSO(2n+1)}
For $n=3$, let $\Lambda_0=4\varepsilon_1+4\varepsilon_2+\varepsilon_3$ and $\Lambda_0'=4\varepsilon_1+3\varepsilon_2+3\varepsilon_3$, thus $\langle \dir,\Lambda_0\rangle = \langle \dir,\Lambda_0'\rangle= 4$ and $\lambda(C,\pi_{\Lambda_0}) = \lambda(C,\pi_{\Lambda_0'}) = 66$, which gives $\mathcal E(\dir,\lambda_0) = \mathcal E(\dir,\lambda_0')$ by Proposition~\ref{prop3:stringeigenvalues}.

For any $n\geq 4$, $\Lambda_0= 4\varepsilon_1+4\varepsilon_2+\varepsilon_3+ \varepsilon_4$ and $\Lambda_0'= 4\varepsilon_1+3\varepsilon_2+ 3\varepsilon_3$ satisfy $\langle \dir,\Lambda_0\rangle = \langle \dir,\Lambda_0'\rangle= 4$ and $\lambda(C,\pi_{\Lambda_0}) = \lambda(C,\pi_{\Lambda_0'}) = 20n-6$, thus $\mathcal E(\dir,\lambda_0) = \mathcal E(\dir,\lambda_0')$ by Proposition~\ref{prop3:stringeigenvalues}.
	
Moreover, for $\mu= 4\varepsilon_1+3\varepsilon_2$ when $n=3$ and $\mu= 4 \varepsilon_1+3\varepsilon_2+\varepsilon_3$ when $n\geq4$, we have that $\Lambda_0,\Lambda_0' \in \PP_{\tau_\mu}$. 
Consequently, the strong representation-spectral converse for $(\SO(2n+1),\SO(2n),\tau_\mu)$ does not follows from Theorem~\ref{thm3:tauiso=>tauequiv}.
\end{example}

Similarly as shown in Table~\ref{table:strings}, there exist many examples of families of strings having the same Casimir eigenvalues.

\subsubsection*{Applications to $p$-spectra}
We now study the {case of} $p$-forms by considering the representation $\tau_p$ (see Definition~\ref{def2:tau_p}).
Then $\tau_p$ is irreducible with highest weight $\varepsilon_1+\dots+\varepsilon_p$ for every $0\leq p\leq n-1$, and $\tau_n\simeq \tau_n^+\oplus \tau_n^-$, where $\tau_n^\pm$ is the irreducible representation of $K\simeq \SO(2n)$ with highest weight $\varepsilon_1+\dots+\varepsilon_{n-1}\pm \varepsilon_n$.

\begin{theorem}\label{thm5:p-isoSO(2n+1)}
The strong representation-spectral converse is valid for $(\SO(2n+1), \SO(2n) ,\tau_{p})$ for all $p$.
\end{theorem}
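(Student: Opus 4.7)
The plan is to invoke Corollary~\ref{cor5:SO(2n+1)SO(2n)situations} and Theorem~\ref{thm3:tauiso=>tauequiv} after identifying $\widehat G_{\tau_p}$ via the branching rule~\eqref{eq5:SO(2n+1)entrelazamiento}. By the fibrewise Hodge star, $\tau_p\simeq \tau_{2n-p}$ as $K$-modules, so it suffices to treat $0\le p\le n$.

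First, for $0\le p\le n-1$, the representation $\tau_p$ is irreducible with highest weight $\mu_p=\varepsilon_1+\dots+\varepsilon_p$; writing $\mu_p=\sum_{i=1}^n b_i\varepsilon_i$, one has $b_1\le 1$ and $b_n=0$, hence $b_1-|b_n|\le 1\le 3$. Condition~(i) of Corollary~\ref{cor5:SO(2n+1)SO(2n)situations} applies directly and delivers the strong representation-spectral converse for $(\SO(2n+1),\SO(2n),\tau_p)$.

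The middle degree $p=n$ is more delicate because $\tau_n=\tau_n^+\oplus\tau_n^-$ is reducible, with irreducible summands of highest weights $\mu^\pm=\varepsilon_1+\dots+\varepsilon_{n-1}\pm\varepsilon_n$. I would next apply~\eqref{eq5:SO(2n+1)entrelazamiento} with $b_i=1$ for $i<n$ and $|b_n|=1$: this forces $a_2=\dots=a_{n-1}=1$ together with $1\le a_n\le b_{n-1}=1$, so $a_n=1$ and $a_1\ge 1$. Both signs yield the same single string,
\begin{equation*}
\widehat G_{\tau_n^+}=\widehat G_{\tau_n^-}=\String{\varepsilon_1,\,\varepsilon_1+\varepsilon_2+\dots+\varepsilon_n}.
\end{equation*}
By Remark~\ref{rem2:tau-reducible}, $\widehat G_{\tau_n}=\widehat G_{\tau_n^+}\cup\widehat G_{\tau_n^-}$ coincides with this same single string, so $\mathcal P_{\tau_n}=\{\varepsilon_1+\dots+\varepsilon_n\}$ is a singleton. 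The hypothesis of Theorem~\ref{thm3:tauiso=>tauequiv} is then vacuous: there is no distinct pair $\Lambda_0\ne\Lambda_0'$ in $\mathcal P_{\tau_n}$ to inspect, and the theorem gives that every eigenvalue in $\mathcal E_{\tau_n}$ is tame (each arising from a unique $\pi\in\widehat G_{\tau_n}$, with multiplicity factor $[\tau_n:\pi|_K]=2$), whence the strong representation-spectral converse.

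The main anticipated obstacle is the reducibility of $\tau_n$, but it is disarmed by the observation that the two irreducible summands $\tau_n^\pm$ produce exactly the same set of $\tau$-spherical representations of $G$. This collapses $\widehat G_{\tau_n}$ to a single string and renders the hypotheses of Theorem~\ref{thm3:tauiso=>tauequiv} automatic, so no coincidence analysis of Casimir eigenvalues across different strings is needed.
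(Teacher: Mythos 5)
Your proposal is correct and follows essentially the same route as the paper: Corollary~\ref{cor5:SO(2n+1)SO(2n)situations}~(i) disposes of $0\le p\le n-1$, and the middle degree $p=n$ is handled by noting that the two irreducible summands $\tau_n^{\pm}$ have identical sets of spherical representations, so $\widehat G_{\tau_n}=\widehat G_{\tau_n^+}=\widehat G_{\tau_n^-}$ and the hypotheses of Theorem~\ref{thm3:tauiso=>tauequiv} are satisfied. Your explicit computation that $\mathcal P_{\tau_n}$ is the singleton $\{\varepsilon_1+\dots+\varepsilon_n\}$ is a correct (and slightly more detailed) version of the paper's appeal to Lemma~\ref{lem5:hatG_tauSO(2n+1)} and Corollary~\ref{cor5:SO(2n+1)SO(2n)situations}.
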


\begin{proof}
The {assertion for} $0\leq p\leq n-1$ follows from Corollary~\ref{cor5:SO(2n+1)SO(2n)situations}. 
We thus assume $p=n$. 
Lemma~\ref{lem5:hatG_tauSO(2n+1)} implies $\widehat G_{\tau_n}=\widehat G_{\tau_n^+} = \widehat G_{\tau_n^-}$. 
It follows immediately that condition (i) in Theorem~\ref{thm3:tauiso=>tauequiv} holds for $\tau_n$ since it holds for $\tau_n^\pm$ by Corollary~\ref{cor5:SO(2n+1)SO(2n)situations}. 
\end{proof}

\section{Complex projective spaces} 
\label{sec:SU}
We now consider complex projective spaces. 
Throughout the section, for any $n\geq2$, we set $G=\SU(n+1)$ and 
$$
K =\left\{ \begin{pmatrix} g \\ & z\end{pmatrix}: g\in \U(n),\, z\in\U(1),\, \det(g)z=1 \right\}  \simeq \op{S}(\U(n)\times\U(1)). 
$$
Thus, $G/K$ is diffeomorphic to $P^n(\C)$.
We consider the metric induced by the inner product on $\mathfrak g$ given by $\langle X,Y\rangle = -\tr(XY)$, which gives sectional curvature $K$ satisfying $1\leq K\leq 4$.

We fix the maximal torus in $G$ given by 
\begin{equation*}
T= \{\diag(e^{i\theta_1},\dots,e^{i\theta_{n+1}}): \theta_j\in\R,\; \textstyle\sum_{j=1}^{n+1} \theta_j=0\}. 
\end{equation*}
The associated Cartan subalgebra $\mathfrak t_{\C}$ of $\mathfrak g_\C$ is a subspace of codimension one of 
$$
\mathfrak u_\C:= \{\diag(\theta_1,\dots,\theta_{n+1}):\theta_j\in\C\}. 
$$
We {let} $\varepsilon_j\in\mathfrak u_\C^*$ {be} given by 
$
\varepsilon_j(\diag(\theta_1,\dots,\theta_{n+1})) = \theta_j, 
$
for any $1\leq j\leq n+1$.
One has that 
$$
\mathfrak t_\C^* = \{\textstyle \sum_{j=1}^{n+1} a_j\varepsilon_j: a_j\in\C,\, \sum_{j=1}^{n+1}a_j=0\}. 
$$
It turns out that $\Phi(\mathfrak g_\C,\mathfrak t_\C)= \{\pm (\varepsilon_i-\varepsilon_j): 1\leq i<j\leq n+1\}$ and 
$$
\PP(G)= \{\textstyle \sum_{j=1}^{n+1} a_j\varepsilon_j: a_j-a_{j+1}\in\Z,\;  \sum_{j=1}^{n+1}a_j=0\}.
$$ 
For any $\sum_{j=1}^{n+1} a_j\varepsilon_j\in \PP(G)$, it follows that $(n+1)a_j\in\Z$ for every $j$. 
We extend $\langle\cdot,\cdot\rangle$ to $\mathfrak u_\C$ (and  to  its dual) by $\langle X,Y\rangle = -\tr(XY)$. 
Thus $\langle \varepsilon_i,\varepsilon_j\rangle =\delta_{i,j}$ for all $1\leq i,j\leq n+1$.
{For} the standard order, one has  the simple roots $\{\varepsilon_1-\varepsilon_{2},\dots,\varepsilon_{n}-\varepsilon_{n+1}\}$, $\Phi^+(\mathfrak g_\C,\mathfrak t_\C)= \{\varepsilon_i-\varepsilon_j: 1\leq i<j\leq n+1\}$, and furthermore 
$$
\PP^{+}(G) = \{\textstyle \sum_{j=1}^{n+1} a_j\varepsilon_j\in \PP(G): a_1\geq\dots\geq a_{n+1}\}. 
$$

The group $K$ is reductive with $1$-dimensional center $Z(K)= \{\diag(e^{i\theta}, \dots, e^{i\theta},e^{-in\theta}):\theta\in\R\}$. 
We note that $T$ is, as well, a maximal torus of $K$. 
In this case we have that 
$\Phi(\mathfrak k_\C,\mathfrak t_\C)= \{\pm (\varepsilon_i-\varepsilon_j): 1\leq i<j\leq n\}$ and $\PP(K)=\PP(G)$. 
We pick on $\mathfrak t_\C^*$ the same order as above, so the simple roots are $\{\varepsilon_1-\varepsilon_{2},\dots,\varepsilon_{n-1}-\varepsilon_{n}  \}$, $\Phi^+(\mathfrak k_\C,\mathfrak t_\C)= \{\varepsilon_i-\varepsilon_j: 1\leq i<j\leq n\}$, and 
$$
\PP^{+}(K)= \{\textstyle \sum_{j=1}^{n+1} b_j\varepsilon_j\in \PP(K)=\PP(G): b_1 \geq\dots\geq b_n\}.
$$ 

Here, we introduce a tool that facilitates the parametrization of elements in $\PP(G)$.
We define the projection $\pr$ from  $\mathfrak u_\R^*= \op{span}_\R\{\varepsilon_1,\dots, \varepsilon_{n+1}\} $ to $\mathfrak t_\R^*= \{\sum_{j=1}^{n+1} a_j \varepsilon_j\in \mathfrak u_\R^+: \sum_{j=1}^{n+1} a_j=0\}$
given by 
$$
 \pr\left(\sum_{j=1}^{n+1} a_j\varepsilon_j\right) = \sum_{j=1}^{n+1} a_j\varepsilon_j-\frac{1}{n+1} \left(\sum_{j=1}^{n+1}a_j\right) (\varepsilon_1+\dots+\varepsilon_{n+1}).
$$ 
Note that $\pr(\bigoplus_j \Z \varepsilon_j)=  \PP(G)$. 
For example, the standard representation of $G$ has highest weight $\pr(-\varepsilon_{n+1})$, its contragradient representation has highest weight  $\pr(\varepsilon_{1})$, and the $p$-th fundamental weight has highest weight $\pr(\varepsilon_1+\dots+ \varepsilon_p)$ for any $1\leq p\leq n$.

We now recall the branching law in the present case.  
The reader may see equivalent statements in \cite[\S5]{IkedaTaniguchi78} and \cite[\S3]{Halima07}, where {they}  utilize non-standard parameterizations of $\PP^{+}(G)$ and $\PP^{+}(K)$. 
We use exactly the same as in \cite[page 13]{Camporesi05Pacific}.

\begin{lemma}\label{lem6:SUbranchinglaw}
Let $G=\SU(n+1)$ and $K=\op{S}(\U(n)\times\U(1))$. 
If $\Lambda=\sum_{j=1}^{n+1} a_j\varepsilon_j\in \PP^{+}(G)$ and $\mu=\sum_{j=1}^{n+1} b_j\varepsilon_j\in \PP^{+}(K)$, then $\tau_\mu$ occurs in  $\pi_\Lambda|_K$ if and only if $a_{1}-b_1\in \Z$ and 
\begin{equation}\label{eq6:SUentrelazamiento}
a_1\geq b_1\geq a_2\geq b_2\geq \dots \geq a_{n}\geq b_{n}\geq a_{n+1}.
\end{equation} 
Furthermore, if this is the case,  $ \dim \Hom_K(\tau_\mu,\pi_\Lambda|_K)=1$. 
\end{lemma}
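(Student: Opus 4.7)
My approach is to reduce this to the classical Gelfand--Tsetlin branching law for $\U(n+1) \downarrow \U(n) \times \U(1)$ by lifting representations through the central extensions $\SU(n+1) \hookrightarrow \U(n+1)$ and $K \hookrightarrow \U(n) \times \U(1)$. As a first step, I would verify that $\pi_\Lambda$ is the restriction to $\SU(n+1)$ of the irreducible $\U(n+1)$-representation $\widetilde{V}_\lambda$ with integral highest weight $\lambda = (\lambda_1,\ldots,\lambda_{n+1})$ given by $\lambda_j := a_j - a_{n+1}$. This follows immediately from the definition of $\pr$: since $\sum a_j = 0$ one has $\pr(\sum_j \lambda_j \varepsilon_j) = \Lambda$, and the corresponding $\U(n+1)$-highest weight has $\lambda_{n+1} = 0$ with $\lambda_1 \geq \cdots \geq \lambda_{n+1}$.

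Next I would invoke the classical multiplicity-free decomposition $\widetilde{V}_\lambda|_{\U(n) \times \U(1)} = \bigoplus V_\nu \otimes \C_k$, where the sum runs over integer $n$-tuples $\nu_1 \geq \cdots \geq \nu_n$ satisfying the interlacing $\lambda_i \geq \nu_i \geq \lambda_{i+1}$ for $i=1,\ldots,n$, and $k = |\lambda| - |\nu|$ is the $\U(1)$-weight. Restricting each summand further to $K$ produces an irreducible $K$-module whose highest weight, viewed as a $T$-functional and normalized as an element of $\PP(K)$, equals $\pr\!\bigl(\sum_j \nu_j\varepsilon_j + k\varepsilon_{n+1}\bigr)$. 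A direct computation, substituting $b_j = \nu_j + a_{n+1}$ for $j \leq n$ and using the identity $(|\nu| + k)/(n+1) = -a_{n+1}$, converts this expression into $\sum_{j=1}^{n+1} b_j\varepsilon_j$ and rewrites the interlacing as $a_i \geq b_i \geq a_{i+1}$ for $i=1,\ldots,n$, which is exactly \eqref{eq6:SUentrelazamiento}.

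Finally, the integrality condition $a_1 - b_1 \in \Z$ is equivalent, via $a_i - a_{i+1} \in \Z$ and $b_i - b_{i+1} \in \Z$, to $a_{n+1} - b_{n+1} \in \Z$, which is necessary and sufficient for the $T$-weight $\sum b_j\varepsilon_j$ of $\tau_\mu$ to occur among the $T$-weights of $\pi_\Lambda$ (whose $\varepsilon_{n+1}$-coordinate lies in $a_{n+1} + \Z$). Multiplicity one then transfers directly from the $\U$-branching, since admissible $\mu \in \PP^{+}(K)$ correspond bijectively to interlacing pairs $(\nu, k)$ once the integrality condition is met. The main difficulty I anticipate, though essentially bookkeeping rather than conceptual, is the careful coordination of the four weight parameterizations in play (for $\SU(n+1)$, $\U(n+1)$, $K$ and $\U(n) \times \U(1)$), ensuring that $\pr$ shifts things consistently so no spurious constant appears in the interlacing inequalities.
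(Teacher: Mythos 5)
Your argument is correct. Note, however, that the paper does not actually prove this lemma: it is stated as a recalled classical fact, with pointers to Ikeda--Taniguchi, Ben Halima, and Camporesi for equivalent formulations in other weight parameterizations. Your derivation --- lifting $\pi_\Lambda$ to the $\U(n+1)$-representation with highest weight $\lambda_j=a_j-a_{n+1}$, applying the Gelfand--Tsetlin branching $\U(n+1)\downarrow\U(n)\times\U(1)$, and pushing the interlacing pattern through $\pr$ --- is exactly the standard route underlying those references, and your bookkeeping checks out: $|\nu|+k=|\lambda|=-(n+1)a_{n+1}$ gives $b_j=\nu_j+a_{n+1}$ and $b_{n+1}=k+a_{n+1}$, turning $\lambda_i\geq\nu_i\geq\lambda_{i+1}$ into \eqref{eq6:SUentrelazamiento}, and the integrality of the $\nu_j$ is precisely the condition $a_1-b_1\in\Z$. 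The only point worth making fully explicit is why multiplicity one survives the final restriction from $\U(n)\times\U(1)$ to $K$: two summands $V_\nu\otimes\C_k$ and $V_{\nu'}\otimes\C_{k'}$ could only become isomorphic over $K$ if they differed by a power of the character $(g,z)\mapsto\det(g)z$, which would shift $|\nu|+k$ by a nonzero multiple of $n+1$ and contradict $|\nu|+k=|\nu'|+k'=|\lambda|$; so distinct interlacing data give distinct $K$-types, as you implicitly use.
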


The next goal is to show that $\widehat G_{\tau}$ is a finite and disjoint union of strings with direction $\dir := \varepsilon_1-\varepsilon_{n+1}$ for any finite-dimensional representation $\tau$ of $K$.
See \cite[\S{}3]{Camporesi05Pacific} for the same result written in a different way.
Let $\mu=\sum_{j=1}^{n+1} b_j\varepsilon_j\in \PP^{+}(K)$, thus $\sum_{j=1}^{n+1}b_j=0$, $(n+1)b_j\in\Z$ for all $j$ and $b_1\geq\dots\geq b_n$. 
We set 
\begin{equation}\label{eq6:P_tauSU(n+1)}
\mathcal P_{\tau_\mu} = \left\{ 
\sum_{j=1}^{n+1} a_j\varepsilon_j: 
\begin{array}{l}
a_j-b_1\in\Z\text{ for all $2\leq j\leq n$},\\
b_1\geq a_2\geq b_2\geq \dots \geq a_n\geq b_n,\\
a_1=b_1+\max(0,-r),\\ a_{n+1}=b_n-\max(0,r), \text{where} \\
 r= b_1+b_n+\sum_{j=2}^{n} a_j
\end{array}
\right\}.
\end{equation}
It is a simple matter to check that $\mathcal P_{\tau_\mu}$ is finite and included in $\PP^{+}(G)$. 

\begin{lemma}\label{lem6:hatG_tauSU(n+1)}
For $\mu=\sum_{j=1}^{n+1} b_j\varepsilon_j\in \PP^{+}(K)$, we have that 
$$
\widehat G_{\tau_\mu} = \bigcup_{\Lambda_0\in \mathcal P_{\tau_\mu}} \String{\Lambda_0,\varepsilon_1-\varepsilon_{n+1}} = 
\bigcup_{\Lambda_0\in \mathcal P_{\tau_\mu}} \{\pi_{\Lambda_0+k(\varepsilon_1-\varepsilon_{n+1})}: k\geq0\}. 
$$
{Furthermore,} the union is finite and disjoint.
\end{lemma}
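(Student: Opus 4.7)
The plan is to combine the explicit branching law in Lemma~\ref{lem6:SUbranchinglaw} with the arithmetic identities encoded in the definition \eqref{eq6:P_tauSU(n+1)} of $\PP_{\tau_\mu}$, producing an explicit base and shift for every highest weight in $\widehat G_{\tau_\mu}$.

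First I would observe that $\dir = \varepsilon_1 - \varepsilon_{n+1}$ is the highest weight of the complexified adjoint representation of $\SU(n+1)$, and that $\pi_{\dir}$ lies in $\widehat G_{1_K}$; one verifies this directly by applying Lemma~\ref{lem6:SUbranchinglaw} with $\mu = 0$. By the general string construction recalled in the discussion right after the string lemma in Section~\ref{sec:sufficient-conditions}, this already implies that $\pi_{\Lambda + k\dir} \in \widehat G_{\tau_\mu}$ whenever $\pi_\Lambda \in \widehat G_{\tau_\mu}$ and $k \in \N_0$. Finiteness of $\PP_{\tau_\mu}$ is transparent from \eqref{eq6:P_tauSU(n+1)}, since $(a_2,\ldots,a_n)$ is constrained to the finite set $\{x\in b_1+\Z : b_n \leq x\leq b_1\}^{n-1}$ and $(a_1,a_{n+1})$ is then determined by the sign of $r$.

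Next I would prove the inclusion $\supseteq$ by verifying for each $\Lambda_0 = \sum_j a_j\varepsilon_j \in \PP_{\tau_\mu}$ that $\Lambda_0 \in \PP^+(G)$ and that the interlacing \eqref{eq6:SUentrelazamiento} with $\mu$ holds. Dominance is immediate from $a_1 \geq b_1$, the middle chain in \eqref{eq6:P_tauSU(n+1)}, and $a_{n+1} \leq b_n$. The trace-zero condition $\sum_j a_j = 0$ reduces to the elementary identity $\max(0,-r) - \max(0,r) = -r$, which gives $a_1 + a_{n+1} = b_1 + b_n - r = -\sum_{j=2}^{n} a_j$ by the definition of $r$. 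Integrality follows once one shows all $a_j$ lie in the coset $b_1 + \Z$: for $2\leq j\leq n$ this is assumed, and then $r \in (n+1)b_1 + \Z = \Z$ (using $\mu \in \PP(G)$, which forces $(n+1)b_1\in\Z$), hence $a_1,a_{n+1} \in b_1+\Z$ as well. Combined with the first paragraph, this yields $\String{\dir,\Lambda_0} \subseteq \widehat G_{\tau_\mu}$.

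For the reverse inclusion and disjointness, I would start with $\pi_\Lambda \in \widehat G_{\tau_\mu}$, $\Lambda = \sum_j a_j\varepsilon_j$. By \eqref{eq6:SUentrelazamiento} we have $a_1 - b_1 \geq 0$ and $b_n - a_{n+1} \geq 0$; set $k := \min(a_1-b_1, b_n-a_{n+1}) \in \N_0$ and $\Lambda_0 := \Lambda - k\dir$. Using $\sum_j a_j = 0$ one computes $r := b_1 + b_n + \sum_{j=2}^{n} a_j = -(a_1 - b_1) + (b_n - a_{n+1})$, and a short case analysis on the sign of $r$ matches the extremal entries of $\Lambda_0$ with $b_1+\max(0,-r)$ and $b_n - \max(0,r)$, placing $\Lambda_0 \in \PP_{\tau_\mu}$. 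Disjointness of the strings is then automatic: if $\Lambda_0 + k\dir = \Lambda_0' + k'\dir$ with both bases in $\PP_{\tau_\mu}$, then the middle coordinates of $\Lambda_0$ and $\Lambda_0'$ coincide, so $r$ is the same for both and the extremal formulas force $\Lambda_0 = \Lambda_0'$ and $k=k'$. The only delicate point in the argument is this case analysis matching $\Lambda_0$ with the formulas \eqref{eq6:P_tauSU(n+1)}; everything else is routine bookkeeping driven by Lemma~\ref{lem6:SUbranchinglaw}.
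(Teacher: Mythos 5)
Your proof is correct and follows essentially the same route as the paper's: the crux in both arguments is the reverse inclusion, where one sets $k=\min(a_1-b_1,\,b_n-a_{n+1})$, forms $\Lambda_0=\Lambda-k\dir$, and matches its extremal coordinates against the formulas in \eqref{eq6:P_tauSU(n+1)} by a case analysis on the sign of $r$. The only differences are minor: for the forward inclusion the paper simply re-verifies the interlacing \eqref{eq6:SUentrelazamiento} for $\Lambda_0+k\dir$ directly from Lemma~\ref{lem6:SUbranchinglaw} rather than invoking the general $\widehat G_{1_K}$-string fact from Section~\ref{sec:sufficient-conditions} as you do, and your explicit verifications of finiteness, of $\PP_{\tau_\mu}\subset\PP^{+}(G)$ (trace zero and integrality), and of disjointness supply details that the paper leaves to the reader.
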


\begin{proof}
By using Lemma~\ref{lem6:SUbranchinglaw}, it is a simple matter to check that $\Lambda_0 \in \PP_{\tau_\mu}$ {implies that} $\pi_{k\dir+\Lambda_0}\in \widehat G_{\tau_\mu}$ for all $k\geq0$. 
To see the converse, suppose $\pi_{\Lambda}\in \widehat G_{\tau_\mu}$ for some $\Lambda=\sum_{i=1}^{n+1} a_i\varepsilon_i \in \PP^{+}(G)$, thus $a_i-b_j\in\Z$ for all $i,j$ and \eqref{eq6:SUentrelazamiento} holds. 
We set $k=\min(a_1-b_1,b_n-a_{n+1})$. 
It follows that 
$$
\Lambda_0:= \Lambda - k\dir = \Lambda - k(\varepsilon_1-\varepsilon_{n+1}) = (a_1-k)\varepsilon_1+\sum_{i=2}^{n} a_i\varepsilon_i+(a_{n+1}+k)\varepsilon_{n+1}
$$ 
still satisfies \eqref{eq6:SUentrelazamiento}, thus $\pi_{\Lambda_0}\in \widehat G_{\tau_\mu}$. 
It suffices to show that $\Lambda_0 \in \PP_{\tau_\mu}$. 
The first two conditions in \eqref{eq6:P_tauSU(n+1)} are clearly satisfied.
To check the last two conditions, set $r= b_1+b_n+\sum_{j=2}^{n} a_j$.
Thus $r = b_1+b_n-a_1-a_{n+1}$ since $\sum_{i=1}^{n+1}a_i=0$. 
Now, if $a_1-b_1\leq b_n-a_{n+1}$, then $k=a_1-b_1$, $r\geq 0$, $a_1-k=b_1=b_1+\max(0,-r)$, and $a_{n+1}+k=a_{n+1}+a_1-b_1=b_n-r=b_n-\max(0,r)$, which shows that $\Lambda_0\in \PP_{\tau_\mu}$. 
The case $a_1-b_1> b_n-a_{n+1}$ is completely analogous. 
\end{proof}

We now give  particular situations when Theorem~\ref{thm3:tauiso=>tauequiv} can be applied. 
The next result considers the case of two jumps of length  one and also {of} one jump of length two {within} the first $n$ coefficients of $\mu$, with a few technical exceptions. 
The proof  reduces to showing that the conditions (i) and (ii) in {Proposition~\ref{prop3:stringeigenvalues}} are not satisfied.

\begin{theorem} \label{thm6:SUtwo1-jumps}
Let $G=\SU(n+1)$ and $K=\op{S}(\U(n)\times\U(1))$.
For $l,m,s\in\Z$ such that $l,m\geq 0$ and $l+m\leq n$, let $\tau_{{l,m,s}}$ be the irreducible representation of $K$ with highest weight
\begin{equation}\label{eq6:SUmu_lms}
\mu_{l,m,s}:= \pr\left(\sum_{j=1}^l \varepsilon_j - \sum_{j=n+1-m}^n \varepsilon_j  +s\varepsilon_{n+1} \right). 
\end{equation}
Assume $s\neq 0$ when $l\neq m$, and $s\neq 2(n-2l)$ when $l=m<n/2$. 
Then, the representation-spectral converse is valid for $(G,K,\tau_{l,m,s})$.
\end{theorem}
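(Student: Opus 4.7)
The plan is to verify the hypotheses of Theorem~\ref{thm3:tauiso=>tauequiv} for the triple $(G,K,\tau_{l,m,s})$, which will in fact yield the stronger conclusion that the strong representation-spectral converse holds. By Lemma~\ref{lem6:hatG_tauSU(n+1)}, the set $\widehat{G}_{\tau_{l,m,s}}$ already decomposes as a finite disjoint union of strings with direction $\omega = \varepsilon_1 - \varepsilon_{n+1}$, so the task reduces to checking, for each pair of distinct bases $\Lambda_0, \Lambda_0' \in \mathcal{P}_{\tau_{l,m,s}}$, that neither condition (i) nor condition (ii) of Proposition~\ref{prop3:stringeigenvalues} holds, unless the duality relation \eqref{eq3:dual} is satisfied.

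First I would give an explicit description of $\mathcal{P}_{\tau_{l,m,s}}$. Setting $c = (l-m+s)/(n+1)$, the highest weight $\mu_{l,m,s}$ has the block-constant form $b_j = 1-c, -c, -1-c$ on the first $n$ coordinates according to whether $j \leq l$, $l < j \leq n-m$, or $n-m+1 \leq j \leq n$, and $b_{n+1} = s-c$. The interleaving conditions in \eqref{eq6:P_tauSU(n+1)} then force each $a_j$ with $2 \le j \le n$ to equal either $b_{j-1}$ or $b_j$, so the only genuine freedom lies at the two jump positions $j = l+1$ and $j = n-m+1$ (with fewer degrees of freedom in the boundary cases $l=0$, $m=0$, or $l+m=n$). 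Parametrising by $(\epsilon_1, \epsilon_2) \in \{0,1\}^2$ with $a_{l+1} = -c+\epsilon_1$ and $a_{n-m+1} = -c-\epsilon_2$, a short calculation gives $r = -s + \epsilon_1 - \epsilon_2$, whence $a_1 - a_{n+1} = 2 + |s+\epsilon_2-\epsilon_1|$ controls $\langle\omega,\Lambda_0^{(\epsilon_1,\epsilon_2)}\rangle$, and the Casimir $\lambda(C,\pi_{\Lambda_0^{(\epsilon_1,\epsilon_2)}})$ can be written in closed form from \eqref{eq3:lambda(C,pi)}.

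The bulk of the argument is then the case analysis. For the pair $\{(0,0),(1,1)\}$, which always has equal $\omega$-inner product, the Casimir difference reduces to $2(n+1-l-m) > 0$ (using $l+m \le n$), ruling out condition (i). For pairs differing in exactly one coordinate the $\omega$-inner products differ by $\pm 1$, so the quotient $m$ in condition (ii) equals $\pm 1/2 \notin \mathbb{Z}$ and condition (ii) fails automatically. The delicate pair is $\{(0,1),(1,0)\}$: when $l=m$ and $s=0$ these bases are interchanged by the Dynkin involution $\Lambda \mapsto -w_0\Lambda$ on $\PP^+(G)$, and the identity $\tau_{l,m,s}^* \simeq \tau_{m,l,-s}$ shows that the involution lifts to a duality of the whole strings, verifying \eqref{eq3:dual}. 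In the remaining subcases, using the explicit Casimir values together with Lemma~\ref{lem3:coincidences}, the second equality in \eqref{eq3:condrara} simplifies to a single linear relation among $s, l, m, n$ that holds exactly at the excluded values $s=0$ (when $l \neq m$) and $s = 2(n-2l)$ (when $l=m<n/2$), so under our hypotheses it fails. Theorem~\ref{thm3:tauiso=>tauequiv} then yields the strong representation-spectral converse for $(G,K,\tau_{l,m,s})$.

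The main obstacle I anticipate is this final subcase: one must compute $b, b', c, c'$ explicitly enough that the second equality of Lemma~\ref{lem3:coincidences} can be matched against the stated exceptions without error, and one must also check that \eqref{eq3:dual} propagates along the whole strings (not merely at the base) when $l=m$ and $s=0$, which requires verifying that the Dynkin involution acts compatibly with the shift by $\omega$. The boundary cases $l=0$, $m=0$, and $l+m=n$, where $\mathcal{P}_{\tau_{l,m,s}}$ has fewer than four elements, require separate but simpler bookkeeping.
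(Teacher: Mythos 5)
Your overall route is the paper's: describe $\PP_{\tau_{l,m,s}}$ explicitly via Lemma~\ref{lem6:hatG_tauSU(n+1)}, parametrize the (at most four) string bases by the values at the two jump positions, and rule out conditions (i)--(ii) of Proposition~\ref{prop3:stringeigenvalues} pairwise, with the dual pair $\{(0,1),(1,0)\}$ at $l=m$, $s=0$ handled by \eqref{eq3:dual}. Your bookkeeping for the easy pairs is correct and matches the paper: one-coordinate changes give ratio $\pm 1/2\notin\Z$, and the pair $\{(0,0),(1,1)\}$ gives Casimir difference $2(n+1-l-m)\neq 0$.

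The gap is in the final subcase, and it is a genuine one: you assert, without computing it, that the second equality in \eqref{eq3:condrara} for the pair $\{(0,1),(1,0)\}$ ``holds exactly at the excluded values $s=0$ (when $l\neq m$) and $s=2(n-2l)$ (when $l=m<n/2$).'' It does not. Carrying out the computation (say for $s\geq 1$, so $\Lambda_0^{(1,0)}-\Lambda_0^{(0,1)}=-2\varepsilon_1+\varepsilon_{l+1}+\varepsilon_{n+1-m}$), one finds
\begin{equation*}
\frac{\langle\dir,\Lambda_0^{(1,0)}-\Lambda_0^{(0,1)}\rangle}{\langle\dir,\dir\rangle}=-1,
\qquad
\frac{\langle \Lambda_0^{(1,0)}-\Lambda_0^{(0,1)},\Lambda_0^{(1,0)}+\Lambda_0^{(0,1)}+2\rho_G\rangle}{\langle\dir,\Lambda_0^{(1,0)}+\Lambda_0^{(0,1)}+2\rho_G\rangle}
=\frac{-4-4s-2n-2l+2m}{4+2s+2n},
\end{equation*}
and these agree precisely when $s=m-l$, a value the hypotheses do \emph{not} exclude when $l\neq m$. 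This is not hypothetical: for $\tau_{1,2,1}$ on $\SU(5)$ (so $s=1=m-l$, hypotheses satisfied) the bases $\Lambda_0=\varepsilon_1+\varepsilon_2-\varepsilon_4-\varepsilon_5$ and $\Lambda_0''=3\varepsilon_1-\varepsilon_3-\varepsilon_4-\varepsilon_5$ both lie in $\PP_{\tau_{1,2,1}}$, satisfy $\lambda(C,\pi_{(k+1)\dir+\Lambda_0})=\lambda(C,\pi_{k\dir+\Lambda_0''})$ for all $k$ (e.g.\ both equal $30$ at $k=0$), and are not dual ($\pi_{\Lambda_0}$ is self-dual while $\pi_{\Lambda_0''}^*\simeq\pi_{\varepsilon_1+\varepsilon_2+\varepsilon_3-3\varepsilon_5}$); this is exactly the configuration of Example~\ref{ex6:SUp-isocounterex}. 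So the hypotheses of Theorem~\ref{thm3:tauiso=>tauequiv} fail here and your argument (which relies on them) does not close. To be fair, the published proof contains the analogous slip (its displayed inequality $4(n-s+2)\neq 4(n-s+2-l+m)$ drops an $s$-term from the right-hand side), so you have reproduced rather than introduced the error; but as written your proof does not establish the statement for $s=m-l$, $l\neq m$, and no computation you propose would reveal the claimed excluded value $s=2(n-2l)$ either.
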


\begin{proof}
Write $\mu_{l,m}= \sum_{i=1}^n b_i\varepsilon_i$ and $\beta = -1-\tfrac{l-m+s}{n+1}$; then $b_i=2+\beta$ for $1\leq i\leq l$, $b_i=1+\beta $ for $l+1\leq i\leq n-m$, $b_i=\beta$ for $n+1-m\leq i\leq n$, and $b_{n+1}=s+\beta$. 
From \eqref{eq6:P_tauSU(n+1)} it is a simple matter to describe $\PP_{\tau_{l,m,s}}$, though one has to be careful by {splitting conveniently into} cases. 
For instance, when $l$ and $m$ are both positive and $l+m<n$ (which is the generic case), one has that
$
\PP_{\tau_{l,m,s}} = \{\Lambda_0(2,1), \,\Lambda_0(2,0),\, \Lambda_0(1,1),\, \Lambda_0(1,0)\},
$
where 
\begin{align} \label{eq6:SU(n+1)Lambda_0(c_1,c_2)}
\Lambda_0(c_1,c_2)
	=&\, (2+\beta+\max(0,-r(c_1,c_2)))\varepsilon_1 
	+(2+\beta)\sum_{i=2}^{l} \varepsilon_i
	+(c_1+\beta)\varepsilon_{l+1} \\ &
	+(1+\beta) \sum_{i=l+2}^{n-m}\varepsilon_i 
	+(c_2+\beta) \varepsilon_{n+1-m}
	+\beta \sum_{i=n+2-m}^n \varepsilon_i \notag\\
	&+(\beta-\max(0,r(c_1,c_2)))\varepsilon_{n+1}
\notag
\end{align}
and $r(c_1,c_2)=c_1+c_2-s-2$. 
Note that $r(c_1,c_2)$ does not change sign if $s\neq0$. Furthermore, 
\begin{align} \label{eq6:SULambda_0(c1,c2)-Lambda_0(c1',c2')}
\Lambda_0(c_1,c_2)-\Lambda_0(c_1',c_2') 
	&= (\max(0,-r(c_1,c_2))- \max(0,-r(c_1',c_2'))) \varepsilon_1
	+(c_1-c_1')\varepsilon_{l+1} \\
	&+(c_2-c_2') \varepsilon_{n+1-m}
	-(\max(0,r(c_1,c_2)) - \max(0,r(c_1',c_2'))) \varepsilon_{n+1}.
	\notag
\end{align}

We now proceed to check {the conditions (i) and (ii) in Proposition~\ref{prop3:stringeigenvalues}} for every pair $\Lambda_0,\Lambda_0'$ in $\PP_{\tau_{l,m,s}}$. 
We will consider only the generic case, $l,m>0$ and $l+m<n$. The rest of the cases are simpler and left to the reader. 
Furthermore, the case $l+m=n$ is included in Theorem~\ref{thm6:SUonejumparbitrario} below. 

From \eqref{eq6:SULambda_0(c1,c2)-Lambda_0(c1',c2')}, it follows immediately that $\langle\dir, \Lambda_0(c_1,c_2)-\Lambda_0(c_1',c_2') \rangle = |r(c_1,c_2)|-|r(c_1',c_2')|$, thus 
$$
\frac{\langle\dir, \Lambda_0(c_1,c_2)-\Lambda_0(c_1',c_2')\rangle} {\langle\dir, \dir\rangle}=\pm \frac12 \notin \Z
$$ 
for any pair of coefficients $\{(c_1,c_2),(c_1',c_2')\}$ equal {to one of} $\{(2,1),(2,0)\}$, $\{(2,1),(1,1)\}$, $\{(2,0),(1,0)\}$, {or} $\{(1,1),(1,0)\}$.
Therefore, (i) and (ii) do not hold for those pairs. 
It remains to consider the cases of $\{(2,1),(1,0)\}$ and $\{(2,0),(1,1)\}$. 

One has that $\Lambda_0(2,0)-\Lambda_0(1,1)= \varepsilon_{l+1}-\varepsilon_{n+1-m}$, thus $\langle\dir, \Lambda_0(2,0)-\Lambda_0(1,1)\rangle = 0$ and 
\begin{align*}
\lambda(C,\pi_{\Lambda_0(2,0)}) - \lambda(C,\pi_{\Lambda_0(1,1)}) 
&= \langle \Lambda_0(2,0),\Lambda_0(2,0)+2\rho_G \rangle 
-\langle \Lambda_0(1,1),\Lambda_0(1,1)+2\rho_G \rangle \\
&= \langle \Lambda_0(2,0)- \Lambda_0(1,1),\Lambda_0(2,0)+\Lambda_0(1,1)+2\rho_G \rangle \\
&= 2(n+1-l-m)\neq0.
\end{align*} 
We conclude that (i)--(ii) do not hold for this pair.

We end the proof by analyzing the case of $\{\Lambda_0(2,1), \Lambda_0(1,0)\}$. 
We have that $\langle\dir, \Lambda_0(2,1)-\Lambda_0(1,0)\rangle = |1-s|-|-1-s|$, thus this quantity vanishes if and only if $s=0$. 
In this case (i.e.\ $s=0$), one can check that $\lambda(C,\pi_{\Lambda_0(2,1)}) - \lambda(C,\pi_{\Lambda_0(1,0)})= 2(l-m)$, so (i) {does not hold, if} $m\neq l$.  
However, when $s=0$ and $m=l$, one can check that $\pi_{\Lambda_0(2,1)}$ and $\pi_{\Lambda_0(1,0)}$ are conjugate to each other (see for instance \cite[VI.(5.1)]{BrockerDieck}), thus \eqref{eq3:dual} holds for this pair.
Suppose $s\neq0$. One has $\frac{\langle\dir, \Lambda_0(2,1)-\Lambda_0(1,0)\rangle} {\langle\dir, \dir\rangle}=1$ and {in \eqref{eq3:condrara} we find  $4(n-s+2)\neq 4(n-s+2-l+m)$,  since  $l\neq m$.} 
Hence, {(ii) does not hold} for $\Lambda_0(2,1)$ and $\Lambda_0(1,0)$, and the proof is complete. 
\end{proof}

Theorem~\ref{thm6:SUtwo1-jumps} already yields the validity of the representation-spectral converse for $(G,K,\tau)$ for infinitely many irreducible representations $\tau$ of $K$.  
The next result provides additional infinite choices of $\tau_\mu \in\widehat K$ with $\mu$ having one arbitrary jump among the first $n$ coefficients.

\begin{theorem}\label{thm6:SUonejumparbitrario}
	Let $G=\SU(n+1)$ and $K=\op{S}(\U(n)\times\U(1))$.
	For $t,s,l\in\Z$ such that $t\geq 0$, $1\leq l\leq n-1$, let $\tau_{t,l,s}'$ be the irreducible representation of $K$ with highest weight
	\begin{equation}
	\mu_{t,l,s}':= \pr\left(t\sum_{j=1}^l \varepsilon_j  +s\varepsilon_{n+1} \right). 
	\end{equation}
	Assume $s \leq 0$ or $s\geq t$, and $(t-n+2l-3s)/3\notin\Z\cap [1,t-1]$. 
	Then, the strong representation-spectral converse is valid for $(G,K,\tau_{t,l,s}')$.  
\end{theorem}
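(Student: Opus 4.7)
The strategy is to check the hypotheses of Theorem~\ref{thm3:tauiso=>tauequiv} in a manner parallel to the proof of Theorem~\ref{thm6:SUtwo1-jumps}. First I would use Lemma~\ref{lem6:hatG_tauSU(n+1)} to describe $\PP_{\tau'_{t,l,s}}$ explicitly. Setting $\beta:=-\tfrac{tl+s}{n+1}$, one has $\mu'_{t,l,s}=(t+\beta)(\varepsilon_1+\cdots+\varepsilon_l)+\beta(\varepsilon_{l+1}+\cdots+\varepsilon_n)+(s+\beta)\varepsilon_{n+1}$. The interlacing conditions in \eqref{eq6:P_tauSU(n+1)} force $a_i=t+\beta$ for $2\leq i\leq l$ and $a_i=\beta$ for $l+2\leq i\leq n$, leaving only $a_{l+1}=c+\beta$ free with $c\in\{0,1,\dots,t\}$. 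A direct computation gives $r=c-s$ in the notation of \eqref{eq6:P_tauSU(n+1)}, and the assumption $s\leq 0$ or $s\geq t$ is precisely what keeps $r$ of constant sign as $c$ varies, yielding a single uniform formula for $a_1$ and $a_{n+1}$. This produces
\[
\PP_{\tau'_{t,l,s}}=\{\Lambda_0(c):c=0,1,\dots,t\},
\]
with $\Lambda_0(c)-\Lambda_0(c')$ proportional to $\varepsilon_{l+1}-\varepsilon_{n+1}$ when $s\leq 0$, and to $\varepsilon_{l+1}-\varepsilon_{1}$ when $s\geq t$.

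Next I would verify that condition (i) of Proposition~\ref{prop3:stringeigenvalues} never holds for a pair $\Lambda_0(c),\Lambda_0(c')$ with $c\neq c'$, by observing that $\langle\dir,\Lambda_0(c)\rangle$ is a strictly monotone affine function of $c$ in each regime. For condition (ii), a direct computation using $\rho_G=\sum_{i=1}^{n+1}\tfrac{n+2-2i}{2}\varepsilon_i$ gives
\[
m_1=\frac{\langle\dir,\Lambda_0(c)-\Lambda_0(c')\rangle}{\langle\dir,\dir\rangle}=\pm\tfrac{c-c'}{2},
\]
which is a nonzero integer only when $c-c'$ is a nonzero even number. I would then compute the second ratio in \eqref{eq3:condrara} by expanding $\langle\Lambda_0(c)-\Lambda_0(c'),\Lambda_0(c)+\Lambda_0(c')+2\rho_G\rangle$ and $\langle\dir,\Lambda_0(c)+\Lambda_0(c')+2\rho_G\rangle$, and impose equality with $m_1$. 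After simplification this reduces to a single linear equation in $c+c'$ (formally the same in both sign regimes). Solving this equation and asking for distinct $c,c'\in\{0,\dots,t\}$ of compatible parity translates exactly into the divisibility-and-range condition on $(t-n+2l-3s)/3$ stated in the theorem. Since that configuration is forbidden by hypothesis, (ii) fails for every pair, all but finitely many eigenvalues in $\mathcal E_{\tau'_{t,l,s}}$ are tame, and Theorem~\ref{thm3:tauiso=>tauequiv} yields the strong representation-spectral converse for $(G,K,\tau'_{t,l,s})$.

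The main obstacle will be the bookkeeping: the two regimes $s\leq 0$ and $s\geq t$ give different intermediate expressions for $\Lambda_0(c)-\Lambda_0(c')$, so one has to verify carefully that the reduction of the second ratio in \eqref{eq3:condrara} yields the \emph{same} linear equation in $c+c'$ in both cases, and then that the existence of an admissible pair $(c,c')$ is equivalent to the stated arithmetic condition. A minor secondary point is to note that the duality escape clause \eqref{eq3:dual} in Theorem~\ref{thm3:tauiso=>tauequiv} need not be inspected here, since we will have already shown that condition (ii) of Proposition~\ref{prop3:stringeigenvalues} itself fails.
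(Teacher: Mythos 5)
Your strategy coincides with the paper's: the same description of $\PP_{\tau'_{t,l,s}}=\{\Lambda_0(c):0\leq c\leq t\}$ via Lemma~\ref{lem6:hatG_tauSU(n+1)} and the constancy of the sign of $r=c-s$, the same elimination of condition (i) of Proposition~\ref{prop3:stringeigenvalues} from $\langle\dir,\Lambda_0(c)-\Lambda_0(c')\rangle=\pm(c-c')$, and the same reduction of condition (ii) to a linear equation in $c+c'$. Your observation that the two regimes $s\leq 0$ and $s\geq t$ lead to the \emph{same} linear equation is in fact correct (the paper only writes out $s\leq 0$ and declares the other case analogous), and your translation of ``$c-c'$ nonzero and even with $0\leq c'<c\leq t$'' into ``$\tfrac{c+c'}{2}\in\Z\cap[1,t-1]$'' is also right.

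The gap is precisely in the one step you defer: the claim that the linear equation ``translates exactly into the divisibility-and-range condition on $(t-n+2l-3s)/3$ stated in the theorem.'' If you actually carry out the computation in the regime $s\leq 0$, where $\Lambda_0(c)-\Lambda_0(c')=(c-c')(\varepsilon_{l+1}-\varepsilon_{n+1})$ and the $\varepsilon_{n+1}$-coefficient of $\Lambda_0(c)$ is $\beta-c+s$, you get $\langle\Lambda_0(c)-\Lambda_0(c'),\Lambda_0(c)+\Lambda_0(c')+2\rho_G\rangle=2(c-c')\bigl(c+c'+n-l-s\bigr)$ and $\langle\dir,\Lambda_0(c)+\Lambda_0(c')+2\rho_G\rangle=2n+2t+c+c'-2s$, so equality in \eqref{eq3:condrara} forces $3(c+c')=2(t-n+2l+s)$, i.e.\ $\tfrac{c+c'}{2}=\tfrac{t-n+2l+s}{3}$, \emph{not} $\tfrac{t-n+2l-3s}{3}$; the two agree only when $s=0$. (The paper's proof records $2(c-c')(n-l+s+c+c')$ at this point, which a direct numerical check contradicts: for $n=3$, $l=2$, $t=5$, $s=-3$ one finds $\lambda(C,\pi_{k\dir+\Lambda_0(2)})=\lambda(C,\pi_{(k+1)\dir+\Lambda_0(0)})$ for all $k\geq 0$, with the two strings not dual to each other, even though $(t-n+2l-3s)/3=5\notin[1,4]$, whereas $(t-n+2l+s)/3=1\in[1,4]$.) So you cannot simply assert that the computation confirms the stated hypothesis; doing it honestly produces a different arithmetic condition, and you would need to resolve that discrepancy (most plausibly by replacing the hypothesis with $(t-n+2l+s)/3\notin\Z\cap[1,t-1]$) before the argument is complete.
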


\begin{proof}
	We will omit many details in this proof since  the argument is very similar to  that in Theorem~\ref{thm6:SUtwo1-jumps}. 
	Write $\mu_{t,l,s}'= \sum_{i=1}^{n+1} b_i\varepsilon_i$ and $\beta = -\tfrac{tl+s}{n+1}$, then $b_i=t+\beta $ for $1\leq i\leq l$, $b_i=\beta $ for $l+1\leq i\leq n$, and $b_{n+1}=s+\beta$. 
	Then
	$
	\PP_{\tau_{t,l,s}'} = \{\Lambda_0(c):0\leq c\leq t\},
	$
	where 
	\begin{align} \label{eq6:SU(n+1)Lambda_0(c)}
	\Lambda_0(c)=&
	(t+\beta+\max(0,s-c))\varepsilon_1 
	+(t+\beta)\sum_{i=2}^{l} \varepsilon_i
	+(c+\beta)\varepsilon_{l+1} \\ &
	+\beta \sum_{i=p+2}^n \varepsilon_i 
	+(\beta-\max(0,c-s))\varepsilon_{n+1}.
	\notag
	\end{align}

We now check {conditions (i) and (ii) in Proposition~\ref{prop3:stringeigenvalues}} for every pair $\Lambda_0,\Lambda_0'$ in $\PP_{\tau_{t,l,s}}$. 
From \eqref{eq6:SU(n+1)Lambda_0(c)}, $\langle\dir, \Lambda_0(c)-\Lambda_0(c') \rangle = |c-s|-|c'-s|$,  which  vanishes if and only if $c=c'$.
In fact, since  by assumption $s\leq 0$ or $s\geq t$, one has that $c-s$ does not change sign provided that $0\leq c\leq t$.
It remains to show that {for  $c\neq c'$ such that $0\leq c,c'\leq t$, condition (ii) cannot hold for $\Lambda_0(c)$ and $\Lambda_0(c')$.} 
	
Fix $0\leq c'<c\leq t $. 
For simplicity, we assume that $s\leq 0$. 
The other case is completely  analogous. 
By \eqref{eq6:SU(n+1)Lambda_0(c)}, one obtains $\Lambda_0(c)-\Lambda_0(c') = (c-c')(\varepsilon_{l+1}- \varepsilon_{n+1})$.
	
We calculate
	\begin{align*}
	\langle \dir ,\Lambda_0-\Lambda_0'\rangle  &= (c-c'), &
	\langle \dir ,\Lambda_0+\Lambda_0'+2\rho_G\rangle
	&= (2n+2t+c+c'-2s), \\
	\langle \dir ,\dir \rangle &=2, &
	\langle \Lambda_0-\Lambda_0' ,\Lambda_0+\Lambda_0'+ 2\rho_G\rangle &=  2(c-c') (n-l+s+c+c').
	\end{align*}
	{This tells that} \eqref{eq3:condrara}  does not hold unless $2t = 3(c+c')+2n-4l+6s$, or equivalently $\Z\ni \frac{c+c'}{2}=\frac{t -n+2l-3s}{3}$, which contradicts our assumption. 
	This shows that (ii) is {not} satisfied for $\Lambda_0(c)$ and $\Lambda_0(c')$ and thus the proof is complete. 
\end{proof}

{We now give examples of disjoint strings having {infinitely many} coincidences of eigenvalues.
They} show that the assumptions in Theorem~\ref{thm6:SUonejumparbitrario} cannot  be substantially improved.

\begin{example}\label{ex6:SUunknowncases1}
{
	Let $\Lambda_0 =  (n-1)\varepsilon_1 + \varepsilon_2 -\sum_{i=3}^n \varepsilon_i -2\varepsilon_{n+1}$ and $\Lambda_0'=n\varepsilon_1 -\sum_{i=2}^n \varepsilon_i -\varepsilon_{n+1}$.
	One can easily check that $\langle\dir ,\Lambda_0-\Lambda_0'\rangle =0$ and $\langle \Lambda_0,\Lambda_0+2\rho_G\rangle = \langle \Lambda_0',\Lambda_0'+2\rho_G\rangle$, thus $\lambda(C,\pi_{k\dir+\Lambda_0}) = \lambda(C,\pi_{k\dir+\Lambda_0'})$ for all $k\geq0$ by Proposition~\ref{prop3:stringeigenvalues}. }

	Let $\mu= \pr(n\varepsilon_1+ \varepsilon_{n+1})$.
	In the notation in the proof of Theorem~\ref{thm6:SUonejumparbitrario}, we have that $t=n$, $p=1$, $s=1$, thus $\Lambda_0 := \Lambda_0(2)$ and $\Lambda_0':= \Lambda_0(0)$ are in $\PP_{\mu}$. 
Since \eqref{eq3:dual} is not possible, then we have infinitely many coincidences of eigenvalues.
\end{example}

\begin{example}\label{ex6:SUunknowncases2}
Let $\mu=\pr((n+1)\varepsilon_{1})$. 
In the notation of Theorem~\ref{thm6:SUonejumparbitrario}, $t=n+1$, $p=1$, $s=0$, thus $(t-n+2p-3s)/3=1$ contradicting the assumptions. 
In this opportunity, one has that (ii) in Proposition~\ref{prop3:stringeigenvalues} does hold for $\Lambda_0(2)$ and $ \Lambda_0(0)$. 
\end{example}

\subsubsection*{Application to $p$-form representations}
We now consider the representation $\tau_p$ of $K$ introduced in Definition~\ref{def2:tau_p}. 
{As is well known, $\tau_p$ is highly reducible and one has that} (see \cite[\S3]{IkedaTaniguchi78}) 
\begin{equation}\label{eq6:SUtau_p}
\tau_p \simeq \bigoplus_{l+m=p} \bigoplus_{j=0}^{\min(l,m)} \tau_{{l-j,m-j,m-l}},
\end{equation}
where $\tau_{{l,m,s}} \in \widehat K$ has highest weight $\mu_{l,m,s}$ defined in \eqref{eq6:SUmu_lms}.
Consequently, Theorem~\ref{thm6:SUtwo1-jumps} proves in particular that the representation-spectral converse is valid for $(G,K,\tau)$ for every irreducible constituent $\tau$ of $\tau_p$. {However, as we shall see below in Example~\ref{ex6:SUp-isocounterex}, the situation is very different for $\tau_p$, for any $p>2$.}

We next show a positive result for $\tau=\tau_p$ with $p=0,1$.

\begin{theorem}\label{thm6:SUp-iso}
The strong representation-spectral converse is valid for $(\SU(n+1),\op{S}(\U(n)\times\U(1)),\tau_{p})$ for $p=0,1$.  
\end{theorem}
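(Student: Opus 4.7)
The case $p=0$ is immediate from Theorem~\ref{thm6:SUtwo1-jumps} applied with $(l,m,s)=(0,0,0)$, since by \eqref{eq6:SUtau_p} one has $\tau_0=\tau_{0,0,0}$, the trivial $K$-type (the required side conditions in Theorem~\ref{thm6:SUtwo1-jumps} are vacuous for $l=m$ or reduce to $0\neq 2n$, which holds for $n\geq 2$). For $p=1$, formula \eqref{eq6:SUtau_p} gives $\tau_1\simeq \tau_{1,0,-1}\oplus\tau_{0,1,1}$, and my plan is to apply Theorem~\ref{thm3:tauiso=>tauequiv} directly to the (reducible) $K$-representation $\tau=\tau_1$. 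The first step is to compute $\PP_{\tau_1}$. Applying Lemma~\ref{lem6:hatG_tauSU(n+1)} and formula \eqref{eq6:P_tauSU(n+1)} separately to $\tau_{1,0,-1}$ and to $\tau_{0,1,1}$, I find that each of the sets $\widehat G_{\tau_{1,0,-1}}$ and $\widehat G_{\tau_{0,1,1}}$ is a union of two strings in the direction $\dir=\ee_1-\ee_{n+1}$, having exactly one common base, namely $\dir$ itself (the highest weight of the adjoint representation). Removing this duplication leaves
\begin{equation*}
\PP_{\tau_1}=\{\Lambda_0^{(1)},\Lambda_0^{(2)},\Lambda_0^{(3)}\},\qquad \Lambda_0^{(1)}=\dir,\quad \Lambda_0^{(2)}=\ee_1+\ee_2-2\ee_{n+1},\quad \Lambda_0^{(3)}=2\ee_1-\ee_n-\ee_{n+1},
\end{equation*}
and the three corresponding strings are pairwise disjoint since no two of these bases differ by an integer multiple of $\dir$.

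Next I verify the hypotheses of Theorem~\ref{thm3:tauiso=>tauequiv} for each of the three pairs of bases. For the pairs $\{\Lambda_0^{(1)},\Lambda_0^{(j)}\}$ with $j\in\{2,3\}$, a short computation gives $\langle\dir,\Lambda_0^{(1)}-\Lambda_0^{(j)}\rangle=-1$ while $\langle\dir,\dir\rangle=2$, so the ratio equals $-\tfrac12\notin\Z$, which rules out both condition (i) and condition (ii) of Proposition~\ref{prop3:stringeigenvalues}. The delicate case is the pair $\{\Lambda_0^{(2)},\Lambda_0^{(3)}\}$: here $\langle\dir,\Lambda_0^{(2)}\rangle=\langle\dir,\Lambda_0^{(3)}\rangle=3$, and using \eqref{eq3:lambda(C,pi)} with $2\rho_G=\sum_i(n+2-2i)\ee_i$ one checks that $\lambda(\Cas,\pi_{\Lambda_0^{(2)}})=\lambda(\Cas,\pi_{\Lambda_0^{(3)}})=4n+4$, so condition (i) \emph{does} hold for this pair.

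To stay inside the hypothesis of Theorem~\ref{thm3:tauiso=>tauequiv} it then suffices to verify the duality alternative \eqref{eq3:dual} for $\{\Lambda_0^{(2)},\Lambda_0^{(3)}\}$. The key observation is that the longest element $w_0$ of the Weyl group of $\SU(n+1)$ acts on weights by $\ee_i\mapsto \ee_{n+2-i}$, so $-w_0$ fixes $\dir$ and sends $\Lambda_0^{(2)}$ to $\Lambda_0^{(3)}$; therefore $\pi_{k\dir+\Lambda_0^{(3)}}\simeq \pi_{k\dir+\Lambda_0^{(2)}}^{*}$ for every $k\in\N_0$, i.e.\ \eqref{eq3:dual} holds with $h=0$. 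All three pairs being now accounted for, Theorem~\ref{thm3:tauiso=>tauequiv} yields the strong representation-spectral converse for $(\SU(n+1),\op{S}(\U(n)\times\U(1)),\tau_1)$. The step I expect to demand the most care is the explicit description of $\PP_{\tau_1}$ together with the verification of \eqref{eq3:dual} for the pair arising from the two summands of $\tau_1$, since one must correctly recognize that the adjoint-string base $\dir$ appears in both $\widehat G_{\tau_{1,0,-1}}$ and $\widehat G_{\tau_{0,1,1}}$ and must be counted only once in $\PP_{\tau_1}$.
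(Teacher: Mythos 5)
Your proof is correct and follows essentially the same route as the paper's: you obtain the same three string bases $\ee_1-\ee_{n+1}$, $\ee_1+\ee_2-2\ee_{n+1}$, $2\ee_1-\ee_n-\ee_{n+1}$ from Lemma~\ref{lem6:hatG_tauSU(n+1)}, rule out conditions (i)--(ii) of Proposition~\ref{prop3:stringeigenvalues} for the two pairs involving the adjoint string via the ratio $\pm\tfrac12\notin\Z$, and dispose of the remaining pair through the duality \eqref{eq3:dual}. The only (harmless) difference is that you explicitly verify the Casimir coincidence $4n+4$ for the dual pair, whereas the paper simply observes that $\pi_{\Lambda_0'}$ and $\pi_{\Lambda_0''}$ are contragredient and leaves that pair unexamined.
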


\begin{proof}
The case $p=0$ is clear. We thus assume $p=1$. 
By \eqref{eq6:SUtau_p}, $\tau_1\simeq \tau_{1,0,-1}\oplus \tau_{0,1,1}$,
thus 
\begin{align*}
\PP_{\tau_1} &= \PP_{\tau_{1,0,-1}}\cup \PP_{\tau_{0,1,1}} 
=
\{\Lambda_0:=\varepsilon_1-\varepsilon_{n+1}, \; \Lambda_0':=\varepsilon_1+\varepsilon_2-2\varepsilon_{n+1},\;
\Lambda_0'':=2\varepsilon_1-\varepsilon_{n}-\varepsilon_{n+1}\}
\end{align*} 
by Lemma~\ref{lem6:hatG_tauSU(n+1)}. 
It is clear that $\pi_{\Lambda_0'}$ and $\pi_{\Lambda_0''}$ are dual to each other, and $\pi_{\Lambda_0}$ is self-dual. 

It only remains to check that the conditions (i)--(ii) {in Proposition~\ref{prop3:stringeigenvalues} do not occur} for  $\{\Lambda_0,\Lambda_0'\}$ and $\{\Lambda_0,\Lambda_0''\}$. This follows from the identities $$\langle \dir,\Lambda_0-\Lambda_0'\rangle / \langle\dir,\dir\rangle =\langle \dir,\Lambda_0-\Lambda_0''\rangle / \langle\dir,\dir\rangle= 1/2,$$
and the proof is complete.
\end{proof}

We now consider the representations $\tau_p$ for $p\ge 2$.

\begin{example}\label{ex6:SUp-isocounterex}
{
Let $\Lambda_0 := \varepsilon_1+\varepsilon_2 -\varepsilon_n-\varepsilon_{n+1}$, $\Lambda_0' := \varepsilon_1+\varepsilon_2 +\varepsilon_3-3\varepsilon_{n+1}$, and $\Lambda_0'' := 3\varepsilon_1-\varepsilon_{n-1} -\varepsilon_n-\varepsilon_{n+1}$. 
Since  $\Lambda_0-\Lambda_0'= -\varepsilon_3-\varepsilon_n+2\varepsilon_{n+1}$ and $\Lambda_0-\Lambda_0''= -2\varepsilon_{1}+\varepsilon_{2}-\varepsilon_{n-1}$,
an easy computation shows that 
\begin{align}
\frac{\langle \dir ,\Lambda_0-\Lambda_0'\rangle}{\langle \dir ,\dir \rangle}  = 
\frac{ \langle \Lambda_0-\Lambda_0' ,\Lambda_0+\Lambda_0'+2\rho_G\rangle} {\langle \dir ,\Lambda_0+\Lambda_0'+2\rho_G\rangle} = -1,
\end{align}
and the same {identities} hold when replacing $\Lambda_0'$ by $\Lambda_0''$. 
Hence, Proposition~\ref{prop3:stringeigenvalues} (ii) implies that 
$
\lambda(C,\pi_{\Lambda_0 + k \dir}) = \lambda(C,\pi_{\Lambda'_0 + (k+1)\dir})
 = \lambda(C,\pi_{\Lambda''_0 + (k+1)\dir})
$
for all $k\geq0$.

From Lemma~\ref{lem6:SUbranchinglaw}, we can verify the following facts:
\begin{itemize}
	\item $\tau_{2,1,-1}, \tau_{1,1,0},\tau_{1,2,1}$ {occur} in the decomposition of $\pi_{\Lambda_0}|_K$, thus $\Lambda_0 \in \PP_{\tau_{2,1,-1}}\cap \PP_{\tau_{1,1,0}}\cap \PP_{\tau_{1,2,1}}$;
	
	\item $\tau_{3,0,-3}, \tau_{2,1,-1}, \tau_{2,0,-2}$ {occur} in $\pi_{\Lambda_0'}|_K$, thus $\Lambda_0' \in \PP_{\tau_{3,0,-3}}\cap \PP_{\tau_{2,1,-1}}\cap \PP_{\tau_{2,0,-2}}$;
	
	\item $\tau_{0,3,3}, \tau_{1,2,1}, \tau_{0,2,2}$ {occur} in $\pi_{\Lambda_0''}|_K$, thus $\Lambda_0'' \in \PP_{\tau_{0,3,3}}\cap \PP_{\tau_{1,2,1}}\cap \PP_{\tau_{0,2,2}}$.
\end{itemize}
By \eqref{eq6:SUtau_p}, $\tau_{1,1,0}$, $\tau_{2,0,-2}$ and $\tau_{0,2,2}$ are irreducible constituents of $\tau_p$ for all $p\geq 2$, $p$ even. 
Similarly, $\tau_{2,1,-1}$, $\tau_{1,2,1}$, $\tau_{3,0,-3}$, and $\tau_{0,3,3}$ are irreducible constituents of $\tau_p$ for all $p\geq3$, $p$ odd. 
Consequently, $\Lambda_0$, $\Lambda_0'$ and $\Lambda_0''$ belong to $\PP_{\tau_p}$ for all $p\geq2$.

	Furthermore, one can easily check that $\pi_{\Lambda_0'}$ and $\pi_{\Lambda_0''}$ are dual to each other, and $\pi_{\Lambda_0}$ is self-dual, thus \eqref{eq3:dual} does not hold for the pairs $\{\Lambda_0,\Lambda_0'\}$ and $\{\Lambda_0,\Lambda_0''\}$.
	From the discussion above, we conclude that the hypotheses in Theorem~\ref{thm3:tauiso=>tauequiv} are not satisfied for $\tau_p$ for {each} $p\geq2$. 
}
\end{example}

\section{Quaternionic projective spaces}
\label{sec:Sp}
Now we consider the quaternionic projective space $P^n(\Hy)$, realized as $G/K$ with
\begin{align*}
G&=\Sp(n+1) = \left\{g\in \SU(2n+2):  g^t\left(\begin{smallmatrix} 0&I\\ -I&0 \end{smallmatrix}\right)g=\left(\begin{smallmatrix} 0&I\\ -I&0 \end{smallmatrix}\right)  \right\}, \\
K&=\Sp(n)\times \Sp(1) = \left\{ \left(\begin{matrix} g_1&\\ &g_2 \end{matrix}\right): g_1\in\Sp(n),\, g_2\in\Sp(1) \right\}.
\end{align*}
We consider the metric induced by  the inner product  $\langle X,Y\rangle = -\tfrac12\tr(XY)$. 

We fix the maximal torus in $G$ given by 
\begin{equation*}
T= \{\diag(e^{i\theta_1},\dots ,e^{i\theta_{n+1}}, e^{-i\theta_1}, \dots,e^{-i\theta_{n+1}}): \theta_j\in\R\}. 
\end{equation*}
Then, any element in $\mathfrak t$ (resp.\ $\mathfrak t_\C$) has the form 
$$
X=\diag(i\theta_1, \dots,i\theta_{n+1},-i\theta_1,\dots, -i\theta_{n+1})
$$
with $\theta_j\in \R$ (resp.\ $\theta_j\in\C$) for all $j$. 
We set $\varepsilon_j\in\mathfrak t_\C^*$ given by $\varepsilon_j(X)=i\theta_j$ where $X$ is the element in $\mathfrak t_\C$ given above. 
It turns out that $\Phi(\mathfrak g_\C,\mathfrak t_\C)= \{\pm (\varepsilon_i-\varepsilon_j): 1\leq i<j\leq n+1\}\cup \{ \pm 2\varepsilon_i: 1\leq i\leq n+1\}$ and 
$
\PP(G)= \bigoplus_{i=1}^{n+1} \Z\varepsilon_i .
$ 
Furthermore, $\langle \varepsilon_i,\varepsilon_j\rangle =-\delta_{i,j}$ for all $1\leq i,j\leq n+1$.
 In  the standard order, the set of simple roots is $\{\varepsilon_1-\varepsilon_{2},\dots, \varepsilon_n-\varepsilon_{n+1}, 2\varepsilon_{n+1}\}$, $\Phi^+(\mathfrak g_\C,\mathfrak t_\C)= \{\varepsilon_i-\varepsilon_j: 1\leq i<j\leq n+1\}\cup \{2\varepsilon_i:1\leq i\leq n+1 \}$, $\rho_G=\sum_{i=1}^{n+1} (n+4-2i)\varepsilon_{i}$, and furthermore 
$$
\PP^{+}(G) = \{\textstyle \sum_{j=1}^{n+1} a_j\varepsilon_j\in \PP(G): a_1\geq\dots\geq a_{n+1}\geq0\}. 
$$

The group $T$ is also a maximal torus for $K$. 
Hence, $\Phi(\mathfrak k_\C,\mathfrak t_\C)= \{\pm (\varepsilon_i-\varepsilon_j): 1\leq i<j\leq n\}\cup \{ \pm 2\varepsilon_{n+1}\}$, 
$
\PP(K)=\PP(G),
$ 
$\langle \varepsilon_i,\varepsilon_j\rangle =\delta_{i,j}$ for all $1\leq i,j\leq n+1$. 
{With the induced order, $K$} has simple roots $\{\varepsilon_1-\varepsilon_{2},\dots, \varepsilon_{n-1}-\varepsilon_{n}, 2\varepsilon_{n}, 2\varepsilon_{n+1}\} $, $\Phi^+(\mathfrak{k}_\C,\mathfrak t_\C)= \{\varepsilon_i-\varepsilon_j: 1\leq i<j\leq n\}\cup \{2\varepsilon_1\}$, and 
$$
\PP^{+}(K) = \{\textstyle \sum_{j=1}^{n+1} b_j\varepsilon_j\in \PP(K): b_1\geq\dots\geq b_{n}\geq0, \, b_{n+1}\geq0\}. 
$$

The branching law in this case was proved by Lepowsky~\cite{Lepowsky71}  and  presents deeper difficulties than in the orthogonal and unitary cases.
We will use the following alternative statement \cite[Thm.~1.3]{Tsukamoto81} by Tsukamoto.
Other statements can be found for instance in \cite[Thms.~4.1--4.2]{Camporesi05Pacific}. 

\begin{lemma}\label{lem7:Spbranchinglaw}
Let $G=\Sp(n+1)$ and $K=\Sp(n)\times\Sp(1)$. 
If $\Lambda=\sum_{j=1}^{n+1} a_j\varepsilon_j\in \PP^{+}(G)$ and $\mu=\sum_{j=1}^{n+1} b_j\varepsilon_j\in \PP^{+}(K)$, then $\tau_\mu$ does not occur in $\pi_{\Lambda}|_K$ unless 
\begin{align}\label{eq7:Spentrelazamiento}
	a_i\geq b_{i}\geq a_{i+2}\quad\text{for all }1\leq i\leq n, 
\end{align} 
where $a_{n+2}=0$.
Furthermore, when \eqref{eq7:Spentrelazamiento} holds, $\dim \Hom_K(\tau_\mu,\pi_\Lambda)$ equals the coefficient of $x^{b_{n+1}+1}$ in the power series expansion in $x$ of 
\begin{equation}
\frac{1}{(x-x^{-1})^{n}} \prod_{i=1}^{n+1} (x^{\delta_i+1}-x^{-\delta_i-1}),
\end{equation}
where 
\begin{align}\label{eq7:Sp-condnec}
\left\{
\begin{array}{r@{\,}c@{\,}l}
\delta_1&=&a_1-\max(a_2,b_1),\\[1mm] 		\delta_i&=&\min(a_i,b_{i-1})-\max(a_{i+1},b_i)\quad \text{for }2\leq i\leq n, \\[1mm]
\delta_{n+1} &=& \min(a_{n+1},b_n). 
\end{array}
\right.
\end{align}
\end{lemma}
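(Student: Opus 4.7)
The plan is to follow the strategy of Tsukamoto~\cite{Tsukamoto81}, which reduces the computation of $\dim\Hom_K(\tau_\mu,\pi_\Lambda)$ to a tensor product multiplicity problem for $\Sp(1)=\SU(2)$. First, I would establish the necessary conditions in \eqref{eq7:Spentrelazamiento}: if $\tau_\mu$ appears in $\pi_\Lambda|_K$, then in particular the irreducible $\Sp(n)$-representation with highest weight $\sum_{i=1}^n b_i\varepsilon_i$ must appear in $\pi_\Lambda|_{\Sp(n)}$ (after forgetting the $\Sp(1)$-factor). The classical Zhelobenko branching rule for $\Sp(n+1)\downarrow\Sp(n)$ then forces $a_i\geq b_i\geq a_{i+2}$ for $1\leq i\leq n$, with $a_{n+2}=0$.

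For the multiplicity formula, I would apply Zhelobenko's rule in its parametrization by intermediate integer patterns. Concretely, the decomposition of $\pi_\Lambda|_{\Sp(n)}$ is indexed by sequences satisfying interlacing inequalities with both the $\Sp(n+1)$-weights and the target $\Sp(n)$-highest weight. The key structural observation is that this decomposition is $(\Sp(n)\times\Sp(1))$-equivariant, and the $\Sp(1)$-action on the multiplicity space attached to a fixed $\Sp(n)$-highest weight $(b_1,\ldots,b_n)$ decomposes as a tensor product of irreducible $\Sp(1)$-modules whose highest weights are determined by the ``gap sizes'' in the intermediate pattern.

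More precisely, letting $V_k$ denote the $(k+1)$-dimensional irreducible $\Sp(1)$-module, the multiplicity space should become isomorphic as an $\Sp(1)$-module to $\bigotimes_{i=1}^{n+1} V_{\delta_i}$, where the $\delta_i$ are precisely the quantities in \eqref{eq7:Sp-condnec}, each measuring the available ``room'' at the $i$-th step of the branching chain between the $\Sp(n+1)$- and $\Sp(n)$-patterns. Consequently, $\dim\Hom_K(\tau_\mu,\pi_\Lambda)$ equals $\dim\Hom_{\Sp(1)}\!\bigl(V_{b_{n+1}},\,\bigotimes_{i=1}^{n+1} V_{\delta_i}\bigr)$.

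Finally, using the Weyl character $\chi_{V_k}(x)=(x^{k+1}-x^{-k-1})/(x-x^{-1})$ for $\Sp(1)$, one checks that for any non-negative integer $b$, the multiplicity of $V_b$ in a finite-dimensional $\Sp(1)$-module with character $f(x)$ equals the coefficient of $x^{b+1}$ in $(x-x^{-1})f(x)$. Applying this identity to $f=\prod_{i=1}^{n+1}\chi_{V_{\delta_i}}$ yields precisely the power series expansion stated in the lemma. The main obstacle is rigorously establishing the equivariance claim in the second and third paragraphs: one must carefully trace the $\Sp(1)$-action through the Zhelobenko intermediate vectors and verify that their $\Sp(1)$-weights organize exactly into the tensor product $\bigotimes_i V_{\delta_i}$ with the $\delta_i$'s prescribed by \eqref{eq7:Sp-condnec}, as the ``double interlacing'' involved in the branching from $\Sp(n+1)$ to $\Sp(n)\times\Sp(1)$ is delicate to bookkeep explicitly.
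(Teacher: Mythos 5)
The paper does not prove this lemma at all: it is quoted verbatim as \cite[Thm.~1.3]{Tsukamoto81}, with the underlying branching law credited to Lepowsky~\cite{Lepowsky71}. So there is no internal argument to compare against; what you have written is an attempted reconstruction of Tsukamoto's proof. Your peripheral steps are sound. The necessity of \eqref{eq7:Spentrelazamiento} does follow from Zhelobenko's rule for $\Sp(n+1)\downarrow\Sp(n)$ (from a pattern $a_1\geq c_1\geq a_2\geq\cdots\geq a_{n+1}\geq c_{n+1}\geq 0$ interlacing with $c_1\geq b_1\geq c_2\geq\cdots\geq b_n\geq c_{n+1}$ one gets $a_i\geq c_i\geq b_i$ and $b_i\geq c_{i+1}\geq a_{i+2}$). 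The final character manipulation is also correct: the multiplicity of $V_b$ in an $\Sp(1)$-module with character $f$ is the coefficient of $x^{b+1}$ in $(x-x^{-1})f(x)$, and applying this to $\prod_i\chi_{V_{\delta_i}}$ reproduces exactly the displayed generating function.

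The genuine gap is the one you yourself flag: the assertion that the multiplicity space $\Hom_{\Sp(n)}(\sigma_{(b_1,\dots,b_n)},V_\Lambda)$ is isomorphic \emph{as an $\Sp(1)$-module} to $\bigotimes_{i=1}^{n+1}V_{\delta_i}$ is precisely the nontrivial content of the Lepowsky--Tsukamoto theorem, and you assert it rather than prove it. The dimension count is consistent (the number of Zhelobenko patterns $c_i$ with $\max(a_{i+1},b_i)\leq c_i\leq\min(a_i,b_{i-1})$ is $\prod_i(\delta_i+1)$), but dimensions alone do not determine the $\Sp(1)$-isotypic decomposition, which is what the formula computes; Zhelobenko's rule is not a priori $\Sp(1)$-equivariant, and one cannot simply read off $\Sp(1)$-weights from the intermediate vectors without constructing them. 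Closing this gap requires either Lepowsky's algebraic machinery (his multiplicity formula via partition functions for the pair $(\mathfrak{sp}(n+1),\mathfrak{sp}(n)\oplus\mathfrak{sp}(1))$) or Tsukamoto's explicit computation, so as written your proposal is an outline of the correct strategy rather than a proof.
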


It is important to note that the doubly interlacing  condition  \eqref{eq7:Spentrelazamiento} is a necessary but not a  sufficient condition {to have} $\dim \Hom_K(\tau_\mu,\pi_\Lambda)>0$.

It is well known that $\widehat G_{1_K}=\String{\dir,0}=\{\pi_{k\dir}:k\in\N_0\}$, where $\dir=\varepsilon_1+\varepsilon_2$. 
Camporesi in \cite[\S{}4]{Camporesi05Pacific} gave, for some particular choices of $\tau\in\widehat K$, an explicit parametrization of $\PP_\tau$ satisfying that $\widehat G_\tau=\bigcup_{\Lambda_0\in\PP_\tau} \String{\dir,\Lambda_0}$ (see also \cite[\S{}5]{HeckmanPruijssen16}). 
The next result provides infinitely many choices of $\tau$ such that $\widehat G_\tau$ is written as a finite disjoint union of strings of representations and furthermore, the strong representation-spectral converse holds for $(G,K,\tau)$. 

For non-negative integers $m,s$ satisfying $m\leq n-1$, let $\tau_{m,s}$ denote the irreducible representation of $K$ with highest weight 
\begin{equation}
\mu_{m,s}:= \sum_{i=1}^{m} \varepsilon_i + s\varepsilon_{n+1} \in \PP^{+}(K). 
\end{equation}
Lemma~\ref{lem7:Spbranchinglaw} immediately implies the following description of $\widehat G_{\tau_{m,s}}$.

\begin{lemma} \label{lem7:hatG_tauSp}
For $m,s\in\N_0$ and $m\leq n-1$, we have that
\begin{equation}
\widehat G_{\tau_{m,s}} 
= \bigcup_{\Lambda_0\in \PP_{\tau_{m,s}}}\{\pi_{k\dir +\Lambda_0}:k\geq0\} 
= \bigcup_{\Lambda_0\in \PP_{\tau_{m,s}}} \String{\dir,\Lambda_0},
\end{equation}
where $\dir=\varepsilon_1+\varepsilon_2$, $\PP_{\tau_{0,s}} = \{s\varepsilon_1\}$, and for $m\geq1$, 
\begin{equation}\label{eq7:SpPP_tau_ms}
\PP_{\tau_{m,s}} = \left\{ 
\begin{array}{ll}
\Lambda_0':=(s+2)\varepsilon_1+ \sum\limits_{i=2}^{m+1}\varepsilon_i, &
\Lambda_0'':=s\varepsilon_1+ \sum\limits_{i=2}^{m+1}\varepsilon_i   \quad (\text{if $s\geq1$}), \\
\Lambda_0''':=(s+1)\varepsilon_1+ \sum\limits_{i=2}^{m}\varepsilon_i, &
\Lambda_0'''':=(s+1)\varepsilon_1+ \sum\limits_{i=2}^{m+2}\varepsilon_i
\end{array}
\right\}.
\end{equation}
\end{lemma}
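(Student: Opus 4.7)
The strategy is to apply Tsukamoto's branching formula (Lemma~\ref{lem7:Spbranchinglaw}) directly to the $K$-type $\tau_{m,s}$, whose highest weight has coordinates $b_1=\dots=b_m=1$, $b_{m+1}=\dots=b_n=0$, and $b_{n+1}=s$. The necessary interlacing condition \eqref{eq7:Spentrelazamiento} translates, for $\Lambda=\sum_{i=1}^{n+1}a_i\varepsilon_i\in\PP^+(G)$, into $a_i\ge 1$ for $1\le i\le m$, $a_j\in\{0,1\}$ for $3\le j\le m+2$, and $a_j=0$ for $j\ge m+3$. For $m\ge 2$ the monotonicity $a_1\ge\cdots\ge a_{n+1}$ then forces $a_3=\dots=a_m=1$ and $(a_{m+1},a_{m+2})\in\{(0,0),(1,0),(1,1)\}$.

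Next, I compute the integers $\delta_i$ in \eqref{eq7:Sp-condnec}. A direct case check shows that under the constraints above only $\delta_1=a_1-a_2$ and $\delta_{m+1}=a_{m+1}-a_{m+2}$ can be nonzero, all other $\delta_i$ vanishing. Substituting in Lemma~\ref{lem7:Spbranchinglaw} and simplifying with the identity $(x^{d+1}-x^{-d-1})/(x-x^{-1})=x^d+x^{d-2}+\dots+x^{-d}$, the multiplicity $\dim\Hom_K(\tau_{m,s},\pi_\Lambda)$ becomes the coefficient of $x^{s+1}$ in $(x^{A+1}-x^{-A-1})(x^B+x^{B-2}+\dots+x^{-B})$, where $A=a_1-a_2\ge 0$ and $B=a_{m+1}-a_{m+2}\in\{0,1\}$. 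A short parity-range analysis shows this coefficient is $0$ or $1$ and equals $1$ precisely when $(A,B)\in\{(s,0),(s+1,1),(s-1,1)\}$, the last case requiring $s\ge 1$.

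Since $\dir=\varepsilon_1+\varepsilon_2$ preserves $A$ and the pair $(a_{m+1},a_{m+2})$ (for $m\ge 2$) while lowering $a_2$ by one, the base of each string is obtained by reducing $a_2$ to its minimum value $a_2=1$. Pairing the three admissible values of $(A,B)$ above with the three allowed configurations of $(a_{m+1},a_{m+2})$ yields exactly the four bases $\Lambda_0',\Lambda_0'',\Lambda_0''',\Lambda_0''''$ (with $\Lambda_0''$ present iff $s\ge 1$); the tuple $(A,(a_{m+1},a_{m+2}))$ is constant along each string and distinguishes these four weights, giving the disjointness and the finiteness of the union. The cases $m\in\{0,1\}$ are handled analogously with minor degenerations in the $\delta_i$: for $m=0$ only $\delta_1$ is active and the unique base is $s\varepsilon_1$, while for $m=1$ the pair $(a_2,a_3)$ plays the role of $(a_{m+1},a_{m+2})$ and an extra subcase $a_2=0$, in which $\delta_1=a_1-1$ and the multiplicity formula forces $a_1=s+1$, produces the base $\Lambda_0'''=(s+1)\varepsilon_1$. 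The main obstacle is the clean extraction of the coefficient of $x^{s+1}$: in the case $B=1$ two admissible values $A=s\pm 1$ arise simultaneously, which is the structural reason the lemma lists both $\Lambda_0'$ and $\Lambda_0''$ and also explains the restriction $s\ge 1$ for $\Lambda_0''$ (otherwise $A=s-1<0$ is not achievable).
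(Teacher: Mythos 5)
Your proposal is correct and follows exactly the route the paper intends: the paper offers no written proof beyond the remark that Lemma~\ref{lem7:Spbranchinglaw} ``immediately implies'' the statement, and your argument is precisely that computation carried out in detail (reduction of the interlacing condition, the observation that only $\delta_1$ and $\delta_{m+1}$ survive, and the extraction of the coefficient of $x^{s+1}$). The only point worth flagging is the degenerate case $m=1$, where $\Lambda_0'=\Lambda_0'''+\dir$ so the listed strings are not disjoint; this is consistent with your analysis and with the lemma, which asserts only the union, not disjointness.
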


\begin{remark}
The case when $s=1$ in Lemma~\ref{lem7:hatG_tauSp} is a particular case of \cite[Thm.~5.2]{HeckmanPruijssen16}.
\end{remark}

\begin{theorem} \label{thm7:Spone1-jump}
Let $G=\Sp(n+1)$, $K=\Sp(n)\times\Sp(1)$, and $m,s\in\N_0$, with  $m\leq n-1$.
Then, the strong representation-spectral converse is valid for  $(G,K,\tau_{m,s})$. 
\end{theorem}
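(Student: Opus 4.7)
The plan is to apply Theorem~\ref{thm3:tauiso=>tauequiv} to the string decomposition furnished by Lemma~\ref{lem7:hatG_tauSp}, with direction $\dir=\varepsilon_1+\varepsilon_2$ and the finite set $\PP_{\tau_{m,s}}$. The case $m=0$ is immediate, since $\PP_{\tau_{0,s}}=\{s\varepsilon_1\}$ is a singleton and $\widehat G_{\tau_{0,s}}$ is a single string, so no pair of distinct base weights arises and the hypotheses of Theorem~\ref{thm3:tauiso=>tauequiv} are vacuously met. Henceforth I would assume $m\ge 1$, so that $\PP_{\tau_{m,s}}$ consists of at most the four base weights $\{\Lambda_0',\Lambda_0'',\Lambda_0''',\Lambda_0''''\}$, with $\Lambda_0''$ present only when $s\ge 1$.

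A useful preliminary observation is that every irreducible representation of $G=\Sp(n+1)$ is self-dual, since the longest element of the Weyl group of type $C_{n+1}$ acts as $-\Id$ on $\mathfrak t$. Consequently $\pi_\Lambda^*\simeq\pi_\Lambda$ for every $\Lambda\in\PP^+(G)$, so the exceptional condition~\eqref{eq3:dual} reduces to an equation $\Lambda_0'=\Lambda_0+h\dir$ between two base weights, which is ruled out by the disjointness of the strings in $\widehat G_{\tau_{m,s}}$. Hence it is enough to verify that neither condition (i) nor condition (ii) of Proposition~\ref{prop3:stringeigenvalues} is satisfied for any pair of distinct base weights in $\PP_{\tau_{m,s}}$.

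I would then carry out a direct case-by-case computation: for each base weight I would evaluate $\langle\dir,\Lambda_0\rangle$ and the Casimir eigenvalue $\lambda(C,\pi_{\Lambda_0})$ via \eqref{eq3:lambda(C,pi)}, and then examine each of the (at most six) unordered pairs. The key arithmetic fact I expect to exploit is that the pairwise differences $\Lambda_0'-\Lambda_0''=2\varepsilon_1$, $\Lambda_0'-\Lambda_0'''=\varepsilon_1+\varepsilon_{m+1}$, $\Lambda_0'-\Lambda_0''''=\varepsilon_1-\varepsilon_{m+2}$, and their analogues have very small support, so the ratio $\langle\dir,\Lambda_0-\Lambda_0'\rangle/\langle\dir,\dir\rangle$ is either $0$ or a half-integer. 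In the half-integer cases, \eqref{eq3:condrara} fails because the ratio is not an integer, ruling out~(ii); in the remaining cases the two inner products with $\dir$ agree, and a short computation of $\lambda(C,\pi_{\Lambda_0})-\lambda(C,\pi_{\Lambda_0'})= \langle\Lambda_0-\Lambda_0',\Lambda_0+\Lambda_0'+2\rho_G\rangle$ produces a non-zero integer depending linearly on $n,m,s$, ruling out~(i).

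The main obstacle I anticipate is bookkeeping through the small boundary regimes, particularly $m=1$ (in which $\Lambda_0'''=(s+1)\varepsilon_1$ degenerates so that its inner products with $\dir$ look different from the generic pattern) and $s=0$ (in which $\Lambda_0''$ drops out of $\PP_{\tau_{m,s}}$), and confirming that the above dichotomy persists uniformly in $n,m,s$. Once this verification is complete, Theorem~\ref{thm3:tauiso=>tauequiv} guarantees that all but finitely many eigenvalues in $\mathcal E_{\tau_{m,s}}$ are tame, and the strong representation-spectral converse for $(G,K,\tau_{m,s})$ follows by the strong multiplicity one theorem (Theorem~\ref{thm3:finiteSMOTtau}).
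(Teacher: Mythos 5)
Your overall strategy is the paper's: decompose $\widehat G_{\tau_{m,s}}$ via Lemma~\ref{lem7:hatG_tauSp}, dispose of \eqref{eq3:dual} (which the self-duality of all $\Sp(n+1)$-representations indeed makes incompatible with disjointness of the strings), and rule out conditions (i)--(ii) of Proposition~\ref{prop3:stringeigenvalues} pair by pair. However, the dichotomy on which your verification rests is false, and it is contradicted by your own list of differences. You claim that $\langle\dir,\Lambda_0-\Lambda_0'\rangle/\langle\dir,\dir\rangle$ is always either $0$ (whence you check $\lambda(C,\pi_{\Lambda_0})\neq\lambda(C,\pi_{\Lambda_0'})$) or a non-integer half-integer (whence (ii) fails automatically). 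But for the pair $\{\Lambda_0',\Lambda_0''\}$ the difference is $2\varepsilon_1$, so the ratio equals $1$: a nonzero integer. For that pair neither branch of your argument applies, and you must actually test the second equality in \eqref{eq3:condrara}, i.e.\ compare $1$ with
$\langle \Lambda_0'-\Lambda_0'',\, \Lambda_0'+\Lambda_0''+2\rho_G\rangle / \langle \dir,\,\Lambda_0'+\Lambda_0''+2\rho_G\rangle$.
This is precisely the one computation the paper singles out: the latter fraction works out to a quantity of the form $\tfrac{4n+4s+4}{4n+2s+2}$, which lies strictly between $1$ and $2$ for $s\geq 1$ (the only regime in which $\Lambda_0''$ occurs), so \eqref{eq3:condrara} fails and (ii) does not hold. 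Without this step your proof does not exclude infinitely many coincidences between $\mathcal E(\dir,\Lambda_0')$ and $\mathcal E(\dir,\Lambda_0'')$.

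The $m=1$ degeneration you flag is also more serious than a change of inner products: there $\Lambda_0'-\Lambda_0'''=\varepsilon_1+\varepsilon_2=\dir$, so $\String{\dir,\Lambda_0'}$ is \emph{contained} in $\String{\dir,\Lambda_0'''}$ and the union in Lemma~\ref{lem7:hatG_tauSp} is not disjoint as written; one must discard $\Lambda_0'$ from $\PP_{\tau_{1,s}}$ before invoking Theorem~\ref{thm3:tauiso=>tauequiv} (the paper glosses over this as well, so it is a shared, repairable blemish rather than a flaw specific to your argument). Apart from these two points — the ratio-$1$ pair, which is a genuine missing step, and the $m=1$ pruning — the remaining pairs do behave as you predict (ratios $\pm\tfrac12$ or $0$, with $\lambda(C,\pi_{\Lambda_0'''})-\lambda(C,\pi_{\Lambda_0''''})$ a nonzero linear expression in $n-m$), and the conclusion then follows from Theorems~\ref{thm3:tauiso=>tauequiv} and \ref{thm3:finiteSMOTtau} exactly as you say.
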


\begin{proof}
The case $\mu_{0,s} = s\varepsilon_{n+1}$ is obvious  since $\widehat G_{\tau_{0,s}} = \String{\dir, s\varepsilon_1}$.
We thus assume $m > 0$. 
We consider  conditions (i) and (ii) in Proposition~\ref{prop3:stringeigenvalues} for each pair of elements in $\PP_{\tau_{m,s}}$ (see \eqref{eq7:SpPP_tau_ms}).
One has that $\langle \dir,\Lambda_0'-\Lambda_0'''\rangle/ \langle\dir,\dir\rangle =1/2\notin\Z$, thus (ii) is not satisfied for $\Lambda_0',\Lambda_0'''$. 
The same argument applies to all pairs except for $\{\Lambda_0',\Lambda_0''\}$ and $\{\Lambda_0''',\Lambda_0''''\}$. 
{
In the first case, one has that 
$$
\langle \dir, \Lambda_0'-\Lambda_0'' \rangle / \langle\dir,\dir\rangle=1\neq \frac{4n+4s+4}{4n+2s+2} = \frac{\langle \Lambda_0' -\Lambda''_0, \Lambda_0'+\Lambda''_0 +2\rho_G\rangle} {\langle \dir ,\Lambda_0'+\Lambda''_0+2\rho_G\rangle},
$$
thus (ii) does not hold. }
In the case of $\{\Lambda_0''',\Lambda_0''''\}$, it is a simple matter to check that $\langle \dir,\Lambda_0'''- \Lambda_0''''\rangle=0$ and $\lambda(C,\pi_{\Lambda_0'''}) - \lambda(C,\pi_{\Lambda_0''''}) = (4n-4m+2)\neq 0$. 
This completes the proof, in light of Theorem~\ref{thm3:tauiso=>tauequiv}. 	
\end{proof}

\subsubsection*{Applications to $p$-form representations}
We now consider the representations $\tau_p$ associated to the $p$-form spectrum. 
Tsukamoto in \cite[page 421]{Tsukamoto81} explained an algorithm to decompose $\tau_p$ as a sum of irreducible representations of $K$.
He also gave the following explicit expressions for the first five cases: 
\begin{align}\label{eq7:Sptau_p}
\tau_0&\simeq 1_K, \qquad \tau_1\simeq \tau_{\varepsilon_{1}+\varepsilon_{n+1}}, \qquad 
\tau_2\simeq \tau_{2\varepsilon_1} \oplus \tau_{2\varepsilon_{n+1}}\oplus \tau_{\varepsilon_{1}+\varepsilon_{2}+2\varepsilon_{n+1}}, \\ \notag
\tau_3 & \simeq \tau_{\varepsilon_{1}+\varepsilon_{n+1}} \oplus \tau_{2\varepsilon_{1}+\varepsilon_{2}+\varepsilon_{n+1}} \oplus \tau_{\varepsilon_{1}+3\varepsilon_{n+1}} \oplus \tau_{\varepsilon_{1}+\varepsilon_{2}+\varepsilon_{3}+3\varepsilon_{n+1}} ,\\ \notag
\tau_4 & \simeq 
\tau_0 \oplus 
\tau_{\varepsilon_{1}+\varepsilon_{2}} \oplus 
\tau_{2\varepsilon_{1}+2\varepsilon_{2}} \oplus
\tau_{\varepsilon_{1}+\varepsilon_{2}+2\varepsilon_{n+1}} \oplus
\tau_{2\varepsilon_{1}+2\varepsilon_{n+1}} \oplus
\tau_{2\varepsilon_{1}+\varepsilon_{2}+\varepsilon_{3}+2\varepsilon_{n+1}} \oplus \\ \notag &\qquad 
\tau_{4\varepsilon_{n+1}} \oplus 
\tau_{\varepsilon_{1}+\varepsilon_{2}+4\varepsilon_{n+1}} \oplus
\tau_{\varepsilon_{1}+\varepsilon_{2}+\varepsilon_{3}+\varepsilon_{4}+4\varepsilon_{n+1}}.
\end{align}
However, to our best knowledge, there is no known explicit expression valid for every $p$.
The situation is {even} worse for quaternionic Grassmann spaces (cf.\ \cite[\S{}3]{Chami12}). 

We now show the strong representation-spectral converse for $(G,K,\tau_p)$ for $p=0,1$. 
It remains open to us for $p\ge 2$.

\begin{theorem}\label{thm7:Spp-iso}
The strong representation-spectral converse is valid for $(\Sp(n+1), \Sp(n)\times\Sp(1) ,\tau_{p})$ for $p=0,1$ and also for $(\Sp(n+1), \Sp(n)\times\Sp(1) ,\tau)$ where $\tau$ is any irreducible constituent of $\tau_2$. 
\end{theorem}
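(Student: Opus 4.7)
The proof splits by cases along the decomposition \eqref{eq7:Sptau_p}. For $p=0$ the $K$-type is trivial, $\widehat G_{1_K} = \String{\dir,0}$ is a single string, and Theorem~\ref{thm3:finiteSMOTtau} immediately yields the strong converse. For $p=1$ one reads off $\tau_1 \simeq \tau_{1,1}$ in the notation of Theorem~\ref{thm7:Spone1-jump}, which therefore applies directly. For a single irreducible constituent $\tau$ of $\tau_2$, the decomposition \eqref{eq7:Sptau_p} leaves three possibilities: $\tau_{2\varepsilon_{n+1}} = \tau_{0,2}$ and $\tau_{\varepsilon_1+\varepsilon_2+2\varepsilon_{n+1}} = \tau_{2,2}$ fit the framework of Theorem~\ref{thm7:Spone1-jump} (the latter with the borderline $n=2$ requiring a direct rerun of the same argument, since $m=n$ lies just outside its stated range), so only $\tau = \tau_{2\varepsilon_1}$ requires a genuinely new argument, its highest weight not being of the form $\mu_{m,s}$.

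For this remaining case I will apply Theorem~\ref{thm3:tauiso=>tauequiv} after a direct analysis of $\PP_{\tau_{2\varepsilon_1}}$ via Lemma~\ref{lem7:Spbranchinglaw}. With $\mu = 2\varepsilon_1$ the interlacing \eqref{eq7:Spentrelazamiento} forces $a_j = 0$ for $j \geq 4$ and $a_3 \in \{0,1,2\}$, so $\Lambda = a_1\varepsilon_1 + a_2\varepsilon_2 + a_3\varepsilon_3$. Only $\delta_1$ and $\delta_2$ from \eqref{eq7:Sp-condnec} can be nonzero, and since $b_{n+1}=0$ the coefficient extraction of Lemma~\ref{lem7:Spbranchinglaw} reduces, after a short computation with $q$-brackets, to the single condition $\delta_1 = \delta_2$, equivalent to the integer equation $a_1 + a_3 = a_2 + 2$. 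Isolating the bases of the strings with direction $\dir = \varepsilon_1 + \varepsilon_2$ (those $\Lambda$ with $\Lambda - \dir \notin \PP_\tau$) then produces exactly
\begin{equation*}
\PP_{\tau_{2\varepsilon_1}} = \{\,\Lambda_1 := 2\varepsilon_1,\; \Lambda_2 := 2\varepsilon_1 + \varepsilon_2 + \varepsilon_3,\; \Lambda_3 := 2\varepsilon_1 + 2\varepsilon_2 + 2\varepsilon_3\,\}.
\end{equation*}

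It then remains to verify that conditions (i) and (ii) of Proposition~\ref{prop3:stringeigenvalues} both fail for every pair $\Lambda_i \neq \Lambda_j$. Since $\langle\dir,\Lambda_i\rangle$ takes the pairwise distinct values $2, 3, 4$, condition (i) cannot hold. For (ii), two of the three pairs give $\langle\dir,\Lambda_i-\Lambda_j\rangle/\langle\dir,\dir\rangle = \pm 1/2 \notin \Z$; the remaining pair $(\Lambda_1, \Lambda_3)$ produces $m = -1$, but the second ratio in \eqref{eq3:condrara} evaluates to $-(2n+1)/(n+2)$, which equals $-1$ only at $n=1$ and hence differs from $-1$ under the standing hypothesis $n\geq 2$. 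Finally, \eqref{eq3:dual} cannot absorb any coincidence because $\Sp(n+1)$ has no nontrivial outer automorphism and its irreducible representations are all self-dual, so $V_{\pi_{k\dir+\Lambda_j}}$ can never be isomorphic to $V_{\pi_{(k+h)\dir+\Lambda_i}}^*$ for two distinct bases $\Lambda_i, \Lambda_j$. Theorem~\ref{thm3:tauiso=>tauequiv} then delivers the strong representation-spectral converse.

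The main obstacle is the explicit determination of $\PP_{\tau_{2\varepsilon_1}}$: Lepowsky's multiplicity formula is substantially more intricate than the multiplicity-free interlacings used in the orthogonal and unitary cases, and the reduction to the clean condition $\delta_1 = \delta_2$ (and the resulting list of only three bases) is where most of the work lies.
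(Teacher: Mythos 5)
Your proof is correct and follows essentially the same route as the paper: $p=0$ is immediate, $p=1$ and the constituents $\tau_{0,2}$, $\tau_{2,2}$ of $\tau_2$ reduce to Theorem~\ref{thm7:Spone1-jump}, and $\tau_{2\varepsilon_1}$ is handled by computing $\PP_{\tau_{2\varepsilon_1}}=\{2\varepsilon_1,\;2\varepsilon_1+\varepsilon_2+\varepsilon_3,\;2\varepsilon_1+2\varepsilon_2+2\varepsilon_3\}$ and checking that conditions (i)--(ii) of Proposition~\ref{prop3:stringeigenvalues} fail, with your explicit derivation of the condition $\delta_1=\delta_2$ via Lemma~\ref{lem7:Spbranchinglaw} and the ratio $-(2n+1)/(n+2)$ merely supplying details the paper leaves to the reader. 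Your remark that the case $n=2$, $m=n$ for $\tau_{2,2}$ falls outside the stated range of Theorem~\ref{thm7:Spone1-jump} is a legitimate borderline point that the paper itself glosses over.
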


\begin{proof}
We will apply Theorem~\ref{thm3:tauiso=>tauequiv}. 
We let $\dir=\varepsilon_{1}+\varepsilon_{2}$. 
The case $p=0$ follows immediately since $\widehat G_{\tau_0} = \String{\dir,0}$. 
If $p=1$, then $\tau_1$ is irreducible with highest weight $\varepsilon_1+\varepsilon_{n+1}$, thus this case was already shown in Theorem~\ref{thm7:Spone1-jump}. 

The irreducible constituents of $\tau_2$ are $\tau_{2\varepsilon_1}$, $\tau_{2\varepsilon_{n+1}}$, and $\tau_{\varepsilon_{1}+\varepsilon_{2}+2\varepsilon_{n+1}}$ by \eqref{eq7:Sptau_p}. 
The last two cases follow by Theorem~\ref{thm7:Spone1-jump}. 
One can check by using Lemma~\ref{lem7:Spbranchinglaw} that $\widehat G_{\tau_{2\varepsilon_1}} = \bigcup_{\Lambda_0\in \PP_{\tau_{2\varepsilon_1}}} \String{\dir,\Lambda_0}$, where
$\PP_{\tau_{2\varepsilon_1}} = \{2\varepsilon_1,\, 2\varepsilon_1+\varepsilon_2+\varepsilon_3,\, 2\varepsilon_1+2\varepsilon_2+2\varepsilon_3 \}$.
Then, it is a simple matter to check that conditions (i) and (ii) in Proposition~\ref{prop3:stringeigenvalues} are not satisfied for any pair of elements in $\PP_{\tau_{2\varepsilon_1}}$ and hence Theorem~\ref{thm3:tauiso=>tauequiv} applies. 
\end{proof}

\begin{example}\label{ex7:Spunknowncases}
Let $\Lambda_0= 3\varepsilon_{1}+ \varepsilon_{2}+ \varepsilon_{3}+\varepsilon_{4}$ and $\Lambda_0'= 2\varepsilon_{1}+2\varepsilon_{2}+2\varepsilon_{3}$. 
Then $\Lambda_0-\Lambda_0'= \varepsilon_{1}-\varepsilon_{2}- \varepsilon_{3}+\varepsilon_{4}$, thus $\langle \dir, \Lambda_0-\Lambda_0'\rangle =0$ and 
\begin{align}
\lambda(C,\pi_{\Lambda_0}) - \lambda(C,\pi_{\Lambda_0'}) 
	&= \langle \Lambda_0-\Lambda_0', \Lambda_0+\Lambda_0'+2\rho_G\rangle\\
	&= (5+2n+2)-(3+2n)-(3+2n-2)+(1+2n-4)=0. \notag
\end{align}
Hence, $\lambda(C,\pi_{k\dir+\Lambda_0}) = \lambda(C,\pi_{k\dir+\Lambda_0'})$ for all $k\geq0$ by Proposition~\ref{prop3:stringeigenvalues}.

Let $\mu = 2\varepsilon_{1}+\varepsilon_{2}+\varepsilon_{n+1}$.
One can check that $\widehat G_{\tau_{\mu}} = \bigcup_{\Lambda_0\in \PP_{\tau_{\mu}}} \String{\dir,\Lambda_0}$, where  
\begin{align*}
\PP_{\tau_{\mu}} = \left\{
\begin{array}{lll}
3\varepsilon_{1}+\varepsilon_{2}, &
4\varepsilon_{1}+\varepsilon_{2}+\varepsilon_{3}, &
3\varepsilon_{1}+\varepsilon_{2}+\varepsilon_{3}+\varepsilon_{4}, \\
2\varepsilon_{1}+2\varepsilon_{2}, &
2\varepsilon_{1}+2\varepsilon_{2}+2\varepsilon_{3}, &
2\varepsilon_{1}+2\varepsilon_{2}+\varepsilon_{3}+\varepsilon_{4}, \\
2\varepsilon_{1}+\varepsilon_{2}+\varepsilon_{3}, &
4\varepsilon_{1}+2\varepsilon_{2}+2\varepsilon_{3}, &
3\varepsilon_{1}+2\varepsilon_{2}+2\varepsilon_{3}+\varepsilon_{4}
\end{array}
\right\}.
\end{align*}
Since $\Lambda_0,\Lambda_0'\in \PP_{\tau_{\mu}}$, we cannot ensure that the strong representation-spectral converse is valid for $(\Sp(n+1),\Sp(n)\times\Sp(1), \tau_\mu)$. 
Moreover, since $\tau_{\mu}$ is an irreducible constituent of $\tau_3$ by \eqref{eq7:Sptau_p}, then $\PP_{\tau_{\mu}}\subset \PP_{\tau_3}$ (see Remark~\ref{rem2:tau-reducible}), it follows that  $\tau_3$ does not satisfy the assumptions in Theorem~\ref{thm3:tauiso=>tauequiv} either. 

Furthermore, the strings $\String{\dir,\Lambda_0}$ and $\String{\dir,\Lambda_0'}$ belong to $\widehat G_{\tau_2}$ and consequently we cannot ensure the validity of the strong representation-spectral converse for $(G,K,\tau_2)$. 
In fact, we have seen in the proof of Theorem~\ref{thm7:Spp-iso} that $\Lambda_0\in\PP_{\tau_{2\varepsilon_1}}$.
Also, $\Lambda_0 \in \PP_{\tau_{\varepsilon_1+\varepsilon_2+2\varepsilon_{n+1}}}$ by Lemma~\ref{lem7:hatG_tauSp}. 
\end{example}

\section{Cayley plane}
\label{sec:F4}
As a final case, we will look at the Cayley plane (or octonion projective plane) $P^2(\mathbb{O})$, that corresponds to the pair $(G,K):=(\op{F}_4, \Spin(9))$. 
Here, $\op{F}_4$ denotes the simply connected compact Lie group with Lie algebra $\mathfrak f_4$, the compact Lie algebra whose complexification is of exceptional type $F_4$.

An explicit {general} branching rule for this pair is not available.
Particular cases were studied by Lepowsky~\cite{Lepowsky71} and by Mashimo~\cite{Mashimo97,Mashimo06}.
Heckman and van Pruijssen~\cite{HeckmanPruijssen16} described $\widehat G_\tau$ as a finite and disjoint union of strings of representations for every $\tau\in\widehat K$ such that $(G,K,\tau)$ is multiplicity free, i.e.\ $\dim \Hom_K(\pi,\tau)\leq 1$ for all $\pi\in\widehat G$. 
Furthermore, Mashimo~\cite{Mashimo97,Mashimo06} covered all the cases necessary to compute the spectrum on $p$-forms of $P^2(\mathbb{O})$ for every $p$. 
These results will be sufficient to exhibit an infinite set of $\tau \in \widehat K$ such that the  strong representation-spectral converse is valid for $(G,K,\tau)$.

To state the results, we introduce the notation of the positive root systems corresponding to $\op{F_4}$ and $\Spin(9)$, as chosen in \cite{HeckmanPruijssen16} (which coincides with \cite{Knapp-book-beyond}). 
We pick a common maximal torus $T$ in $\op{F}_4$ and $\Spin(9)$ with a basis $\{\varepsilon_1,\dots,\varepsilon_4\}$ of $\mathfrak t_\C^*$ and we use the $\Ad(G)$-invariant inner product $\langle\cdot,\cdot\rangle$ on $\mathfrak g$ such that the Hermitian extension of $\langle\cdot, \cdot\rangle|_{\mathfrak t}$ to $\mathfrak t_\C^*$ makes  $\{\varepsilon_1,\dots,\varepsilon_4\}$ orthonormal. 

We pick the positive system of roots such that
\begin{align*}
\Phi^+(\mathfrak k_\C,\mathfrak t_\C) 
&= \{\varepsilon_i \pm \varepsilon_j: 1\leq i<j\leq 4\} \cup \{\varepsilon_i: 1\leq i\leq 4\},\\
\Phi^+(\mathfrak g_\C,\mathfrak t_\C) 
&=\Phi^+(\mathfrak k_\C,\mathfrak t_\C) \cup \{\tfrac12 (\varepsilon_1\pm\varepsilon_2  \pm\varepsilon_3  \pm\varepsilon_4)\}.
\end{align*}
One can check that $\rho_G =\tfrac12(11\varepsilon_1+ 5\varepsilon_2+3\varepsilon_3+\varepsilon_4)$ and 
$$
\PP(G)=\PP(K)= \left\{ a_1\varepsilon_1 +a_2\varepsilon_2 + a_3\varepsilon_3 + a_4\varepsilon_4 : 2a_1,a_1-a_2,a_1-a_3,a_1-a_4\in\Z\right\}
$$ 
({equivalently},  $\sum_{i=1}^4 a_i\varepsilon_i\in \PP(G)$ if and only if the coefficients {$a_i$} are all integers or all half integers).
The fundamental weights for $\Phi(\mathfrak g_\C,\mathfrak t_\C) $ are 
\begin{align*}
\omega_1&:=\varepsilon_{1},& \omega_2&:=\tfrac12(3\varepsilon_1+ \varepsilon_2+ \varepsilon_3+ \varepsilon_4),&
\omega_3&:=2\varepsilon_{1}+\varepsilon_{2}+ \varepsilon_{3}, &
\omega_4&:=\varepsilon_{1}+\varepsilon_{2},
\end{align*}
and {those} for $\Phi(\mathfrak k_\C,\mathfrak t_\C) $ are
\begin{align*}
\upsilon_1&:=\varepsilon_{1},& \upsilon_2&:=\varepsilon_{1}+\varepsilon_{2},& \upsilon_3&:=\varepsilon_1+ \varepsilon_2+ \varepsilon_3,& 
\upsilon_4&:= \tfrac12(\varepsilon_1+ \varepsilon_2+ \varepsilon_3+ \varepsilon_4).
\end{align*}

The next result is proved in \cite[Prop.~6.3 and Thm.~6.4]{HeckmanPruijssen16}.

\begin{lemma}\label{lem8:hatG_tau}
Let $\tau \in\widehat K$ with highest weight $\mu=b_1\upsilon_1+b_2\upsilon_2$
with $b_1,b_2\in\N_0$. 
Then, 
\begin{equation}
\widehat G_\tau = \bigcup_{\Lambda_0\in \PP_\tau} \String{\dir_1,\Lambda_0} = \{\pi_{\Lambda_0+k\dir_1}:k\in\N_0\},
\end{equation}
where $\PP_\tau$ is given by 
\begin{align}
&\left\{
\sum_{i=1}^4a_i\omega_i:
\begin{array}{l}
	a_2+a_3+a_4\leq b_1+b_2,\\
	a_3+a_4\leq b_2\leq a_2+a_3+a_4,\\
	a_1=b_1+b_2-a_2-a_3-a_4
\end{array}
\right\}.
\end{align}
\end{lemma}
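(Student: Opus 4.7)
The plan is to combine three ingredients. First, since $(G,K) = (\op{F}_4, \Spin(9))$ is a symmetric pair of real rank one, the spherical representations form a single string $\widehat G_{1_K} = \{\pi_{k\omega_1} : k \in \N_0\}$ with direction $\dir = \omega_1 = \varepsilon_1$. Applying the general decomposition of $\widehat G_\tau$ into disjoint $\dir$-strings from \cite[Lem.~3.3]{LM-strongmultonethm} then yields a finite disjoint union $\widehat G_\tau = \bigcup_{\Lambda_0 \in \PP_\tau} \String{\omega_1, \Lambda_0}$ for some $\PP_\tau \subset \PP^+(G)$, and the task reduces to describing $\PP_\tau$ explicitly as the set of dominant $\Lambda_0$ such that $\pi_{\Lambda_0} \in \widehat G_\tau$ but $\pi_{\Lambda_0 - \omega_1}$ is either not dominant or does not belong to $\widehat G_\tau$.

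Next, I would verify that for $\mu = b_1\upsilon_1 + b_2\upsilon_2 = (b_1+b_2)\varepsilon_1 + b_2\varepsilon_2$, the triple $(G,K,\tau_\mu)$ is multiplicity free, i.e.\ $\dim\Hom_K(\tau_\mu,\pi_\Lambda) \leq 1$ for every $\pi_\Lambda \in \widehat G$. This is the key structural input that activates the Heckman--van Pruijssen machinery for spherical varieties: for a multiplicity-free rank-one triple, the set $\PP_\tau$ is naturally identified with the lattice points of an explicit convex polytope determined by $\mu$ in the weight lattice of $G$.

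The computational heart of the argument is to rewrite those polytope inequalities in the fundamental-weight basis of $G$. Expanding $\Lambda_0 = \sum_i a_i\omega_i$ in the $\varepsilon$-basis and matching against the highest weight $\mu = (b_1+b_2)\varepsilon_1 + b_2\varepsilon_2$ yields the chain $a_3 + a_4 \leq b_2 \leq a_2 + a_3 + a_4 \leq b_1 + b_2$; the first two inequalities encode the branching constraint $\tau_\mu \subset \pi_{\Lambda_0}|_K$, while $a_2 + a_3 + a_4 \leq b_1 + b_2$ becomes the dominance condition $a_1 \geq 0$ after imposing the ``lowest-in-string'' equality $a_1 = b_1 + b_2 - a_2 - a_3 - a_4$, which minimizes the first coordinate of $\Lambda_0$ subject to $\pi_{\Lambda_0} \in \widehat G_\tau$.

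The main obstacle is the absence of a general explicit branching rule from $\op{F}_4$ to $\Spin(9)$: unlike the classical cases treated in Sections~\ref{sec:oddspheres}--\ref{sec:Sp}, one cannot read off $\PP_\tau$ from a Gelfand--Tsetlin pattern. I would circumvent this by directly invoking Proposition~6.3 and Theorem~6.4 of \cite{HeckmanPruijssen16}, which handle all multiplicity-free triples on rank-one spherical pairs, and by cross-checking the resulting formula against Mashimo's explicit computations in \cite{Mashimo97,Mashimo06}, which cover every $\mu$ of the form $b_1\upsilon_1 + b_2\upsilon_2$ arising as an irreducible constituent of a $p$-form representation.
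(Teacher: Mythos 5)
Your proposal is correct and follows essentially the same route as the paper, which gives no argument of its own beyond the citation "proved in \cite[Prop.~6.3 and Thm.~6.4]{HeckmanPruijssen16}"; your additional scaffolding (the string decomposition from \cite[Lem.~3.3]{LM-strongmultonethm}, the multiplicity-freeness of $(G,K,\tau_{b_1\upsilon_1+b_2\upsilon_2})$, and the translation $\mu=(b_1+b_2)\varepsilon_1+b_2\varepsilon_2$ into the chain $a_3+a_4\leq b_2\leq a_2+a_3+a_4\leq b_1+b_2$ with $a_1=b_1+b_2-a_2-a_3-a_4$ as the string-bottom condition) is consistent with how that reference is used here. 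The cross-check against Mashimo's tables is a reasonable sanity check but is not part of, nor needed for, the paper's justification.
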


We are now in a position to prove the main result {in} this section.
The complexity of checking the conditions in Theorem~\ref{thm3:tauiso=>tauequiv} is high, for an arbitrary element $\tau_\mu$ as in Lemma~\ref{lem8:hatG_tau}, so we restrict our attention to those with $\mu\in \N_0\upsilon_1$.

\begin{theorem} \label{thm8:F4-infinity}
Let $G=\op{F}_4$, $K=\Spin(9)$.
Then, the representation-spectral converse is  valid for $(G,K,\tau_{b\upsilon_1})$, for any $b\in\N_0$. 
\end{theorem}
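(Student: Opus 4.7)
The plan is to apply Theorem~\ref{thm3:tauiso=>tauequiv} using the description of $\widehat G_{\tau_{b\upsilon_1}}$ provided by Lemma~\ref{lem8:hatG_tau}. First I would specialize that lemma to $\mu=b\upsilon_1$ (so $b_1=b$, $b_2=0$). The constraint $a_3+a_4\leq b_2=0$, together with $a_3,a_4\geq 0$, forces $a_3=a_4=0$; setting $j:=a_2$ and using $a_1=b-j$ yields the finite, disjoint decomposition
\begin{equation*}
\widehat G_{\tau_{b\upsilon_1}} \;=\; \bigcup_{j=0}^{b}\,\String{\dir_1,\Lambda_0^{(j)}},
\qquad
\Lambda_0^{(j)} := (b-j)\omega_1+j\omega_2 \;=\; \left(b+\tfrac{j}{2}\right)\varepsilon_1 + \tfrac{j}{2}(\varepsilon_2+\varepsilon_3+\varepsilon_4),
\end{equation*}
in direction $\dir_1=\omega_1=\varepsilon_1$. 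This verifies hypothesis \eqref{eq3:hatG_tau-finito}.

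Next I would check, for every pair $0\leq j'<j\leq b$, that neither condition (i) nor condition (ii) of Proposition~\ref{prop3:stringeigenvalues} holds for $(\Lambda_0^{(j)},\Lambda_0^{(j')})$. From $\Lambda_0^{(j)}-\Lambda_0^{(j')}=\tfrac{j-j'}{2}(\varepsilon_1+\varepsilon_2+\varepsilon_3+\varepsilon_4)$ and $\langle\dir_1,\dir_1\rangle=1$, one gets $\langle\dir_1,\Lambda_0^{(j)}-\Lambda_0^{(j')}\rangle=(j-j')/2\neq 0$, which rules out (i). For (ii), a short computation using $2\rho_G=11\varepsilon_1+5\varepsilon_2+3\varepsilon_3+\varepsilon_4$ reduces the equality \eqref{eq3:condrara} to
\begin{equation*}
\frac{j-j'}{2} \;=\; \frac{(j-j')(b+j+j'+10)}{2b+(j+j')/2+11},
\end{equation*}
and clearing denominators collapses this to $j+j'=-6$, which is incompatible with $j,j'\geq 0$. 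Hence (ii) fails as well.

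Finally, since the longest element of the Weyl group of $F_4$ acts as $-1$ on $\mathfrak t_\C^*$, every irreducible representation of $G$ is self-dual, so the escape clause \eqref{eq3:dual} of Theorem~\ref{thm3:tauiso=>tauequiv} cannot reintroduce infinitely many non-tame coincidences. Invoking Theorem~\ref{thm3:tauiso=>tauequiv} then concludes the argument. The only delicate point is the second step: one must check the arithmetic identity \emph{uniformly} in the pair $(j,j')$, but since the computation collapses cleanly to $j+j'=-6$ independently of $b$, no genuine obstacle arises.
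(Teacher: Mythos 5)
Your proposal is correct and follows essentially the same route as the paper: specializing Lemma~\ref{lem8:hatG_tau} to $\mu=b\upsilon_1$ to get $\PP_\tau=\{(b-a)\omega_1+a\omega_2:0\leq a\leq b\}$, ruling out condition (i) via $\langle\dir_1,\Lambda_0-\Lambda_0'\rangle=(a-a')/2\neq0$, and ruling out condition (ii) by the same arithmetic reduction (the paper's two inner products give exactly your identity, collapsing to $a+a'=-6$). Your closing remark about self-duality and \eqref{eq3:dual} is harmless but unnecessary, since that clause is only relevant when (i) or (ii) actually holds.
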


\begin{proof}
Throughout the proof, we fix $b\in\N_0$ and abbreviate $\tau=\tau_{b\upsilon_1}$. 
Lemma~\ref{lem8:hatG_tau} gives $\widehat G_\tau =\bigcup_{\Lambda_0\in \PP_\tau} \String{\dir_1,\Lambda_0}$ with
\begin{equation}
\PP_\tau=\{(b-a)\omega_1+ a\omega_2:0\leq a\leq b\}. 
\end{equation}
{It remains to check that conditions (i) and (ii) in Proposition~\ref{prop3:stringeigenvalues} do not hold, for each pair of elements in $\PP_{\tau}$.}

Let $\Lambda_0=(b-a)\omega_1+ a\omega_2, \Lambda_0'=(b-a')\omega_1+ a'\omega_2 \in\PP_\tau$ for some integers $0\leq a'<a\leq b$.  
Since $\Lambda_0-\Lambda_0'= (a-a')(\omega_2-\omega_1) = \frac{a-a'}{2}(\varepsilon_{1}+ \varepsilon_{2}+ \varepsilon_{3}+ \varepsilon_{4})$.
It follows that $\langle \dir_1,\Lambda_0-\Lambda_0'\rangle = \frac{a-a'}{2}\neq0$, therefore (i) does not hold.
Furthermore, it is a simple matter to check that
\begin{align*}
\langle \dir_1 ,\Lambda_0+\Lambda_0'+2\rho_G\rangle
&= (11+2b+\tfrac{a+a'}{2}),
\\
\langle \Lambda_0-\Lambda_0', \Lambda_0+\Lambda_0' +2\rho_G\rangle
&= \tfrac{a-a'}{2} (20+2b+2(a+a')), 
\end{align*}
which immediately gives that \eqref{eq3:condrara} does not hold, so also (ii) does not.
\end{proof}

\subsubsection*{Applications to $p$-form representations}
We now consider the representation-spectral converse on $p$-forms of $P^2(\mathbb O)$. 
We will assume that $0\leq p\leq 8$ without loosing generality since $\tau_p$ and $\tau_{16-p}$ are equivalent. Mashimo listed the irreducible representations $\tau_\mu$ of $K$ that are constituents of $\tau_p$ (see \cite[Table 1]{Mashimo97}), together with the representations $\pi_\Lambda$ in $\widehat G_{\tau_\mu}$ with  
multiplicity $[\tau_\mu: \pi_\Lambda|_K]\ne 0$ (see \cite[Tables 1--28]{Mashimo06}).
Mashimo concludes the calculation of every $p$-spectra of $P^2(\mathbb O)$ by giving the Casimir eigenvalues corresponding to the irreducible representations of $G$ such that the restriction to $K$ contains an 
irreducible representation of $K$ occurring in $\tau_p$ (see \cite[Table 2]{Mashimo97} for $0\leq p\leq 5$ and \cite[Tables 29--31]{Mashimo06} for $6\leq p\leq 8$).

In Mashimo's positive root system convention, the decomposition of the representation $\tau_p$ (see Definition~\ref{def2:tau_p}) is  as follows (see \cite[Subsect.~2.3]{Mashimo97}):
\begin{align}\label{eq8:F4tau_p}
\tau_0&\simeq 
	1_K, 
\qquad 
\tau_1\simeq 
	\tau_{\upsilon_4}, 
\qquad 
\tau_2\simeq 
	\tau_{\upsilon_2} \oplus \tau_{\upsilon_3}, 
\qquad
\tau_3  \simeq 
	\tau_{\upsilon_2+\upsilon_4} \oplus
	\tau_{\upsilon_1+\upsilon_4},
\\ \notag
\tau_4 & \simeq 
	\tau_{2\upsilon_4} \oplus 
	\tau_{2\upsilon_1} \oplus 
	\tau_{\upsilon_1+\upsilon_2} \oplus
	\tau_{\upsilon_1+2\upsilon_4} \oplus
	\tau_{2\upsilon_2}  ,
\\ \notag
\tau_5 & \simeq 
	\tau_{\upsilon_1+\upsilon_4} \oplus 
	\tau_{\upsilon_2+\upsilon_4} \oplus 
	\tau_{3\upsilon_4} \oplus
	\tau_{2\upsilon_1+\upsilon_4} \oplus
	\tau_{\upsilon_1+\upsilon_2+\upsilon_4} 
,\\ \notag
\tau_6 & \simeq 
	\tau_{\upsilon_2} \oplus 
	\tau_{\upsilon_3} \oplus 
	\tau_{\upsilon_1+\upsilon_2}\oplus
	\tau_{\upsilon_1+\upsilon_3} \oplus
	\tau_{\upsilon_1+2\upsilon_4} \oplus
	\tau_{\upsilon_2+2\upsilon_4} \oplus
	\tau_{2\upsilon_1+\upsilon_2} \oplus
	\tau_{2\upsilon_1+\upsilon_3} 
,\\ \notag
\tau_7 & \simeq 
	\tau_{\upsilon_4} \oplus 
	\tau_{\upsilon_1+\upsilon_4} \oplus 
	\tau_{\upsilon_2+\upsilon_4}\oplus
	\tau_{\upsilon_3+\upsilon_4} \oplus
	\tau_{2\upsilon_1+\upsilon_4} \oplus
	\tau_{\upsilon_1+\upsilon_2+\upsilon_4} \oplus
	\tau_{\upsilon_1+\upsilon_3+\upsilon_4} \oplus
	\tau_{3\upsilon_1+\upsilon_4} 
,\\ \notag
\tau_8 & \simeq 
	\tau_{0} \oplus 
	\tau_{\upsilon_1} \oplus
	\tau_{\upsilon_3} \oplus
	\tau_{2\upsilon_4} \oplus
	\tau_{2\upsilon_1} \oplus
	\tau_{\upsilon_1+\upsilon_3} \oplus 	
	\tau_{\upsilon_1+2\upsilon_4} \oplus 
	\tau_{2\upsilon_2} \oplus
	\tau_{\upsilon_2+\upsilon_3} 
\\ \notag &\quad  \oplus 
	\tau_{2\upsilon_3} \oplus
	\tau_{3\upsilon_1} \oplus
	\tau_{2\upsilon_1+\upsilon_3} \oplus
	\tau_{2\upsilon_1+2\upsilon_4} \oplus
	\tau_{4\upsilon_1} 
.
\end{align}
We especially note that the last four constituents for $\tau_7$ are missing in \cite[Table 1]{Mashimo97}. 
This omission affected his calculations of the spectrum on $7$-forms (see Remark~\ref{rem8:errorMashimo}). 
Furthermore, \cite[Tables 1--28]{Mashimo06} give $\PP_{\tau}$ for each irreducible constituent $\tau$ of some $\tau_p$ such that 
\begin{equation}\label{eq8:F4unionstrings}
\widehat G_{\tau} = \bigcup_{\Lambda_0\in \PP_{\tau}} \{\pi_{k\varepsilon_1+ \Lambda_0}:k\geq0\}.
\end{equation}
Thus, the direction of all strings is $\dir := \omega_1=\varepsilon_1$. 
Then, \cite[Table 2]{Mashimo97} for $0\leq p\leq 5$ and  \cite[Tables 29--31]{Mashimo06} for $6\leq p\leq 8$, writes $\widehat G_{\tau_p}$ as a finite union of strings of representations, with the corresponding eigenvalues and multiplicity for each element in each string. 
Such information plus the dimension of the irreducible representations in the strings, fully describe the spectrum of the Hodge--Laplace operator on $p$-forms on $P^2(\mathbb O)$. 

With the  information above we can prove the validity of the strong representation-spectral converse {for several values of $p$}.

\begin{theorem}\label{thm8:F4p-iso}
The strong representation-spectral converse is valid for $(\op{F}_4, \Spin(9) ,\tau_{p})$ for {each} $p\in \{0,1,2,3,4,6,10,12,13,14,15,16\}$. 
\end{theorem}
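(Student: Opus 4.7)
The plan is to apply Theorem~\ref{thm3:tauiso=>tauequiv} with direction $\dir = \varepsilon_1$, exploiting the detailed description of $\widehat G_{\tau_p}$ supplied by Mashimo's tables \cite{Mashimo97,Mashimo06} together with the constituent decomposition \eqref{eq8:F4tau_p}. First I would reduce to the range $p \in \{0,1,2,3,4,6\}$: since $\tau_p \simeq \tau_{16-p}$ as $K$-representations, the cases $p \in \{10,12,13,14,15,16\}$ follow from the cases $p \in \{6,4,3,2,1,0\}$ via Remark~\ref{rem2:tau-reducible}. The case $p=0$ is immediate because $\widehat G_{1_K} = \String{\varepsilon_1, 0}$ is a single string. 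For $p=1$ we have the irreducible $\tau_{\upsilon_4}$, and a direct inspection of $\PP_{\tau_{\upsilon_4}}$ verifies the hypotheses of Theorem~\ref{thm3:tauiso=>tauequiv}.

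For $p \in \{2,3,4,6\}$, I would assemble $\PP_{\tau_p} = \bigcup_{\mu} \PP_{\tau_\mu}$ where the union runs over the highest weights $\mu$ of the irreducible constituents listed in \eqref{eq8:F4tau_p}, reading each $\PP_{\tau_\mu}$ off Mashimo's tables so that
\begin{equation}
\widehat G_{\tau_p} = \bigcup_{\Lambda_0 \in \PP_{\tau_p}} \String{\varepsilon_1, \Lambda_0}
\end{equation}
is a finite disjoint union of strings, establishing \eqref{eq3:hatG_tau-finito}. Remark~\ref{rem2:tau-reducible} reduces the representation-spectral converse for $\tau_p$ to handling all constituents simultaneously, which is precisely what pooling the bases of $\PP_{\tau_\mu}$ accomplishes.

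A crucial simplification is that every irreducible representation of $\op{F}_4$ is self-dual, so the exception \eqref{eq3:dual} in Theorem~\ref{thm3:tauiso=>tauequiv} forces $\Lambda_0' = \Lambda_0 + h\dir$, which would mean the corresponding strings are not disjoint unless $h=0$ and $\Lambda_0 = \Lambda_0'$. Therefore it suffices to show, for every pair of distinct base weights $\Lambda_0 \neq \Lambda_0'$ in $\PP_{\tau_p}$, that neither (i) nor (ii) of Proposition~\ref{prop3:stringeigenvalues} holds. For each pair this reduces to a short computation using $\dir = \varepsilon_1$: compute $\langle \varepsilon_1, \Lambda_0 - \Lambda_0'\rangle$ (just the difference of first coordinates), and either verify it is not an integer multiple of $\langle\dir,\dir\rangle = 1$, or verify that the second equality in \eqref{eq3:condrara} fails, or (when $\langle \varepsilon_1, \Lambda_0 - \Lambda_0'\rangle = 0$) compare the Casimir eigenvalues via \eqref{eq3:lambda(C,pi)}. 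Theorem~\ref{thm8:F4-infinity} already encapsulates the verification whenever both weights come from a single $\tau_{b\upsilon_1}$ constituent, and its proof supplies the template for the other pairs.

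The main obstacle is the combinatorial bookkeeping for $p=6$, where eight irreducible constituents contribute and $\PP_{\tau_6}$ is the largest set to examine; checking all pairs is tedious but mechanical. The reason the method fails precisely at $p \in \{5,7,8,9,11\}$ (and their complements) is that for these values a genuine coincidence occurs: two distinct base weights in $\PP_{\tau_p}$ satisfy condition (i) or (ii) of Proposition~\ref{prop3:stringeigenvalues}, producing infinitely many non-tame eigenvalues that cannot be handled by the present techniques. Once the pairwise verification is carried out for $p \in \{2,3,4,6\}$, Theorem~\ref{thm3:tauiso=>tauequiv} applies and yields the strong representation-spectral converse, completing the proof.
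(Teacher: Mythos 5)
Your proposal is correct and follows essentially the same route as the paper: reduce to $0\le p\le 8$ via $\tau_p\simeq\tau_{16-p}$, read the finite string decomposition of $\widehat G_{\tau_p}$ off Mashimo's tables, and verify pairwise via Lemma~\ref{lem3:coincidences} that conditions (i)--(ii) of Proposition~\ref{prop3:stringeigenvalues} fail so that Theorem~\ref{thm3:tauiso=>tauequiv} applies. Your observation that self-duality of all $\op{F}_4$-representations rules out the exception \eqref{eq3:dual} for distinct disjoint strings is a useful explicit justification of a step the paper leaves implicit.
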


\begin{proof}
The assertion follows by checking that, for each pair $\{\Lambda_0,\Lambda_0'\}$ in $\PP_{\tau_p}$ or $\PP_{\tau}$, the polynomials $\lambda(C,\pi_{k\dir+\Lambda_0})$ and $\lambda(C,\pi_{k\dir+\Lambda_0'})$ have only finitely many coincidences.
These polynomials are explicitly given in \cite[Table 2]{Mashimo97} for $0\leq p\leq 5$ and in \cite[Table 29--30]{Mashimo06} for $6\leq p\leq 7$. 
In this verification, each case  follows immediately from  Lemma~\ref{lem3:coincidences} and therefore is left to the reader. 
\end{proof}

The next remark shows why the cases of $p=5,7,8,9,11$ for $(\op{F}_4, \Spin(9) ,\tau_{p})$ were omitted in Theorem~\ref{thm8:F4p-iso}.

\begin{remark}\label{ex8:F4unknowncases}
It is a simple matter to check that the polynomials $\lambda(C,\pi_{k\dir+3\omega_1})= k^2+17k+66$ and $\lambda(C,\pi_{k\dir+2\omega_2})= k^2+19k+84$ have infinitely many coincidences, by using Lemma~\ref{lem3:coincidences}. 
Also, from Tables~15 and 24 in \cite{Mashimo06}, we see that $3\omega_1\in \PP_{\tau_{3\upsilon_4}}$ and $2\omega_2 \in \PP_{\tau_{\upsilon_1+\upsilon_2+\upsilon_4}}$ and consequently $3\omega_1$ and $2\omega_2$ are in $\PP_{\tau_5}$.
We conclude that the hypotheses in Theorem~\ref{thm3:tauiso=>tauequiv} are not satisfied for 
$\tau_{5}$.

Similarly, $\lambda(C,\pi_{k\dir+2\omega_1+2\omega_3})= k^2+21k+110$ and $\lambda(C,\pi_{k\dir+4\omega_3})= k^2+23k+132$ have infinitely many coincidences. 
Since $2\omega_1+2\omega_3\in \PP_{\tau_{\upsilon_1+ \upsilon_3+\upsilon_4}} \cap \PP_{\tau_{4\upsilon_1}}$ and $4\omega_3\in \PP_{\tau_{3\upsilon_1+\upsilon_4}}\cap  \PP_{\tau_{4\upsilon_1}}$, then both are in $\PP_{\tau_7}$ and $\PP_{\tau_8}$.
\end{remark}

\begin{remark}
Theorem~\ref{thm8:F4p-iso} immediately implies that the representation-spectral converse is valid for $(G,K,\tau)$ for every irreducible representation $\tau$ occurring in the decomposition of $\tau_p$ for some $p\in\{0,1,2,3,4,6\}$ (see Remark~\ref{rem2:tau-reducible}).
We point out  that also, in the cases of $p=5$ and $p=7$, the strong representation-spectral converse is still valid \emph {for any irreducible constituent of $\tau_5$ and $\tau_7$} because the strings having infinitely many coincidences in their Casimir eigenvalues (i.e.\ $\String{3\omega_1,\varepsilon_1}$ and $\String{2\omega_2, \varepsilon_1}$ for $p=5$ and  $\String{2\omega_1+2\omega_3,\varepsilon_1}$ and $\String{4\omega_3, \varepsilon_1}$ for $p=7$) are not in $\widehat G_\tau$ for a common {irreducible} $\tau\in\widehat K$ occurring in $\tau_5$ or $\tau_7$. 

The situation is different for $p=8$ since $2\omega_1+2\omega_3$ and $4\omega_3$ lie simultaneously in $\widehat G_{\tau_{4\upsilon_1}}$, hence for this constituent of $\tau_8$ we cannot make such a positive assertion. 
\end{remark}

\begin{remark}\label{rem8:errorMashimo}
Table 30 in \cite{Mashimo06} is not complete since it does not include the strings of representations corresponding to $\tau_{2\upsilon_1+\upsilon_4}$, $\tau_{\upsilon_1+\upsilon_2+\upsilon_4}$, $\tau_{\upsilon_1+\upsilon_3+\upsilon_4}$ and $\tau_{3\upsilon_1+\upsilon_4}$ (the last four irreducible constituents {of} $\tau_7$ in \eqref{eq8:F4tau_p} forgotten in \cite[Table 1]{Mashimo97}). 
Table~\ref{tableMashimo} substitutes \cite[Table 30]{Mashimo06},  using exactly the same notation as  in that article.

\begin{table}
\caption{Spectra of Laplacian $\Delta^7$ on $P^2(\mathbf{Ca})$} 
\label{tableMashimo}
$
\begin{array}{|l||c|c|c|c|c|l|}
\hline
&\multicolumn{6}{c|}{\rule{0pt}{16pt}\text{multiplicity of } V^{F_4}(\lambda+n\lambda_4)} \\ \cline{2-7}
\multicolumn{1}{|c||}{\rule{0pt}{15pt}\text{eigenvalue}}
&n=0 &n=1 &n=2 &n=3 &n\geq4 & \multicolumn{1}{c|}{\lambda} \\  \hline
\rule{0pt}{14pt}
n^2+11n   & 0&1& 2& 3&4 &										0\\ 
n^2+14n+24& 2&6&10&11&\leftarrow &								\lambda_3 \\ 
n^2+17n+54& 6&10&11&\leftarrow &\leftarrow &					2\lambda_3\\ 
n^2+20n+90& 4&5&\leftarrow &\leftarrow &\leftarrow & 			3\lambda_3 \\ 
n^2+23n+132&1&\leftarrow &\leftarrow &\leftarrow &\leftarrow & 	4\lambda_3 \\ 
n^2+15n+36& 2&7&10&\leftarrow &\leftarrow & 					\lambda_2 \\ 
n^2+18n+68& 7&10&\leftarrow &\leftarrow &\leftarrow &			\lambda_2+\lambda_3 \\  
n^2+21n+106& 3&\leftarrow &\leftarrow &\leftarrow &\leftarrow & \lambda_2+2\lambda_3 \\ 
n^2+19n+84& 2&\leftarrow &\leftarrow &\leftarrow &\leftarrow & 	2\lambda_2 \\ 
n^2+13n+18& 1&3&6&8&\leftarrow &								\lambda_1 \\ 
n^2+16n+46& 5&11&13&\leftarrow &\leftarrow & 					\lambda_1+\lambda_3 \\ 
n^2+19n+80& 6&8&\leftarrow &\leftarrow &\leftarrow & 			\lambda_1+2\lambda_3 \\
n^2+22n+120& 2&\leftarrow &\leftarrow &\leftarrow &\leftarrow &	\lambda_1+3\lambda_3 \\ 
n^2+17n+60& 4&7&\leftarrow &\leftarrow &\leftarrow & 			\lambda_1+\lambda_2 \\ 
n^2+20n+96& 3&\leftarrow &\leftarrow &\leftarrow &\leftarrow & 	\lambda_1+\lambda_2+\lambda_3 \\ 
n^2+15n+40& 1&3&4&\leftarrow &\leftarrow & 						2\lambda_1 \\ 
n^2+18n+72& 3&4&\leftarrow &\leftarrow &\leftarrow & 			2\lambda_1+\lambda_3 \\ 
n^2+21n+110&1&\leftarrow &\leftarrow &\leftarrow &\leftarrow & 	2\lambda_1+2\lambda_3 \\ 
n^2+19n+88& 1&\leftarrow &\leftarrow &\leftarrow &\leftarrow & 	2\lambda_1+\lambda_2 \\ \hline
\end{array}
$
\end{table}
\end{remark}

\bibliographystyle{plain}

\begin{thebibliography}{LMR15}
\bibitem[BH19]{ShamsHunsicker17}
	{\sc N.S. Bari, E. Hunsicker}.
	{\it Isospectrality for orbifolds lens spaces.}
	Canad. J. Math. \textbf{72}:2 (2020), 281--325.
	DOI: \href{https://doi.org/10.4153/S0008414X19000178} {10.4153/S0008414X19000178}.


\bibitem[BR11]{BhagwatRajan11}
	{\sc C. Bhagwat, C.S. Rajan}.
	{\it On a spectral analog of the strong multiplicity one theorem.}
	Int. Math. Res. Not. IMRN \textbf{2011}:18 (2011), 4059--4073.
	DOI: \href{http://dx.doi.org/10.1093/imrn/rnq243}{10.1093/imrn/rnq243}.

\bibitem[BD]{BrockerDieck}
	{\sc T. Br\"ocker, T. tom Dieck}.
	{Representations of compact Lie groups.}
	{\it Grad. Texts in Math.} \textbf{98}.
	Springer-Verlag, New York Inc, 1985.
	DOI: \href{http://dx.doi.org/10.1007/978-3-662-12918-0} {10.1007/978-3-662-12918-0}.

\bibitem[Ca05a]{Camporesi05JFA}
	{\sc R. Camporesi}.
	{\it The {H}elgason {F}ourier transform for homogeneous vector bundles over compact {R}iemannian symmetric spaces---the local theory}.
	J. Funct. Anal. \textbf{220}:1 (2005), 97--117. 
	DOI: \href{http://dx.doi.org/10.1016/j.jfa.2004.08.013} {10.1016/j.jfa.2004.08.013}.
	
\bibitem[Ca05b]{Camporesi05Pacific}
	{\sc R. Camporesi}.
	{\it A generalization of the Cartan--Helgason theorem for Riemannian symmetryc spaces of rank one.}
	Pacific J. Math. \textbf{222}:1 (2005), 1--27. 
	DOI: \href{http://dx.doi.org/10.2140/pjm.2005.222.1} {10.2140/pjm.2005.222.1}.

\bibitem[El12]{Chami12}
	{\sc F. {El Chami}}.
	{\it A branching law from {${\rm Sp}(n)$} to {${\rm Sp}(q)\times {\rm Sp}(n-q)$} and an application to {L}aplace operator spectra}.
	Indian J. Pure Appl. Math. \textbf{43}:1 (2012), 71--86. 
	DOI: \href{http://dx.doi.org/10.1007/s13226-012-0005-4} {10.1007/s13226-012-0005-4}.

\bibitem[DG89]{DeTurckGordon89}
{\sc D. DeTurck, C. Gordon}.
{\it Isospectral deformations II: Trace formulas, metrics, and potentials}.
Comm Pure Appl. Math. \textbf{42}:8 (1989), 1067--1095.
DOI: \href{http://dx.doi.org/10.1002/cpa.3160420803} {10.1002/cpa.3160420803}.

\bibitem[GW]{GoodmanWallach-bookSpringer}
	{\sc R. Goodman, N. Wallach}.
	Symmetry, Representations, and Invariants.
	{\it Grad. Texts in Math.} \textbf{255}.
	Springer-Verlag New York, 2009.
	DOI: \href{http://dx.doi.org/10.1007/978-0-387-79852-3} {10.1007/978-0-387-79852-3}.

\bibitem[GM06]{GornetMcGowan06}
	{\sc R. Gornet, J. McGowan}.
	{\it Lens Spaces, isospectral on forms but not on functions}.
	LMS J. Comput. Math. \textbf{9} (2006), 270--286.
	DOI: \href{http://dx.doi.org/10.1112/S1461157000001273} {10.1112/S1461157000001273}.

\bibitem[FH]{FultonHarris-book}
	{\sc W. Fulton, J. Harris}.
	Representation Theory, A first course.
	Springer-Verlag New York, 2004.
	DOI: \href{http://dx.doi.org/10.1007/978-1-4612-0979-9} {10.1007/978-1-4612-0979-9}.

\bibitem[Ha07]{Halima07}
	{\sc M.B. Halima}.
	{\it Branching rules for unitary groups and spectra of invariant differential operators on complex Grassmannians}.
	J. Algebra \textbf{318}:2 (2007), 520--552.
	DOI: \href{http://dx.doi.org/10.1016/j.jalgebra.2007.08.010} {10.1016/j.jalgebra.2007.08.010}.

\bibitem[HvP16]{HeckmanPruijssen16}
	{\sc G. Heckman, M. van Pruijssen}.
	{\it Matrix valued orthogonal polynomials for {G}elfand pairs of rank one}.
	Tohoku Math. J. (2) \textbf{68}:3 (2016), 407--437.
	DOI: \href{http://dx.doi.org/10.2748/tmj/1474652266} {10.2748/tmj/1474652266}.

\bibitem[Ik80]{Ikeda80_3-dimI}
	{\sc A. Ikeda}.
	{\it On the spectrum of a riemannian manifold of positive constant curvature.}
	Osaka J. Math. \textbf{17} (1980), 75--93.
	DOI: \href{http://dx.doi.org/10.18910/5171} {10.18910/5171}
	
	
\bibitem[IT78]{IkedaTaniguchi78}
	{\sc A. Ikeda, Y. Taniguchi}.
	{\it Spectra and eigenforms of the Laplacian on $S^n$ and $P^n(\mathbb C)$}.
	Osaka J. Math. \textbf{15}:3 (1978), 515--546.
	DOI: \href{http://dx.doi.org/10.18910/6956} {10.18910/6956}
	
	
\bibitem[Ke14]{Kelmer14}
	{\sc D. Kelmer}.
	{\it A refinement of strong multiplicity one for spectra of hyperbolic manifolds.}
	Trans. Amer. Math. Soc. \textbf{366} (2014), 5925--5961.
	DOI: \href{http://dx.doi.org/10.1090/S0002-9947-2014-06102-3} {10.1090/S0002-9947-2014-06102-3}.

\bibitem[Kn]{Knapp-book-beyond}
	{\sc A.W. Knapp}.
	{Lie groups beyond an introduction.}
	{\it Progr. Math.} \textbf{140}.
	Birkh\"auser Boston Inc., 2002.

\bibitem[La14]{Lauret-strongequivflat}
	{\sc E.A. Lauret}.
	{\it Representation equivalent Bieberbach groups and strongly isospectral flat manifolds}.
	Canad. Math. Bull. \textbf{57} (2014), 357--363.
	DOI: \href{http://dx.doi.org/10.4153/CMB-2013-013-2} {10.4153/CMB-2013-013-2}.
	
\bibitem[LM20]{LM-strongmultonethm}
	{\sc E.A. Lauret, R.J. Miatello}.
	{\it Strong multiplicity one theorems for locally homogeneous spaces of compact type.}
	Proc. Amer. Math. Soc. \textbf{148}:7 (2020), 3163--3173. 
	DOI: \href{http://dx.doi.org/10.1090/proc/14980} {10.1090/proc/14980}.
	
\bibitem[LMR15]{LMR-repequiv}
	{\sc E.A. Lauret, R.J. Miatello, J.P. Rossetti}.
	{\it Representation equivalence and p-spectrum of constant curvature space forms}.
	J. Geom. Anal. \textbf{25}:1 (2015), 564--591.
	DOI: \href{http://dx.doi.org/10.1007/s12220-013-9439-0} {10.1007/s12220-013-9439-0}.

\bibitem[LMR21]{LMR-SaoPaulo}
	{\sc E.A. Lauret, R.J. Miatello, J.P. Rossetti}.
	{\it Recent results on the spectra of lens spaces.}
	S\~ao Paulo J. Math. Sci. \textbf{15}:1 (2021), 240--267.
	DOI: \href{http://dx.doi.org/10.1007/s40863-019-00154-3} {10.1007/s40863-019-00154-3}.

\bibitem[LR17]{LR-fundstring}
	{\sc E.A. Lauret, F. Rossi Bertone}.
	{\it Multiplicity formulas for fundamental strings of representations of classical Lie algebras}.
	J. Math. Phys. \textbf{58} (2017), 111703. 
	DOI: \href{http://dx.doi.org/10.1063/1.4993851} {10.1063/1.4993851}.

\bibitem[Le71]{Lepowsky71}
	{\sc J. Lepowsky}.
	{\it Multiplicity formulas for certain semisimple {L}ie groups.}
	Bull. Amer. Math. Soc. \textbf{77} (1971), 601--605.
	DOI: \href{http://dx.doi.org/10.1090/S0002-9904-1971-12767-2} {10.1090/S0002-9904-1971-12767-2}.

\bibitem[Ma97]{Mashimo97}
	{\sc K. Mashimo}.
	{\it Spectra of the {L}aplacian on the {C}ayley projective plane.}
	Tsukuba J. Math. \textbf{21}:2 (1997), 367--396.
	DOI: \href{http://dx.doi.org/10.21099/tkbjm/1496163248} {10.21099/tkbjm/1496163248}.

\bibitem[Ma06]{Mashimo06}
	{\sc K. Mashimo}.
	{\it On the branching theorem of the pair {$(F_4,{\rm Spin}(9))$}.}
	Tsukuba J. Math. \textbf{30}:1 (2006), 31--47.
	DOI: \href{http://dx.doi.org/10.21099/tkbjm/1496165027} {10.21099/tkbjm/1496165027}.

\bibitem[Pe95]{Pesce95}
	{\sc H. Pesce}.
	{\it Vari\'et\'es hyperboliques et elliptiques fortement isospectrales.}
	J. Funct. Anal. \textbf{134}:2 (1995), 363--391.
	DOI: \href{http://dx.doi.org/10.1006/jfan.1995.1150} {10.1006/jfan.1995.1150}.

\bibitem[Pe96]{Pesce96}
	{\sc H. Pesce}.
	{\it Repr\'esentations relativement \'equivalentes et vari\'et\'es riemanniennes isospectrales.}
	Comment.\ Math.\ Helvetici \textbf{71} (1996), 243--268.
	DOI: \href{http://dx.doi.org/10.1007/BF02566419} {10.1007/BF02566419}.

\bibitem[SY]{SchoenYau-book}
	{\sc R. {Schoen} and S.-T. {Yau}}.
	{Lectures on differential geometry.}
	Cambridge, MA: International Press, 1994.

\bibitem[Su85]{Sunada85}
	{\sc T. Sunada}.
	{\it Riemannian coverings and isospectral manifolds.}
	Ann. of Math. (2) \textbf{121}:1 (1985), 169--186.
	DOI: \href{http://dx.doi.org/10.2307/1971195} {10.2307/1971195}.

\bibitem[Ta]{Takeuchi}
{\sc M. Takeuchi}.
Modern spherical functions. (Transl. from the Japanese by Toshinobu Nagura.)
{\it Transl. Math. Monogr.} \textbf{135}.
Amer. Math. Soc., Providence, 1994.

\bibitem[Ts81]{Tsukamoto81}
	{\sc C. Tsukamoto}.
	{\it Spectra of Laplace-Beltrami operators on $\textrm{SO}(n+2)/\textrm{SO}(2)\times\textrm{SO}(n)$ and $\textrm{Sp}(n+1)/\textrm{Sp}(1)\times\textrm{Sp}(n)$}.
	Osaka J. Math. \textbf{18}:2 (1981), 407--426.
    DOI: \href{http://dx.doi.org/10.18910/8349} {10.18910/8349}.
    
\bibitem[Wa]{Wallach-book}
	{\sc N. Wallach}.
	{Harmonic analysis on homogeneous spaces}.
	{\it Pure and Applied Mathematics} \textbf{19}.
	Marcel Dekker, Inc., New York, 1973.

\bibitem[Wo]{Wolf-book}
	{\sc J. Wolf}.
	Spaces of constant curvature. 6th ed.
	Providence, RI: AMS Chelsea Publishing, 2011.
		

\end{thebibliography}

\end{document}